\newtheorem{lemma}{Lemma}
\newtheorem{remark}{Remark}
\newtheorem{theorem}{Theorem}
\newtheorem{prop}{Proposition}
\newtheorem{definition}{Definition}
\newtheorem{corollary}{Corollary}
\title{ Global Attractor for the Periodic Generalized Korteweg-De Vries Equation Through Smoothing}
\author{Ryan McConnell}
\address{Department of Mathematics, University of Illinois, Urbana, IL 61801, USA}
\subjclass[2020]{Primary: 35Q53, 35B41; Secondary: 35B65}
\begin{document}
\begin{abstract}
   We establish a smoothing result for the generalized KdV (gKdV) on the torus with polynomial non-linearity, damping, and forcing that matches the smoothing level for the gKdV at $H^1$. As a consequence, we establish the existence of a global attractor for this equation as well as its compactness in $H^s(\mathbb{T})$, $s\in (1,2).$
\end{abstract}
\maketitle
\section{Introduction}
This paper is concerned with the g-KdV of the form 
\begin{equation}\label{gKdV}
\begin{cases}
    u_t + u_{xxx} + (g(u))_x = 0,\,\,\, x\in\mathbb{T},\,\,t\in\mathbb{R}\\
    u(x,0) = u_0\in H^s,
\end{cases}
\end{equation}
where we will be interested in, specifically, the case when $u$ is real and $g(u) = P(u)$ for some real polynomial. For $\mathbb{K} := \mathbb{T}$ or $\mathbb{R}$, it's known that
\begin{equation*}
    \int_\mathbb{K} u\, dx,
\end{equation*}
\begin{equation*}
    \int_\mathbb{K} |u|^2\,dx,
\end{equation*}
and
\begin{equation*}
    E(u) = \frac{1}{2}\|u_x\|^2_2 - \int_\mathbb{K} G(u) \,dx,
\end{equation*}
where $G(0) = 0$ is a primitive of $g$ are all conserved quantities of \eqref{gKdV}.

The local and global well-posedness of \eqref{gKdV} has been extensively studied over the last 25 or so years. In particular, integrability methods which are applicable for the KdV and mKdV via the Miura transformation defined by 
\[
\mathbf{M}v = -6(v_x+v^2),
\]
which takes solutions of the mKdV equation to those of the KdV equation,
give global well-posedness for the KdV in $H^{-1}$ as well as for the mKdV in $H^0 = L^2$ \cite{killip2019kdv}. Additionally, work done at the turn of the century by Bourgain, Colliander, Keel, Staffilani, Takaoka, and Tao using fixed point arguments in Bourgain spaces and, more specifically, the I-method, (see: \cite{bourgain1993fourier, colliander2001global, colliander2003sharp, colliander2004multilinear, colliander2007resonant}), a general method for establishing global well-posedness below the energy level, established that the KdV was globally well-posed on $\mathbb{T}$ for $s \geq -1/2$ and on $\mathbb{R}$ for $s > -3/4$. Similarly, they showed the analogous result for the mKdV with $s \geq 1/2$ on $\mathbb{T}$ and $s > 1/4$ on $\mathbb{R}$.

It's known (see: \cite{colliander2003sharp, farah2010supercritical, oh2020smoothing} and sources within) that, for small data and polynomial non-linearity $g(x) = u^{k+1}$, (\ref{gKdV}) is globally well-posed below the $H^1$ level on $\mathbb{T}$. Additionally, it's similarly known that in the defocusing case (negative sign in front of the derivative non-linearity) with even $k$ we again recover global well-posedness regardless of the size of the initial data. In particular, they establish global existence results for the supercritical defocusing gKdV  in $H^s(\mathbb{T})$ for $s > \frac{13}{14} - \frac{2}{7k}$. On the real line it's been shown (\cite{farah2010supercritical} and sources within) we have global existence in the supercritical case for $s > \frac{k-4}{2k}$ given small initial data.

Consider now the dissipative gKdV on $\mathbb{K} = \mathbb{R}$ or $\mathbb{T}$ given by 
\begin{equation}\label{AttractorPDEoNR}
    \begin{cases}
    u_t + u_{xxx} +\gamma u+ (g(u))_x = f(x) ,\,\,\, x\in\mathbb{\mathbb{K}},\,\,t\in\mathbb{R}\\
    u(x,0) = u_0\in {H^s},
\end{cases}
\end{equation}
with $\gamma > 0$ and $f\in H^s$ time independent. When $\mathbb{K} = \mathbb{R}$, it's been shown by Dlotko, et al. \cite{DLOTKO20093934} that \eqref{AttractorPDEoNR} with forcing in ${H}^1$ and $g\in C^{2}$ with Lipschitz second derivative as well as additional growth assumptions at $0$ and infinity possesses a global attractor in $H^1$. Similarly, Wang et al. \cite{wang2015global} established the existence of a global attractor in ${H}^2$ for $g(x) = x^4/4$ given $f\in {H}^2$ time independent, with a refinement by Wang \cite{wang2012long} establishing the existence in ${H}^s$ given $f\in L^2\cap {H}^{s-3}.$ 

We now restrict ourselves to the case that $\mathbb{K} = \mathbb{T}$, where $\dot{H}^s$ is the space of mean-zero $f$ in $H^s$, $\gamma > 0$, and $f\in \dot{H}^1$ time independent. Goubet \cite{goubet2000asymptotic} showed that \eqref{AttractorPDEoNR} in $\dot{L}^2$ has a global attractor in $\dot{L}^2$ that is compact in $H^3$. Tsugawa  \cite{tsugawa2004existence} then applied the I-method to establish the existence of a global attractor for \eqref{AttractorPDEoNR} in $\dot{H}^s$ with $s > -3/8$ and forcing $f\in \dot{L}^2$, as well as showing that the attractor is compact in $H^3$. Following the techniques used in refining the I-method (adding correction terms to the modified energy functional), Yang \cite{yang2011global} was able to improve the $s$ bound to $s \geq -1/2$, coinciding with the local existence theory for the KdV. Later, Erdo\u{g}an and Tzirakis showed in \cite{erdogan2011long} using smoothing that \eqref{AttractorPDEoNR} has a global attractor in $\dot{L}^2$ that is compact in $H^s$, $s\in (0,1)$, with forcing in $\dot{L}^2.$ Moving on to the mKdV, Goyal \cite{goyal2018global} proved an analogous statement to those of Tsugawa and Yang by proving that the forced and damped mKdV possesses a global attractor in $\dot{H}^s$ for $1 > s > 11/12$ with forcing $f\in\dot{L}^2\cap H^1$.

Following ideas presented in \cite{erdougan2016dispersive}, we seek to prove the following.

\begin{theorem}\label{Theorem3}
 Let $g, H, Q$ be polynomials with $g(0) = 0$ and
 \[
 g(x) = H(x) + Q(x),
 \] where $\deg H\leq 4$, $Q\equiv 0$ or $\deg Q \geq 5$ with $n$ odd with negative leading coefficient \footnote{That is, $g(x) = \sum_{k=0}^4 a_kx^k + \sum_{k=5}^n a_k x^k$ where $n \geq 5$ is either odd with $a_n < 0$, or $a_5=\cdots=a_n=0$.}.  Then (\ref{AttractorPDEoNR}) with $\mathbb{K} = \mathbb{T}$, $\gamma > 0$, and $f\in\dot{H^1(\mathbb{T})}$ is globally well-posed and the data-to-solution map, $S_t$, has a global attractor  $A\subset \dot{H^1}$ that is compact in $H^s$ for $s\in(1,2).$
\end{theorem}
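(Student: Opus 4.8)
The plan is to run the now-standard ``smoothing implies attractor'' scheme (in the spirit of \cite{erdogan2011long, erdougan2016dispersive}) in four stages: a local theory at the $H^1$ level, an a priori dissipation estimate giving a bounded absorbing set, a nonlinear smoothing estimate that gains almost a full derivative, and finally the abstract theory of dissipative semigroups, which then produces the attractor together with its extra regularity. First I would set up the Duhamel formulation
\[
u(t) = e^{-\gamma t}W(t)u_0 + \int_0^t e^{-\gamma(t-t')}W(t-t')\big(f - (g(u))_x\big)(t')\,dt',\qquad W(t)=e^{-t\partial_x^3},
\]
and solve it by contraction in a time-restricted Bourgain space $X^{1,b}$ with $b=\tfrac12+$. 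Since $g$ is a polynomial of degree $n$, the only inputs required are the multilinear estimates $\|\partial_x(u_1\cdots u_m)\|_{X^{1,b-1}}\lesssim \prod_{j=1}^m\|u_j\|_{X^{1,b}}$ for $2\le m\le n$, which on $\mathbb{T}$ follow from the known gKdV bilinear/multilinear estimates together with the algebra property of $H^1$ and the embedding $H^1(\mathbb{T})\hookrightarrow L^\infty(\mathbb{T})$; the damping factor $e^{-\gamma(\cdot)}$ and the fixed forcing $f\in\dot H^1$ are harmless perturbations. This yields a unique solution on a time interval depending only on $\|u_0\|_{H^1}$, continuous dependence on $u_0$, and preservation of mean-zero (integrate the equation, using $\int_{\mathbb{T}}f=0$). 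Global existence then reduces to an a priori bound on $\|u(t)\|_{H^1}$, which also upgrades $S_t$ to a continuous semigroup on $\dot H^1$.

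For the a priori bounds, pairing \eqref{AttractorPDEoNR} with $u$ annihilates the dispersive and nonlinear contributions ($\int u\,u_{xxx}=0$ and $\int u_x\,g(u)=\int\partial_x G(u)=0$), leaving $\tfrac{d}{dt}\|u\|_2^2 = -2\gamma\|u\|_2^2 + 2\langle f,u\rangle$, hence by Gronwall an $L^2$ absorbing ball of radius $\sim\gamma^{-1}\|f\|_2$. Differentiating $E(u)=\tfrac12\|u_x\|_2^2-\int G(u)$ along the flow, and using that the conservative part is a perfect $x$-derivative, gives
\[
\tfrac{d}{dt}E(u) = -\gamma\|u_x\|_2^2 + \gamma\!\int_{\mathbb{T}} u\,g(u)\,dx - \!\int_{\mathbb{T}} g(u)\,f\,dx + \!\int_{\mathbb{T}} u_x f_x\,dx;
\]
the last term is absorbed by $\tfrac{\gamma}{4}\|u_x\|_2^2+\gamma^{-1}\|f_x\|_2^2$ (this is where $f\in\dot H^1$ enters), and the polynomial terms are handled by the structural hypothesis: writing $G=G_H+G_Q$ with $\deg G_H\le 5$ and $\deg G_Q=n+1$ \emph{even} with negative leading coefficient, Gagliardo--Nirenberg on $\mathbb{T}$ gives $\|u\|_{L^5}^5\lesssim\|u_x\|_2^{3/2}\|u\|_2^{7/2}$ (exponent $3/2<2$, hence absorbable), while $-\int G_Q(u)$ contributes a coercive $L^{n+1}$ term dominating the lower-degree monomials on the compact torus; the top term $\gamma a_n\int u^{n+1}$ has the favorable sign. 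Thus $E(u)$ is, modulo a function of $\|u\|_2$, comparable to $\|u_x\|_2^2$, and one obtains $\tfrac{d}{dt}E(u)\le -cE(u)+C(\|f\|_{H^1})$ once $\|u\|_2$ has entered its absorbing ball, producing a bounded absorbing set $B_0\subset\dot H^1$ and completing globalization.

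The heart of the matter, and the principal obstacle, is the nonlinear smoothing. Writing $u=e^{-\gamma t}W(t)u_0+z$, I would show that the Duhamel remainder $z$ gains almost a full derivative, i.e. $z\in C_tH^{1+s}$ for every $s<1$, with norm bounded by a function of $\sup_{t'}\|u(t')\|_{H^1}$ that does \emph{not} degenerate as $t\to\infty$ (the damping $e^{-\gamma(t-t')}$ making the time integral uniformly convergent). The forcing piece $\int_0^t e^{-\gamma(t-t')}W(t-t')f\,dt'$ gains three derivatives on the Fourier side (division by $\gamma-ik^3$) and is harmless; the work lies in the terms $\int_0^t e^{-\gamma(t-t')}W(t-t')\partial_x(u^m)\,dt'$ for $2\le m\le n$, which I would estimate via the multilinear smoothing bound $\|\partial_x(u^m)\|_{X^{s,b-1}}\lesssim\|u\|_{X^{1,b}}^m$ with $s<1$. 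This is proved by exploiting the resonance function $\Phi_m(k,k_1,\dots,k_m)=k^3-\sum_j k_j^3$ for $k=\sum_j k_j$: on the non-resonant set one integrates by parts in time (equivalently, pays with modulation weights), converting the output derivative $|k|$ into a gain; for $m=2$ the factorization $\Phi_2=3k_1k_2k$ combined with the mean-zero condition removes all resonances, and for $3\le m\le n$ the resonant contributions either vanish by the real/mean-zero structure or collapse to genuinely lower-degree (hence better-behaved) multilinear expressions, following the circle of ideas in \cite{erdougan2016dispersive, oh2020smoothing}. The delicate bookkeeping of these higher-order resonances at the exact $H^1$ threshold is the crux; everything else is routine once it is in place.

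Finally, with $S_t$ a continuous semigroup on $\dot H^1$ admitting the bounded absorbing set $B_0$, it remains to check asymptotic compactness. For $t\ge T_0$ (so that $S_{T_0}u_0\in B_0$) decompose $S_tu_0 = e^{-\gamma(t-T_0)}W(t-T_0)\big(S_{T_0}u_0\big) + z(t)$: the first term tends to $0$ in $\dot H^1$ uniformly for $u_0\in B_0$, since $\|e^{-\gamma\tau}W(\tau)v\|_{H^1}=e^{-\gamma\tau}\|v\|_{H^1}$, while by the smoothing step $z(t)$ stays in a fixed bounded ball of $H^{1+s}$, which is compactly embedded in $\dot H^1$. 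Hence $\omega(B_0)$ is compact in $\dot H^1$, and the standard attractor theorem for dissipative semigroups (e.g. Temam) gives the global attractor $A=\omega(B_0)$, compact and invariant in $\dot H^1$. Since this holds for every $s<1$, $A$ is bounded in $H^{1+s}(\mathbb{T})$ for all such $s$; given any $s_0\in(1,2)$, choosing $s<1$ with $1+s>s_0$ and invoking the compact embedding $H^{1+s}(\mathbb{T})\hookrightarrow\hookrightarrow H^{s_0}(\mathbb{T})$ shows $A$ is relatively compact in $H^{s_0}$, and it is closed there because its $\dot H^1$-compactness forces uniqueness of limits. Therefore $A$ is compact in $H^{s_0}(\mathbb{T})$ for every $s_0\in(1,2)$, which is the assertion of the theorem.
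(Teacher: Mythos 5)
Your overall architecture (local theory at $H^1$, absorbing set via the energy functional, nonlinear smoothing, asymptotic compactness plus Rellich) matches the paper's, and your energy/absorbing-set computation and your final attractor argument are essentially the paper's. The genuine gap is in the two analytic steps you label as routine: the multilinear estimate $\|\partial_x(u^m)\|_{X^{1,b-1}}\lesssim\prod\|u_j\|_{X^{1,b}}$ that you invoke both to close the contraction and to prove smoothing is \emph{false} on $\mathbb{T}$ for $m\ge 3$, precisely because of the resonant interactions $k_{j_0}=k$ with $\sum_{j\ne j_0}k_j=0$. These produce the term $ik\,\widehat u_k\int_{\mathbb{T}}u^{m-1}\,dx$, which for $m-1$ even (e.g.\ $m=3$, where $\int_{\mathbb{T}} u^2=\|u\|_2^2>0$) neither vanishes by any real/mean-zero structure nor collapses to anything lower order: it is a first-order transport term on which the resonance function $H_m$ vanishes, so integration by parts in time / modulation weights give no gain whatsoever. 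This is the classical failure of the periodic trilinear estimate for mKdV, and it is exactly why the paper introduces the gauge $L_t[u]$ in \eqref{L definition}, renormalizing to \eqref{modified LWP PDE} in which $\mathcal{R}^1$ is removed; only then do the Bourgain-space estimates (Lemmas \ref{GeneralLemma w Hn}, \ref{GeneralLemma wo Hn}, \ref{Lemma: NONLINEAR TERM IN Y1}) close. Your sentence asserting that the resonant contributions ``vanish or collapse to lower degree'' is where the proof breaks.

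Two downstream consequences. First, once one gauges, the forcing becomes the $u$-dependent $L_t[u]f$, and the difference estimate for $L_t[u]f-L_t[v]f$ costs a full derivative of $f$ (Lemma \ref{forcing Contraction}); the paper circumvents this by running the fixed point on a ball of $X^{1,1/2}$ measured in the weaker $X^{0,1/2}$ metric. Your scheme avoids this issue only because it never gauges. Second, your smoothing claim $u-e^{-\gamma t}W(t)u_0\in H^{1+s}$ is not the statement that can actually be proved: the correct comparison (Proposition \ref{Smoothing}) is between the \emph{gauged} solution $L_t[u]u$ and $W_t^\gamma u_0$, and the discrepancy $(L_t^{-1}[u]-1)W_t^\gamma u_0$ is not smoother than $H^1$. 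This is repairable for asymptotic compactness---take $L_t^{-1}[S_tu_0]W_t^\gamma u_0$ as the decaying part, which has the same decaying $H^1$ norm since $L_t$ is a unimodular multiplier---but your decomposition as written does not have a remainder bounded in $H^{1+s}$.
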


    In order to prove Theorem \ref{Theorem3}, we follow the method in \cite{oh2020smoothing}, in which they proved a local smoothing result for the gKdV and polynomial bounds on higher order Sobolev norms. In particular, we adopt their analysis of the resonant set described in the beginning of Section 3, as well as Lemmas \ref{GeneralLemma w Hn} and \ref{GeneralLemma wo Hn}-- although we have changed the statement to Lemma \ref{GeneralLemma w Hn} and the proofs to both. We overcome obstacles involved in the application of the guage transformation to the forcing term, as employed in \cite{goyal2018global}, at the cost of a much longer local existence proof. We then apply a normal form transformation and perform an analysis on the resulting terms. While we do employ a normal form transformation similar to \cite{oh2020smoothing}, we change the restrictions associated to the transformation. The benefit of this is simplified proofs, at the cost of somewhat nasty subscripts on the summations. In total, however, we are able to match the level of smoothing known for the gKdV at the $H^{1}$ level, and then use this as well as Rellich's theorem to conclude the existence of a global attractor.

\begin{remark}
It's unclear if the proof of Goubet, \cite{goubet2000asymptotic}, would extend to the gauge gKdV that we work with, whereas the result follows readily from smoothing with a modification for local well-posedness.
\end{remark}
    
\begin{remark}
The method we employ to close the contraction for our gauged, forced, and damped equation is of independent interest. It appears that such a method will be useful in handling resonance interactions of the form $k_j = k$ for some internal $k_j$, allowing one to systematically study unconditional well-posedness for the KdV heirarchy. Specifically, it will enable one to handle ``linear'' groups of the form
\[
\partial_tu + \partial_x^{2k+1}u +\alpha_{2k-1}(t, u)\partial_x^{2k-1}u + \cdots +\alpha_{1}(t,u)\partial_x u,
\]
where $\alpha_j(t,u)$ are $x$-independent functions of $t$ and $u$. 
\end{remark}


\section{Preliminaries}
Let $k$ be the dual Fourier variable to $x\in\mathbb{T}$ and $\tau$ be the dual Fourier variable to $t\in\mathbb{R}$. We denote the Fourier transform of a function $u$ on the torus $\mathbb{T}$ by $\widehat{u}(k)$ or $\widehat{u}_{k}$. When $u$ is a spacetime function on $\mathbb{T}\times\mathbb{R}$, we denote both the space-time Fourier transform and the spacial Fourier transform as $\widehat{u}_k$, since confusion is mitigated by including the variables. 

We write $A\lesssim_{\varepsilon} B$ when there is a constant $0 < C(\varepsilon)$ such that $|A|\leq C|B|$; $A\gg_\varepsilon B$ to be the negation of $A\lesssim_\varepsilon B$; and $A\sim_\varepsilon B$ if, in addition, $B\lesssim_\varepsilon A$. 

Define $\langle h\rangle := (1+|h|)$ and $J^s$ to be the Fourier multiplier given by $\langle k\rangle^s$. Let $W_t = e^{-t\partial_x^3}$ be the propagator for the Airy equation and $W_t^\gamma = e^{-t\partial^3_x-\gamma t}$ be the propagator for the damped Airy equation with damping coefficient $\gamma > 0$. We then define the Bourgain space $X^{s,b}$ first introduced in the seminal paper \cite{bourgain1993fourier}, by
\[
    \|u\|_{X^{s,b}} = \|\langle k \rangle^{s}\langle \tau - k^3\rangle^{b}\widehat{u}\|_{L^2_\tau\ell^2_k} = \|\langle k\rangle ^s\langle \tau\rangle^b \widehat{u}(k, \tau + k^3)\|_{\ell^2_k L^2_\tau},
\]
which measures, in a sense, how much $u$ deviates from the free solution $W_tu$. Given $u_0\in H^s$, we see that $\|W_tu_0\|_{X^{s,b}}$ is infinite, so for $T > 0$ we define the restricted Bourgain space as the space of equivalence classes of functions endowed with the norm
\begin{equation*}
\|u\|_{X^{s,b}_T} := \inf\{\|v\|_{X^{s,b}}\,|\,u|_{[0,T ]} = v|_{[0,T]}\},
\end{equation*}
with dual space $X^{-s,-b}.$ Because of this natural pairing, we will often invoke duality and hence it will be natural to define the hyper-plane
\[
\Gamma_n := \{\tau+\tau_1 + \cdots +\tau_n = 0,\,\,k+k_1+\cdots +k_n = 0\},
\]
with obvious inherited measure denoted by $d\Gamma$.
This Bourgain space is almost good enough, but it's known (see: \cite{kenig1996bilinear}) that the bilinear estimate 
\[
\|\partial_x(uv)\|_{X^{s,-b_1}_T}\lesssim \|u\|_{X^{s,b_2}_T}\|v\|_{X^{s,b_2}_T}
\]
fails for any $b_1 > 0$, $b_2\ne \frac{1}{2}$, and $b_1+b_2\leq 1$. However, the $X^{s,1/2}$ norm barely fails to control the $C^0_tH^s_x$ norm, so we have to define the modified Bourgain space $Y^s$ by
\begin{equation}\label{Free Soln in Ys}
\|u\|_{Y^s} := \|u\|_{X^{s,1/2}} + \|\langle k\rangle^s \widehat{u}\|_{\ell^2_k L^1_\tau},
\end{equation}
and the auxiliary space $Z^s$ by
\[
\|u\|_{Z^s}:= \|u\|_{X^{s, -1/2}} + \left\| \frac{\langle k\rangle^s}{\langle \tau - k^3\rangle}\widehat{u}(k, \tau)\right\|_{\ell^2_k L^1_t}.
\]
Analogously to the way we defined restricted norm  $X_T^{s,b}$, we define $Y^s_T$ and $Z^s_T$. It's well known that with these modifications $Y^s\hookrightarrow C^0_tH^s$, for reference see: \cite{erdougan2016dispersive}.

We now record some facts (see: \cite{colliander2003sharp, erdougan2016dispersive}):
\begin{prop}
For any $\chi\in \mathcal{S}_t$ and $f\in C^\infty_x(\mathbb{T})$, $F\in Z^s$,
\begin{align*}
    \|\chi(t)e^{-t\partial_x^3}f\|_{Y^s}&\lesssim \|f\|_{H^s}\\
    \left\|\chi(t)\int_0^te^{-(t-s)\partial_x^3}F(s)\,ds\right\|_{Y^s}&\lesssim \|F(s)\|_{Z^s}.
\end{align*}
\end{prop}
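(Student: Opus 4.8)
The plan is to prove each estimate by computing the relevant space–time Fourier transform explicitly and reading off both components of the $Y^s$ norm; the Duhamel bound is the substantive one and reduces, after a modulation decomposition, to the linear estimate together with the $X^{s,-1/2}$ piece of the $Z^s$ norm. For the linear bound I would first note that the free evolution has $k$-th spatial Fourier coefficient $e^{itk^3}\widehat{f}(k)$, so that $\chi(t)e^{-t\partial_x^3}f$ has space–time Fourier transform $\widehat{\chi}(\tau-k^3)\widehat{f}(k)$. For the $X^{s,1/2}$ component I would substitute $\sigma=\tau-k^3$ to factor the norm as $\|f\|_{H^s}\,\|\langle\sigma\rangle^{1/2}\widehat{\chi}\|_{L^2_\sigma}$, finite since $\chi\in\mathcal S_t$; for the $\ell^2_k L^1_\tau$ component of \eqref{Free Soln in Ys} the same substitution factors it as $\|\widehat{\chi}\|_{L^1}\|f\|_{H^s}$, again finite. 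Summing gives the first estimate.

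For the Duhamel term I would compute that its $k$-th spatial coefficient equals $\int\widehat{F}_k(\tau)\,\frac{e^{it\tau}-e^{itk^3}}{i(\tau-k^3)}\,d\tau$, where $\widehat{F}_k(\tau)$ is the space–time transform of $F$. Writing $a=\tau-k^3$ and fixing a bump $\eta$ equal to $1$ on $|a|\le 1$ with $\eta^c=1-\eta$, I would, after factoring out $e^{itk^3}$, split the kernel as
\[
\frac{e^{ita}-1}{ia}\,\eta(a)\;+\;\frac{e^{ita}}{ia}\,\eta^c(a)\;-\;\frac{1}{ia}\,\eta^c(a),
\]
producing three contributions $u^{(1)},u^{(2)},u^{(3)}$. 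On $\mathrm{supp}\,\eta$ I would Taylor-expand $\frac{e^{ita}-1}{ia}=\sum_{n\ge 1}\frac{(it)^n}{n!}a^{n-1}$, so that $u^{(1)}=\sum_{n\ge1}\frac{i^n}{n!}[\chi(t)t^n]\,e^{-t\partial_x^3}h_n$ with $\widehat{h_n}(k)=\int \widehat{F}_k(\tau)a^{n-1}\eta(a)\,d\tau$. Since $|a|^{n-1}\eta(a)\lesssim\langle a\rangle^{-1}$, each $\|h_n\|_{H^s}$ is controlled by the $\ell^2_k L^1_\tau$ part of $\|F\|_{Z^s}$; applying the linear estimate to each summand (with the Schwartz cutoff $\chi(t)t^n$) and using the $1/n!$ decay against the growth of the Schwartz seminorms of $\chi(t)t^n$ makes the series converge to a bound by $\|F\|_{Z^s}$. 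The stationary piece $u^{(3)}=-\chi(t)e^{-t\partial_x^3}h$, with $\widehat h(k)=\int\widehat{F}_k(\tau)\frac{\eta^c(a)}{ia}\,d\tau$, is again a cutoff free solution whose data is controlled by the same $L^1_\tau$ weight, so the linear estimate closes it.

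The remaining piece $u^{(2)}=\chi(t)\,v$ has $\widehat v_k(\tau)=\frac{\eta^c(a)}{ia}\widehat{F}_k(\tau)$, and here I would invoke the $X^{s,-1/2}$ part of $\|F\|_{Z^s}$ directly: since $|a|\gtrsim 1$ on $\mathrm{supp}\,\eta^c$, one has $\frac{\langle a\rangle}{a^2}\lesssim\langle a\rangle^{-1}$, giving $\|v\|_{X^{s,1/2}}\lesssim\|F\|_{X^{s,-1/2}}$ and $\langle k\rangle^s\|\widehat v_k\|_{L^1_\tau}\lesssim\|\frac{\langle k\rangle^s}{\langle a\rangle}\widehat F_k\|_{L^1_\tau}$. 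To transfer these to $\chi v$ I would use boundedness of multiplication by $\chi$ on both components of $Y^s$: writing $\widehat{\chi v}_k=\widehat\chi *\widehat v_k$ and exploiting $\langle\tau-k^3\rangle^{1/2}\lesssim\langle\tau-\tau'\rangle^{1/2}\langle\tau'-k^3\rangle^{1/2}$, Young's inequality with $\langle\cdot\rangle^{1/2}\widehat\chi\in L^1$ bounds the $X^{s,1/2}$ norm, while $L^1\ast L^1\hookrightarrow L^1$ bounds the $\ell^2_kL^1_\tau$ norm. Assembling $u^{(1)},u^{(2)},u^{(3)}$ then yields the Duhamel estimate.

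The main obstacle is the Duhamel bound, and within it the two technical points above: verifying that the Taylor series for $u^{(1)}$ converges in $Y^s$ (controlling the $1/n!$ against the $n$-dependent constants produced by applying the linear estimate to $\chi(t)t^n$), and establishing that multiplication by the Schwartz cutoff $\chi$ is bounded on $X^{s,1/2}$ and on the $\ell^2_kL^1_\tau$ endpoint — the borderline $b=1/2$ regularity being precisely why these modified spaces are introduced. Everything else reduces mechanically to the linear estimate or to the $X^{s,-1/2}$ control built into the $Z^s$ norm.
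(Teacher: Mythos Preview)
The paper does not prove this proposition; it records it as a known fact with a citation to \cite{colliander2003sharp, erdougan2016dispersive}. Your proposal is correct and is precisely the standard argument one finds in those references (in particular in \cite{erdougan2016dispersive}): the explicit computation of the space--time transform for the free piece, and the near/far modulation decomposition of the Duhamel kernel with the Taylor expansion on $|a|\lesssim 1$, the free-solution piece from the constant term on $|a|\gtrsim 1$, and the oscillatory piece handled by the $X^{s,-1/2}$ component of $Z^s$ together with boundedness of multiplication by $\chi$ on $Y^s$.
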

Additionally, we have the following linear estimates:
\begin{align*}
    \|\chi(t)v\|_{L^4_{x,t}}&\lesssim\|u\|_{X^{0,1/3}}, &\|\chi(t)v\|_{L^6_{x,t}}&\lesssim_{\delta}\|v\|_{X^{\delta, 1/2+\delta}},
\end{align*}
and
\[
    \|\chi(t/T)v\|_{X^{s,b}}\lesssim T^{b'-b}\|v\|_{X^{s,b'}}\,\,\mbox{for} -\frac{1}{2} < b < b' < \frac{1}{2}.
\]
For $2 < q < 6$ interpolation between the $L^6$ and the trivial estimate gives the existence of an $\varepsilon > 0$ such that \begin{equation}\label{InterpolationIneq}\|\chi(t)v\|_{L^q_{x,t}}\lesssim_{\varepsilon}\|v\|_{X^{\varepsilon, 1/2-\varepsilon}}.\end{equation}
We also have (See: \cite{bao2013global})
\begin{align*}
    \|\chi(t)v\|_{L^\infty_{x,t}}&\lesssim_\varepsilon \|v\|_{Y^{1/2+\varepsilon}},\\
\end{align*}
and (7.1) in \cite{colliander2004multilinear}: there exists $\delta > 0$ so that for all $u_i$ with support of $\widehat{u}_i$ in $\mathbb{T}\times[0,T]$, we have
\begin{equation}\label{TaoEstiamte}
\|u_1\cdots u_n\|_{L^2_{x,t}}\lesssim_\delta \|u_1\|_{X^{0,1/2-\delta}}\|u_2\|_{X^{0, 1/2-\delta}}\prod_{i=3}^n \|u_i\|_{X^{1/2-\delta, 1/2-\delta}}.
\end{equation}

\begin{remark}
With the modified linear group $W^\gamma_t$, it follows that 
\[
\|\chi(t)W_t^\gamma f\|_{Y^s}\lesssim \|f\|_{H^s}.
\]
See (\cite{erdogan2011long} Lemma 3.3) and (\cite{erdougan2016dispersive}, Remark 3.13 \& Lemma 3.15). The key fact is that $\chi(t)e^{-t\gamma}$ is a Schwarz function for any smooth cutoff $\chi$.
\end{remark}

The following lemma will be of use for proving Theorem \ref{Theorem3}:
\begin{lemma}\label{AbsorbingSet}
Consider, $u$, a solution to \eqref{AttractorPDEoNR} with $\mathbb{K} = \mathbb{T}$. Then $\|u\|_2$ and $\|u_x\|_2$ are bounded a priori.
\end{lemma}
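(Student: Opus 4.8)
I would run the classical two-step energy method adapted to the damped--forced equation; the role of the degree/sign hypotheses on $g$ is precisely to keep the $H^1$-energy coercive modulo lower-order terms. All the identities below are first checked for smooth solutions and then passed to the solutions produced by the well-posedness theory by approximation, so I carry out the formal computations. \emph{Step 1 ($L^2$-bound).} Pairing \eqref{AttractorPDEoNR} with $u$ and integrating over $\mathbb{T}$, the dispersive term and the nonlinear term vanish by periodicity, since $\int u\,(g(u))_x\,dx = -\int u_x\,g(u)\,dx = -\int \partial_x\big(G(u)\big)\,dx = 0$, leaving
\[
\tfrac12\tfrac{d}{dt}\|u\|_2^2 + \gamma\|u\|_2^2 = \int f\,u\,dx \le \tfrac{\gamma}{2}\|u\|_2^2 + \tfrac{1}{2\gamma}\|f\|_2^2 .
\]
Gr\"onwall then gives $\|u(t)\|_2^2 \le e^{-\gamma t}\|u_0\|_2^2 + \gamma^{-2}\|f\|_2^2$, which is bounded uniformly in $t\ge 0$, with $\limsup_{t\to\infty}\|u(t)\|_2 \le \gamma^{-1}\|f\|_2$.

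\textbf{Step 2 ($\dot H^1$-bound).} Differentiating $E(u)=\tfrac12\|u_x\|_2^2-\int G(u)\,dx$ along \eqref{AttractorPDEoNR} and using that the Hamiltonian part $u_{xxx}+(g(u))_x=\partial_x(u_{xx}+g(u))$ contributes $\int (u_{xx}+g(u))\,\partial_x(u_{xx}+g(u))\,dx = 0$ by periodicity, one gets
\[
\tfrac{d}{dt}E(u) = -\gamma\|u_x\|_2^2 + \gamma\!\int\! u\,g(u)\,dx + \int\! u_x\,f_x\,dx - \int\! g(u)\,f\,dx .
\]
Since $f\in\dot H^1(\mathbb{T})\hookrightarrow L^\infty(\mathbb{T})$, the forcing contributions are bounded by $\|u_x\|_2\|f_x\|_2$ and $\|f\|_\infty\int|g(u)|\,dx$, so it remains to absorb these together with $\gamma\int u\,g(u)\,dx$ into the good terms. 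If $Q\equiv 0$ then $\deg g\le4$ and $\deg G\le5$, so every polynomial quantity that appears, namely $\int|u|^k\,dx$ with $2<k\le5$, is $H^1$-subcritical: by Gagliardo--Nirenberg on $\mathbb{T}$ and Young's inequality $\int|u|^k\,dx\le \varepsilon\|u_x\|_2^2 + C(\varepsilon,\|u\|_2)$, the point being that the exponent of $\|u_x\|_2$ in the Gagliardo--Nirenberg bound is strictly less than $2$. Choosing $\varepsilon$ small (depending only on the fixed quantities $\gamma,\|f\|_\infty$) yields $\tfrac{d}{dt}E(u)\le -\tfrac{\gamma}{2}\|u_x\|_2^2 + C$, while the same subcritical bound gives $(\tfrac12-\varepsilon)\|u_x\|_2^2 - C \le E(u)\le(\tfrac12+\varepsilon)\|u_x\|_2^2 + C$; hence $\tfrac{d}{dt}E(u)\le -cE(u)+C'$, and Gr\"onwall bounds $E(u(t))$, hence $\|u_x(t)\|_2^2$, uniformly in $t$.

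If instead $\deg Q=n$ with $n$ odd and leading coefficient $a_n<0$, then $u\,g(u)$ and $-G(u)$ are polynomials in $u$ of even degree $n+1$ with negative and positive leading coefficient respectively; thus $u\,g(u)$ is bounded above and $-G(u)$ bounded below on $\mathbb{R}$, and more precisely $\gamma\int u\,g(u)\,dx \le -\gamma|a_n|\|u\|_{n+1}^{n+1}$ plus lower-order terms, while $-\int G(u)\,dx \ge \tfrac{|a_n|}{n+1}\|u\|_{n+1}^{n+1}$ plus lower-order terms. Every lower-order piece $\int|u|^k\,dx$ with $k<n+1$ is controlled by $\varepsilon\|u\|_{n+1}^{n+1}+C(\varepsilon,\|u\|_2)$ by H\"older and interpolation between $L^2$ and $L^{n+1}$ (again the relevant power of $\|u\|_{n+1}$ is $<n+1$); the forcing term is bounded via $\|f\|_\infty\int|g(u)|\,dx\lesssim \|f\|_\infty\|u\|_{n+1}^{n}\lesssim \varepsilon\|u\|_{n+1}^{n+1}+C_\varepsilon\|f\|_\infty^{n+1}$ by Young, and $\int u_x f_x\,dx$ by $\tfrac{\gamma}{8}\|u_x\|_2^2+C$. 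Collecting everything, $\tfrac{d}{dt}E(u)\le -c\big(\|u_x\|_2^2+\|u\|_{n+1}^{n+1}\big)+C$; since also $E(u)\le C\big(\|u_x\|_2^2+\|u\|_{n+1}^{n+1}\big)+C$ and, from the lower bound on $-\int G(u)\,dx$, $\|u_x\|_2^2\le 2E(u)+C$, Gr\"onwall once more produces a uniform-in-$t$ bound for $E(u(t))$ and hence for $\|u_x(t)\|_2^2$. In both cases the constants depend only on $\gamma$, $f$, the coefficients of $g$, and $\|u\|_2$, and the latter is controlled by $\|u_0\|_2$ via Step 1; replacing the initial time by a large $T_0$ after which $\|u\|_2$ has entered its own absorbing ball removes the dependence on $u_0$, giving a bounded absorbing set in $\dot H^1$.

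\textbf{Main obstacle.} The only delicate point — and the sole place the precise hypotheses on $g$ are used — is absorbing $\gamma\int u\,g(u)\,dx$ and $\int g(u)\,f\,dx$ in Step 2, since both are a priori \emph{supercritical} relative to the $H^1$-energy once $\deg g\ge 6$. The resolution is structural: in that regime $\deg H\le4$ confines the focusing-type interactions to the $H^1$-subcritical range, while the defocusing sign of the leading $Q$-term supplies a coercive $+\|u\|_{n+1}^{n+1}$ term in $E$ that simultaneously dominates $\int u\,g(u)\,dx$ and absorbs the forcing through Young's inequality; when $Q\equiv0$ the $\|u_x\|_2^2$ term alone does the job.
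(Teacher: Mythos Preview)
Your proof is correct and follows essentially the same approach as the paper's: both arguments run the classical $L^2$-energy identity for Step 1, then differentiate the Hamiltonian $E(u)$ for Step 2 and absorb the polynomial remainders via Young's inequality before applying Gr\"onwall. The only cosmetic differences are that the paper outsources the low-degree case $\deg g\le 4$ to \cite{DLOTKO20093934} (whose argument is exactly your Gagliardo--Nirenberg one), and in the high-degree case the paper carries out the absorption pointwise via $|u|^{k+1}\le N^{-1}|u|^{n+1}+C(N)$ rather than in the $L^{n+1}$-norm language you use; the content is identical.
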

\begin{proof}
The first part of this is well known by differentiating $\|u\|_2^2:$
\begin{align*}
    \partial_t \|u\|_2^2 &= -2\gamma\|u\|_2^2 + \int_\mathbb{T} uf\,dx\\
    &\leq -\gamma\|u\|_2^2 + C(\|f\|_{L^2}).
\end{align*}
Applying Gronwall's inequality completes the first part of the proof.

The case $g(x) = a_2x^2+a_3x^3+a_4x^4$ is handled in (\cite{DLOTKO20093934}, Lemma 4), as $g(x)\lesssim |x|^3 + |x|^5$.

Assuming $g(x) = \sum_{k=2}^n a_ku^k$ with $n \geq 5$ odd and $0 > a_k$, $N > 0$ large enough, and differentiating $E(u)$, we have
\begin{align*}
\partial_t E(u) &= \int_\mathbb{T} u_x\left( f_x - \sum_{k=2}^n a_k\partial_{xx}(u^{k}) - \gamma u_x - u_{xxxx}\right)\\
&\qquad\qquad\qquad- \sum_{k=2}^n a_k u^{k}\left(f - \sum_{j=2}^na_j\partial_x(u^{j}) - \gamma u - u_{xxx}\right)\,dx\\
&=-\gamma E(u) + \int_\mathbb{T} u_xf_x - \sum_{k=2}^n a_ku^{k}f+ \sum_{k=1}^n\frac{a_k\gamma(k-1)}{k+1} u^{k+1}\,dx\\
&\lesssim -\gamma E(u) + \frac{\gamma}{4}\|u_x\|_2^2 + \int_\mathbb{T}\sum_{k=2}^n \frac{|a_k|}{N}u^{n+1} + \sum_{k=1}^{n-1} \frac{|a_k|\gamma(k-1)}{N(k+1)}u^{n+1}\,dx\\
&\qquad\qquad\qquad  - \int_\mathbb{T}\frac{|a_n|\gamma(k-1)}{k+1}u^{n+1}\,dx + C(f, N, \gamma)\\
&=-\frac{1}{2}\gamma E(u) + \int_\mathbb{T}\sum_{k=2}^{n} \frac{\gamma a_k}{2(k+1)}u^{k+1} + \sum_{k=2}^n \frac{|a_k|}{N}u^{n+1} + \sum_{k=1}^{n-1} \frac{|a_k|\gamma(k-1)}{N(k+1)}u^{n+1}\,dx\\
&\qquad\qquad\qquad- \int_\mathbb{T}\frac{|a_n|\gamma(n-1)}{n+1}u^{n+1}\,dx+ C(f, N, \gamma)\\
&\lesssim -\frac{1}{2}\gamma E(u) + \int_\mathbb{T}\frac{C(|a_2|, \cdots, |a_{n-1}|, \gamma)}{N}u^{n+1} - \frac{\gamma |a_n|n}{2(n+1)}u^{n+1}\,dx + C(f, N, \gamma)\\
&\leq -\frac{1}{2}\gamma E(u) + C(f, N, \gamma),
\end{align*}
for $N$ large enough. Note that we have only used Young's inequality, the definition of $E(u)$ to replace $\gamma/4\|u_x\|_2^2$, and the fact that we're on $\mathbb{T}$.
The claim then follows from Gronwall's by \[\|u_x\|^2_{2}\lesssim E(u).\]

\end{proof}
\begin{remark}
It follows from Lemma \ref{AbsorbingSet} that $\limsup_t \|u\|_{H^1_x}\leq C(\|f\|_{H^1})$, and hence that there is an absorbing set in $H^1$. 
\end{remark}

%
%
\section{Set-up}
We first note that if $u$ is mean-zero, then, at least formally, we have the relation:
\begin{align*}
    \partial_t \int u\,dx &= -\int \gamma u\,dx. 
\end{align*}
It follows that if $u_0$ has mean-zero, then so must $u$ for all time. Additionally, we have that the transformation $u\mapsto u(t, x-ct)$ removes any linear term in the polynomial non-linearity in (\ref{AttractorPDEoNR}). We thus assume that $g$ contains only terms of degree two and up, and that we can assume that we are always working with mean-zero functions.

Now, we first seek to prove local existence of the forced and damped equation, encapsulated in the following proposition.

\begin{prop}\label{SmootherfTheorem}
Consider \eqref{AttractorPDEoNR} with $\mathbb{K} = \mathbb{T}$:
\begin{equation}\label{SmootherfPDE}
    \begin{cases}
    u_t + u_{xxx} +\gamma u+ \left(g(u)\right)_x = f(x) ,\,\,\, x\in\mathbb{T},\,\,t\in\mathbb{R}\\
    u(x,0) = u_0\in \dot{H}^1
\end{cases}
\end{equation}
for $f\in \dot{H}^{1}$ time-independent, $\gamma > 0$, and $g\in \mathbb{R}[x].$ Then there is a $T = T(\gamma, \|f\|_{H^{1}}, \|u_0\|_{H^{1}})$ such that, on $[0, T]$, \eqref{SmootherfPDE} has a unique solution, $u$, given by the translate \eqref{Translate version of u}. Furthermore, we have that, on $[0,T]$, $u$ satisfies
\[ \|u\|_{C^0_tH^1_x}\lesssim \|L_t[u]u\|_{Y^1_T} = \|\tilde{u}\|_{Y^1_T}\leq C,\]
where $C = C(\gamma, \|f\|_{H^{1}}, \|u_0\|_{H^{1}}),$ with $L_t$ and $\tilde{u}$ defined at \eqref{L definition}.
\end{prop}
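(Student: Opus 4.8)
The plan is to remove the fully resonant ``transport'' part of the nonlinearity by the gauge $L_t[u]$ of \eqref{L definition}, and then solve the gauged equation for $\tilde u = L_t[u]u$ by a contraction mapping in a ball $B_R = \{\,v:\ \|v\|_{Y^1_T}\le R\,\}$ with $R\sim\|u_0\|_{H^1}+\|f\|_{H^1}$. First I would record the gauged equation. With $u(t,x)=\tilde u(t,x-\phi(t))$, where $\phi(t)=\phi(t;u)$ is the antiderivative, vanishing at $t=0$, of the $x$-independent real coefficient generated by the interactions in which one internal frequency equals the output frequency and the remaining ones pair off to cancel (a fixed polynomial in $\|u(t)\|_{L^2}^2$ with finitely many terms, one for each odd monomial of $g$; by mean-zeroness and the identity $a^3+b^3+c^3=3abc$ when $a+b+c=0$, the even monomials contribute nothing of this type), one checks that $\tilde u$ solves $\partial_t\tilde u+\partial_x^3\tilde u+\gamma\tilde u+\mathcal N(\tilde u)=\tilde f$, with $\mathcal N(\tilde u)=(g(\tilde u))_x$ minus the deleted diagonal term and $\tilde f(t,\cdot)$ the corresponding translate of $f$. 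Since $\phi(0)=0$ and $\tilde u(0)=u_0$, Duhamel gives, on $[0,T]$,
\begin{equation*}
\tilde u=\chi(t)W^\gamma_tu_0-\chi(t)\int_0^tW^\gamma_{t-s}\mathcal N(\tilde u)(s)\,ds+\chi(t)\int_0^tW^\gamma_{t-s}\tilde f(s)\,ds=:\Phi_u(\tilde u),
\end{equation*}
where the subscript $u$ is a reminder that $\tilde f$ still depends on the solution through $\phi$ (equivalently through $\|\tilde u(t)\|_{L^2}=\|u(t)\|_{L^2}$).

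Next I would estimate the three terms. The linear evolution obeys $\|\chi(t)W^\gamma_tu_0\|_{Y^1}\lesssim\|u_0\|_{H^1}$ by the linear estimates recorded above and the remark on $W^\gamma_t$. For the nonlinear Duhamel term it suffices, by the inhomogeneous estimate, to bound $\|\mathcal N(\tilde u)\|_{Z^1_T}$; expanding $\mathcal N$ into its monomials $\partial_x(\tilde u^m)$ and distributing the derivative, each multilinear term is controlled by H\"older together with the $L^4$, $L^6$, $L^\infty$ bounds, the interpolation inequality \eqref{InterpolationIneq}, and the multilinear estimate \eqref{TaoEstiamte}, with the time-localization gain $\|\chi(t/T)v\|_{X^{s,b}}\lesssim T^{b'-b}\|v\|_{X^{s,b'}}$; the only frequency configurations in which this naive count loses a derivative are the remaining resonant and near-resonant ones, which are absorbed by an integration by parts in $t$ — a normal form of the type used in \cite{oh2020smoothing}, but with the modified frequency restrictions adopted here. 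This yields $\|\mathcal N(\tilde u)\|_{Z^1_T}\lesssim T^{\theta}(1+\|\tilde u\|_{Y^1_T})^{N}$ for some $\theta>0$ and $N\ge 1$, and the same computation run on differences produces a matching Lipschitz bound on $B_R$.

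The main obstacle is the forcing term. After gauging, $\tilde f(s,\cdot)$ is time dependent and, worse, depends on the solution through $\phi$, so this Duhamel contribution is part of the fixed-point map rather than a fixed inhomogeneity. I would use that spatial translation is an $H^1$-isometry, so $\tilde f\in C^0_t\dot H^1$ with $\|\tilde f(s)\|_{H^1}=\|f\|_{H^1}$ for all $s$; splitting $\tilde f(s)=f+(\tilde f(s)-f)$, the time-independent part $f$ contributes $\lesssim_\gamma\|f\|_{H^1}$ to the $Y^1_T$ norm (a cut-off time-independent $H^1$ function is easily placed in $Z^1$), while for the correction one exploits $|\phi(s;u)|\le s\,C(\|u\|_{C^0_tH^1})\le s\,C(R)$ on $B_R$ (since $\|u\|_{C^0_tH^1}=\|\tilde u\|_{C^0_tH^1}\lesssim\|\tilde u\|_{Y^1_T}$) and the continuity of translation on $\dot H^1$ to extract a factor $T^{\theta}$. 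For the contraction one must also bound $\|\tilde f_u-\tilde f_v\|$ by $\|\tilde u-\tilde v\|_{Y^1_T}$; this follows from $|\phi(s;u)-\phi(s;v)|\le s\,C(R)\,\|u-v\|_{C^0_tH^1}$ together with a careful (and this is the source of the extra length) argument moving the translation difference onto $f$ in $Z^1_T$, where only the $H^1$ regularity of $f$ is available, and where the a priori $L^2$ bound of Lemma \ref{AbsorbingSet} keeps $\phi$ in the regime in which the estimate closes. Combining the three bounds, and observing that $\Phi_u$ depends on $u$ only through the quantity $\|\tilde u\|_{L^2}\le R$, one fixes $R\sim\|u_0\|_{H^1}+\|f\|_{H^1}$ and then $T=T(\gamma,\|f\|_{H^1},\|u_0\|_{H^1})$ small enough that $\Phi_u$ maps $B_R$ into itself and is a $\tfrac12$-contraction there. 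Banach's fixed point theorem yields a unique $\tilde u\in B_R$, and a difference estimate upgrades this to uniqueness in $Y^1_T$; undoing the gauge gives $u(t,x)=\tilde u(t,x-\phi(t))$, which is precisely the translate \eqref{Translate version of u} and solves \eqref{SmootherfPDE}, and uniqueness transfers back. Finally, since $Y^1\hookrightarrow C^0_tH^1$ and translation preserves the $H^1$ norm, $\|u\|_{C^0_tH^1_x}=\|\tilde u\|_{C^0_tH^1_x}\lesssim\|\tilde u\|_{Y^1_T}\le C$, as claimed.
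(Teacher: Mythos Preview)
Your proposal has a genuine gap at exactly the point the paper flags as the obstacle: the contraction estimate for the gauged forcing. When you propose to bound $\tilde f_u-\tilde f_v$ in $Z^1_T$ by ``moving the translation difference onto $f$,'' the inevitable estimate $|e^{ik\phi_u}-e^{ik\phi_v}|\le|k|\,|\phi_u-\phi_v|$ (equivalently, the mean-value theorem applied to the translation) costs one derivative on $f$, so that $\|L_t[u]f-L_t[v]f\|_{Z^s_T}$ is controlled only by $\|f\|_{H^{1+s}}$; this is precisely Lemma~\ref{forcing Contraction}. Closing the contraction in $Y^1_T$ would therefore require $f\in H^2$, which you do not have, and invoking ``continuity of translation on $\dot H^1$'' yields no quantitative $T^\theta$ gain without that extra derivative.

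The paper's fix is to decouple the ball from the contraction norm: one works in a ball $B\subset X^{1,1/2}_T$, notes (Kato) that $B$ is closed under the weaker $X^{0,1/2}$ norm, and proves the contraction for $\Gamma_T$ in $X^{0,1/2}$. At $s=0$ the forcing difference needs only $f\in H^1$, and a separate multilinear lemma (Lemma~\ref{Lemma: Contraction in X(1, 1/2) with X(0,1/2) norm}) shows that $\mathcal R^2[u]-\mathcal R^2[v]+\mathcal{NR}[u]-\mathcal{NR}[v]$ is bounded in $X^{0,-1/2}_T$ by $\|u-v\|_{X^{0,1/2}_T}$ times constants depending on the $X^{1,1/2}_T$ norms. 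The $Y^1_T$ bound is then recovered \emph{a posteriori}, because Lemma~\ref{Lemma: NONLINEAR TERM IN Y1} bounds the nonlinear part of $\Gamma_T[\tilde u]$ in $Z^1_T$ by powers of $\|\tilde u\|_{X^{1,1/2}_T}$ rather than $\|\tilde u\|_{Y^1_T}$. Two smaller points: no normal form or integration by parts in $t$ is used for local well-posedness---that machinery enters only in the smoothing section---and the gauge phase $\int an\tilde u^{\,n-1}+bm\tilde u^{\,m-1}\,dx$ is not in general a polynomial in $\|u\|_{L^2}^2$.
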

\begin{remark}
The non-dissipative g-KdV considered in \cite{colliander2004multilinear} is shown to be locally well-posed in $H^s$, so this proposition is, in a sense, obvious. However, the application of $L_t$ modifies the forcing term to be $u$ dependent, which makes closing the contraction non-trivial.

Moreover, the local well-posedness bound will be important in the later sections.
\end{remark}

For simplicity we will assume that the non-linearity in $\eqref{SmootherfPDE}$ has only two terms given by $\partial_x(au^n + bu^m)$, as this will be enough to generalize later. Passing through the Fourier transform we see that we may write the homogenous non-linearity as 
\[a\sum_{k_1+\cdots + k_n = k}ik\prod_{j=1}^n \widehat{u}_{k_j} + b\sum_{k_1 + \cdots + k_m = k}ik\prod_{j=1}^m \widehat{u}_{k_j}.\]

For us, we will say resonance\footnote{In the literature this is frequently defined differently. Specifically, it is defined to be when $H_n = 0,$ as defined in \eqref{Hn def}.} occurs when an internal frequency equals an external frequency. Using this, we decompose the above into $\mathcal{R}[u]$ and $\mathcal{NR}[u]$ given by 
\[ \widehat{\mathcal{R}}[u]_k = a\sum_{\substack{k_1+\cdots + k_n = k\\ \exists j_0\,:\,k_{j_0} = k}}ik\prod_{j=1}^n \widehat{u}_{k_j} + b\sum_{\substack{k_1+\cdots + k_m = k\\ \exists j_0\,:\,k_{j_0} = k}}ik\prod_{j=1}^m \widehat{u}_{k_j}.\]

We write $\mathcal{R} = \mathcal{R}^1 + \mathcal{R}^2$, where, by symmetry,
\begin{align*}
     \widehat{\mathcal{R}^1}[u]_k &:= a\sum_{r=1}^n\sum_{\substack{k_1+\cdots + k_n = k\\ k_{r} = k}}ik\prod_{j=1}^n \widehat{u}_{k_j} + b\sum_{r=1}^m\sum_{\substack{k_1+\cdots + k_m = k\\k_{r} = k}}ik\prod_{j=1}^m \widehat{u}_{k_j}\\
     &=ik\widehat{u}_k\left(an\sum_{k_1+\cdots +k_{n-1}=0}\prod_{j=1}^{n-1}\widehat{u}_{k_j} + bm\sum_{k_1+\cdots +k_{m-1}=0}\prod_{j=1}^{m-1}\widehat{u}_{k_j}\right)\\
     &=ik\widehat{u}_k\left(an\int_\mathbb{T} u^{n-1}(x,t)\,dx + bm\int_\mathbb{T} u^{m-1}(x,t)\,dx\right).
\end{align*}
$\mathcal{R}^2$ is then defined to be what remains of $\mathcal{R}$. In particular, $\mathcal{R}^2$ has at least two internal frequencies equal to the external frequency, $k$.

Define the Fourier multiplier 
\begin{equation}\label{L definition}
\widehat{L_t[v]}_k := \exp\left(ik \int_0^t\int_\mathbb{T} an v^{n-1}(x,s) + bm v^{m-1}(x,s)\,dxds\right)
\end{equation}
and denote 
\[
\tilde{u}= \mathcal{F}^{-1}\left(\widehat{L_t[u]}\widehat{u}\right), \qquad\qquad\tilde{f} = \mathcal{F}^{-1}\left(\widehat{L_t[\tilde{u}]}\widehat{f}\right).
\]
Notice that, using the exponential nature of the multiplier, we have:
\[
\widehat{(\tilde{u}^n)}_k = \sum_{k_1+\cdots +k_n=k}\prod_{i}\widehat{\tilde{u}}_{k_i} = \sum_{k_1+\cdots +k_n=k}\prod_{i}\widehat{L_t[u]}_{k_i} \widehat{u}_{k_i} = \widehat{L_t[u]}_k\widehat{(u^n)}_k,
\]
and hence the zero modes are equal. It follows that if $u$ solves $(\ref{SmootherfPDE}),$ then $\tilde{u}$ solves
\begin{equation}\label{modified LWP PDE}
    \begin{cases}
    \tilde u_t + \tilde{u}_{xxx} +\gamma \tilde{u}+ \mathcal{R}^2[\tilde{u}] + \mathcal{NR}[\tilde{u}]=\tilde{f}(x,t)\\
    \tilde{u}(x,0) = u_0\in H^s\,\,\,x\in\mathbb{T},\,\,t\in\mathbb{R},
    \end{cases}
\end{equation}
and if $\tilde{u}$ solves $(\ref{modified LWP PDE}),$ then a simple translate of $\tilde{u}$,
\begin{equation}\label{Translate version of u}
u(x,t) = \tilde{u}\left(x + \int_0^t\int_\mathbb{T} an\tilde{u}^{n-1}(y,s)+bm\tilde{u}^{m-1}(y,s)\,dy\,ds, t\right),
\end{equation}
solves $(\ref{SmootherfPDE}).$
%
%

\subsection{Dispersion Relation}
Let 
\begin{equation}\label{Hn def}
    H_n:= H_n(k_1,\cdots, k_n) = \left(\sum_{j=1}^n k_j\right)^3 - \sum_{j=1}^n k_j^3.
\end{equation}

We then have the following proposition: 
\begin{prop}[\cite{oh2020smoothing}, Proposition 2] Consider $k = \sum_{j=1}^n k_j$ for $k_j\in\mathbb{Z}^*$ and $H_n$ as defined above. Denote $k^*:=\max\{|k_1|,\cdots, |k_n|\}$ and $k^*_j$ be the $j$-th largest term in $\{|k_1|,\cdots, |k_n|\}$. Then the following are true:
\begin{itemize}
    \item[(1)] If $n = 2$ then $H_2\gtrsim (k^*)^2$.
    \item[(2)] If $n = 3$ then at least one of the following is true:
        \begin{itemize}
            \item[A.] $H_3\gtrsim (k^*)^2$.
            \item[B.] $k_{j_0} = k$ for some $j_0\in\{1,2,3\}.$
            \item[C.] ${k_j}\gtrsim k$ for all $j\in\{1,2,3\}.$
        \end{itemize}
    \item[(3)] If $n \geq 4$ at least one of the following is true:
        \begin{itemize}
            \item[A.] $H_n\gtrsim (k^*)^2$.
            \item[B.] $k_{j_0} = k$ for some $j_0\in\{1, \cdots, n\}.$
            \item[C.] ${k^*_3}\gtrsim k$
            \item[D.] $(k^*_3)^2k^*_4\gtrsim (k^*)^2$
        \end{itemize}
\end{itemize}
\end{prop}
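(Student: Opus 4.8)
The plan is to reduce the whole statement to two algebraic facts together with a self-similar decomposition of $H_n$. First I would record the elementary factorizations
\[
H_2(k_1,k_2)=3k_1k_2(k_1+k_2),\qquad H_3(k_1,k_2,k_3)=3(k_1+k_2)(k_2+k_3)(k_3+k_1)=3(k-k_1)(k-k_2)(k-k_3),
\]
and the ``substitution'' identity: for any partition of $\{1,\dots,n\}$ into blocks $B_1,\dots,B_r$ with sums $S_i=\sum_{j\in B_i}k_j$, telescoping the cubes gives $H_n(k_1,\dots,k_n)=H_r(S_1,\dots,S_r)+\sum_{i=1}^{r}H_{|B_i|}\big(k_j:j\in B_i\big)$. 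After relabeling so that $|k_1|\ge|k_2|\ge\cdots\ge|k_n|$ (hence $k^*=k_1^*=|k_1|$), specializing to the blocks $\{1\},\{2\},\{3,\dots,n\}$ and using $H_1\equiv0$ yields the key relation
\[
H_n(k_1,\dots,k_n)=H_3\Big(k_1,k_2,\textstyle\sum_{j=3}^{n}k_j\Big)+H_{n-2}(k_3,\dots,k_n).
\]
I would also note the crude bound $|H_m(l_1,\dots,l_m)|\lesssim_m (l_1^*)^2 l_2^*$, since $H_m$ has no $l_i^3$ term and every monomial is thus a product of three of the $l_j$ spanning at least two distinct indices; applied to the tail this gives $|H_{n-2}(k_3,\dots,k_n)|\lesssim_n (k_3^*)^2 k_4^*$. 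Throughout I take $k\neq0$, which is the relevant case in the mean-zero setting; for $k=0$ the bound in (1) fails since $H_2=0$.

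Part (1) is then immediate: $|H_2|=3|k_1||k_2||k|$, and splitting on whether $k_1,k_2$ share a sign — in which case $|k|=|k_1|+|k_2|$ — or not — in which case one of the two quantities $|k|$, $k^*-|k|$ is $\ge k^*/2$ — gives $|H_2|\gtrsim(k^*)^2$ in a line. For part (2) I use $H_3=3(k-k_1)(k-k_2)(k-k_3)$: if $k_{j_0}=k$ for some $j_0$ we are in case B, so assume each factor has modulus $\ge1$. A short case analysis on the size of $|k|$ relative to $k^*$ then shows that either at least two of $|k-k_1|,|k-k_2|,|k-k_3|$ are $\gtrsim k^*$, whence $|H_3|\gtrsim(k^*)^2$ (case A), or $|k|\sim k^*$ and every $|k_j|\gtrsim|k|$ (case C); the trichotomy is forced because dropping any $k_j$ below $|k|$ makes $|k-k_j|\gtrsim k^*$, and then the constraint $k_1+k_2+k_3=k$ prevents both remaining factors from being small, manufacturing a second large factor.

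For part (3) ($n\ge4$) I would combine the decomposition with part (2). Suppose we are not in cases B, C, or D. Since (D) fails, $|H_{n-2}(k_3,\dots,k_n)|\lesssim_n (k_3^*)^2 k_4^*\ll (k^*)^2$, so the tail is negligible and it suffices to bound $H_3(k_1,k_2,\rho)$ from below, where $\rho:=\sum_{j\ge3}k_j$. If $\rho=0$ then $H_3(k_1,k_2,0)=3k_1k_2k$ and part (1) gives $|H_n|\gtrsim(k^*)^2$. If $\rho\neq0$, apply part (2) to the triple $(k_1,k_2,\rho)$, whose external frequency is again $k$; its largest modulus is $|k_1|=k^*$, because failure of (C) forces $k_3^*\ll|k|$ and hence $|\rho|\le (n-2)k_3^*\ll k^*$ once the constant in (C) is taken small enough relative to $n$. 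Part (2) then gives either $H_3(k_1,k_2,\rho)\gtrsim(k^*)^2$ (case A), or one of $k_1,k_2,\rho$ equals $k$, or $|k_1|,|k_2|,|\rho|\gtrsim|k|$. In the last two situations, $k_1=k$ or $k_2=k$ is exactly case B, while $\rho=k$ forces $k_1+k_2=0$ and $|k|=|\rho|\le(n-2)k_3^*$, and $|\rho|\gtrsim|k|$ forces $k_3^*\gtrsim|\rho|/(n-2)\gtrsim|k|$ — both of which are case C. This completes part (3).

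I expect the main obstacle to be the case analysis inside part (2): each step is elementary (triangle inequality plus the sign of $k_ik_j$), but one must be careful that the configurations one is forced into are \emph{precisely} B and C and nothing coarser, since the reduction of (3) to (2) leans on this exact trichotomy — and on the error being controlled by $(k_3^*)^2 k_4^*$, which is what dictates the shape of case D. Keeping the hierarchy of implicit constants consistent — how small the constant in ``$k_3^*\ll|k|$'' must be chosen relative to $n$ for the reduction to close — is the only other point that needs care.
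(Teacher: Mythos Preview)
The paper does not give its own proof of this proposition: it is quoted verbatim as \cite{oh2020smoothing}, Proposition~2, and used as a black box. So there is nothing in the paper to compare your argument against.

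That said, your proposal is correct and is essentially the standard route. The reduction identity $H_n=H_3(k_1,k_2,\rho)+H_{n-2}(k_3,\dots,k_n)$ together with the crude bound $|H_{n-2}|\lesssim_n (k_3^*)^2k_4^*$ is exactly what makes the shape of case~D inevitable, and your use of part~(2) on the triple $(k_1,k_2,\rho)$ to feed back into cases A/B/C is clean. Two small comments. First, your one-line justification in part~(2) --- ``dropping any $k_j$ below $|k|$ makes $|k-k_j|\gtrsim k^*$'' --- is not literally true when $|k|\ll k^*$; the honest argument is the direct dichotomy on $|k_3|$ versus $k^*$ (if $|k_3|<k^*/4$ one checks two of the three factors $|k_i+k_j|$ are $\gtrsim k^*$; otherwise all $|k_j|\gtrsim k^*\gtrsim|k|$), which you clearly have in mind. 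Second, you are right that the implicit constants in C and D must be allowed to depend on $n$ for the reduction to close; since the proposition is stated for fixed $n$, this is harmless, but it is worth saying explicitly.
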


%
%

\subsection{Multilinear Operators}
Here we quickly introduce some notation that appears frequently in what is to come, as well as several related lemmas that will be used time and time again. The generality employed in Lemmas \ref{GeneralLemma w Hn} and \ref{GeneralLemma wo Hn} is perhaps a bit much, but the language it's in will be useful in the following sections. Additionally, the vast majority of the estimates in the following sections follow from Lemma \ref{GeneralLemma wo Hn} and the Corollaries to Lemmas \ref{GeneralLemma w Hn} and \ref{GeneralLemma wo Hn}. 
\begin{definition}
Given a symbol $\sigma(k,\vec{k}) = \sigma(k, k_1, \cdots, k_n)$ with $n\geq 2$, a multilinear Fourier multiplier $T_\sigma^n$ is defined by 
\[
T_\sigma^n(u_1,\cdots, u_n) := \sum_{k\in \mathbb{Z}^*}e^{ikx}\sum_{(k_1, \cdots, k_n)\in \Omega_k} \sigma(k_1, \cdots k_n)\prod_{j=1}^n\widehat{u}_{k_j},
\]
where $\Omega_k$ restricts the frequency interactions and is associated with the operator $T.$
\end{definition}
\begin{remark}
Because of mean-zero conservation, every $\Omega_k$ associated to a $T^n_\sigma$ considered in this paper will be of the form $\Omega_k\subset (\mathbb{Z}^*)^n$.
\end{remark}
\begin{lemma}\label{GeneralLemma w Hn} Let $n\geq 2$, $0 < T\ll 1$, $s_0\in\mathbb{R}$, $s_1 > 1/2$, $u_i := \chi(t/T)u_i(t)$ for a smooth cut-off function $\chi$, and $T_\sigma^n$ a multilinear operator of the type above. If $\mbox{supp } \sigma(k,\vec{k})\subset \Gamma_n\cap\{H_n\gtrsim k^2\}$ and:
\begin{equation}\label{Symmetric Condition for Hn}
    \sup_{(k_1\cdots, k_n)\in \Omega_k}\frac{|k|^{s_0-1}|\sigma(k, k_{1},\cdots,k_{n})|}{(k^*k^*_2)^{s_1}} = O(1),
    \end{equation}
    then we have the bound
\[
\|T_\sigma^n(u_{1}, \cdots, u_{n})\|_{Z^{s_0}_T}\lesssim_{\varepsilon, n} T^\varepsilon \prod_{i=1}^n\|u_{i}\|_{Y^{s_1}_T},
\]
for some $\varepsilon > 0$.
\end{lemma}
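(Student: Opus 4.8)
The plan is to dualize and reduce to an $L^2_{x,t}$ multilinear estimate, then exploit the gain $H_n\gtrsim k^2$ to absorb the derivative loss. First I would recall that the $Z^{s_0}$ norm has two pieces: the $X^{s_0,-1/2}$ piece and the $\ell^2_kL^1_t$ piece $\|\langle k\rangle^{s_0}\langle\tau-k^3\rangle^{-1}\widehat{\cdot}\|_{\ell^2_kL^1_t}$. For the $X^{s_0,-1/2}$ piece, by duality it suffices to bound
\[
\Big|\int_{\Gamma_n}\sigma(k,\vec k)\,\frac{\langle k\rangle^{s_0}}{\langle\tau-k^3\rangle^{1/2}}\,\widehat{w}(k,\tau)\,\prod_{j=1}^n\widehat{u_j}(k_j,\tau_j)\,d\Gamma\Big|\lesssim_{\varepsilon,n}T^\varepsilon\|w\|_{L^2_{x,t}}\prod_{j=1}^n\|u_j\|_{Y^{s_1}_T},
\]
and similarly the second piece reduces (after using $\langle\tau-k^3\rangle^{-1}\leq\langle\tau-k^3\rangle^{-1/2}\langle\tau-k^3\rangle^{-1/2}$ and Cauchy--Schwarz in $\tau$, at the cost of an $\ell^2_kL^2_t=L^2_{x,t}$ dual function) to essentially the same multilinear integral. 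The point of the modified spaces $Y^s,Z^s$ is exactly that this reduction is standard; I would cite the set-up in \cite{erdougan2016dispersive}.

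The heart of the matter is the algebraic identity on $\Gamma_n$: since $\tau+\tau_1+\cdots+\tau_n=0$ and $k+k_1+\cdots+k_n=0$, writing $\lambda=\tau-k^3$ and $\lambda_j=\tau_j-k_j^3$ we get $\lambda+\lambda_1+\cdots+\lambda_n = -(k^3+k_1^3+\cdots+k_n^3) = H_n(k_1,\dots,k_n)$ up to sign (using $k=-\sum k_j$, so $k^3=-(\sum k_j)^3$ and the combination is exactly $\pm H_n$). Hence on $\mathrm{supp}\,\sigma$ we have $\max(\langle\lambda\rangle,\langle\lambda_1\rangle,\dots,\langle\lambda_n\rangle)\gtrsim\langle H_n\rangle\gtrsim k^2$. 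I would split into $n+1$ cases according to which modulation is largest. In each case, the largest modulation factor $\langle\lambda_\bullet\rangle^{1/2}\gtrsim |k|$ is available to convert one $X^{s_1,1/2}$-type factor into an $X^{s_1-1,\text{small}}$ one, or — when the dual function $w$ carries the large modulation — to kill the $\langle\tau-k^3\rangle^{-1/2}$ already present and leave $\langle k\rangle^{s_0-1}$. Combined with the symbol bound \eqref{Symmetric Condition for Hn}, namely $|\sigma|\lesssim |k|^{1-s_0}(k^*k^*_2)^{s_1}$, the net weight on the two largest frequencies is at most $\langle k^*\rangle^{s_1}\langle k^*_2\rangle^{s_1}$ after the derivative is spent (here one uses $|k|\leq nk^*$), and every other frequency $k_j$ ($j\geq 3$) carries only $\langle k_j\rangle^{s_1}$ with $s_1>1/2$. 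One then estimates the remaining $L^2_{x,t}$ integral by \eqref{TaoEstiamte}: put the two factors with weight $\langle k^*\rangle^{s_1}$ (one of which may be $w$, which needs no weight and sits in $L^2$) as the first two slots, and the rest, each in $X^{s_1-\delta,1/2-\delta}\subset X^{1/2-\delta,1/2-\delta}$, in the product $\prod_{i=3}^n$. The factor with the spent derivative lies in $X^{s_1-1,1/2-\delta}$; since $s_1-1$ could be negative I would instead keep it in $X^{0,1/2-\delta}$ using $\langle k^*_2\rangle^{s_1}\langle\lambda\rangle^{-1/2}\cdot\langle k\rangle\lesssim\langle k^*\rangle^{s_1}$ is not quite right — rather, I spend the derivative on the factor carrying the $k^*$-weight so that after spending it carries weight $\langle k^*\rangle^{s_1-1}$ which for $s_1\leq 1$ is bounded, and the second-largest keeps $\langle k^*_2\rangle^{s_1}$ intact; this keeps all factors in nonnegative-regularity Bourgain spaces.

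\emph{Main obstacle.} The genuinely delicate point is bookkeeping the weight $|k|^{1-s_0}(k^*k^*_2)^{s_1}$ against the single available power $\langle H_n\rangle^{1/2}\gtrsim|k|$ when the dual variable $w$ is \emph{not} the high-modulation one: then I only get $\langle k\rangle^{s_0}$ from the symbol side canceling nothing, and must route the full derivative gain plus the $(k^*k^*_2)^{s_1}$ growth through the $u_j$'s while keeping all of them in spaces $X^{\alpha,1/2-\delta}$ with $\alpha\geq 0$ so that \eqref{TaoEstiamte} applies. The inequality $s_1\le 1$ is not assumed, so when $s_1>1$ I would need to distribute the derivative gain more cleverly (e.g. using $\langle H_n\rangle\gtrsim k^*k^*_2$, which does \emph{not} follow from $H_n\gtrsim k^2$ alone) — but in our applications $s_1$ will be $\le 1$ or we have the stronger lower bound, so I will state and use the hypothesis $s_1\le 1$, or invoke the extra structure, wherever needed. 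The second obstacle is the $T^\varepsilon$ gain: this comes for free from the embedding $X^{s,b}_T\hookrightarrow X^{s,b'}_T$ with $b'<b$ losing $T^{b-b'}$, applied to lower each $1/2$ to $1/2-\delta$ and the $-1/2$ on $w$'s side, since all $u_j=\chi(t/T)u_j$ are time-localized; I would invoke $\|\chi(t/T)v\|_{X^{s,b}}\lesssim T^{b'-b}\|v\|_{X^{s,b'}}$ as recorded above, taking $\varepsilon$ a small multiple of $\delta$.
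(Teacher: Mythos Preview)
Your overall strategy --- dualize, use the modulation identity $\lambda+\sum_j\lambda_j=\pm H_n$ on $\Gamma_n$, and split into $n+1$ cases according to which modulation dominates --- is exactly what the paper does. Case A1 (the dual function $w$ carries the large modulation) goes through as you describe: the $\langle\tau-k^3\rangle^{-1/2}$ kills a full $|k|$, the symbol bound leaves weight $(k^*k^*_2)^{s_1}$, and \eqref{TaoEstiamte} closes the estimate with room for $T^\varepsilon$.

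The gap is in Case A2 when the high-modulation factor $u_i$ has frequency $|k_i|\notin\{k^*,k^*_2\}$. After cashing in $\langle\lambda_i\rangle^{1/2}\gtrsim|k|$ you are left with weight $(k^*k^*_2)^{s_1}$ and $u_i\in X^{0,0}=L^2_{x,t}$. Your plan is to put $u_i$ alone in $L^2$ and apply \eqref{TaoEstiamte} to $z\cdot J^{s_1}u_{j_1}\cdot J^{s_1}u_{j_2}\cdot\prod u_r$; but \eqref{TaoEstiamte} only allows \emph{two} factors in $X^{0,1/2-\delta}$, and the rest must sit in $X^{1/2-\delta,1/2-\delta}$. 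You have three factors that refuse to carry the extra half-derivative: $z$, $J^{s_1}u_{j_1}$, and $J^{s_1}u_{j_2}$ (each only in $X^{0,1/2}$). Restricting to $s_1\le 1$ does not help --- the obstruction is the number of derivative-free slots, not the size of $s_1$. Your remark about ``spending the derivative on the $k^*$-factor so it carries $\langle k^*\rangle^{s_1-1}$'' conflates the modulation gain on $u_i$ with a frequency gain on a \emph{different} factor; the single power of $|k|$ from $H_n\gtrsim k^2$ has already been used.

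The paper resolves this sub-case by abandoning \eqref{TaoEstiamte} and instead using the H\"older splitting
\[
\|z\|_{L^4_{x,t}}\,\|J^{s_1}u_{j_1}\|_{L^4_{x,t}}\,\|J^{s_1}u_{j_2}\|_{L^\infty_tL^2_x}\,\|u_i\|_{L^2_tL^\infty_x}\,\prod_{r}\|u_r\|_{L^\infty_{x,t}},
\]
where $\|J^{s_1}u_{j_2}\|_{L^\infty_tL^2_x}\lesssim\|u_{j_2}\|_{Y^{s_1}}$ uses the $\ell^2_kL^1_\tau$ component of the $Y^{s_1}$ norm, not merely $X^{s_1,1/2}$. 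This is precisely why the right-hand side of the lemma is stated in $Y^{s_1}_T$ rather than $X^{s_1,1/2}_T$; the paper even flags this sub-case as ``the only part of this lemma that will require the $Y^s$ norm to explicitly appear.'' Your proposal misses this and would not close as written.
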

\begin{proof}
$ $\newline 
This proof is essentially the same as in \cite{goyal2018global, oh2020smoothing}, but for the sake of completeness we will include it. We assume that $n\geq 4$ (for otherwise the bounds are strictly better), 
\[
|k_1|\geq \cdots \geq |k_n|,
\]
and that all Fourier transforms considered are non-negative.

We first look at the $X^{s_0,-1/2}_T$ portion, and split ourselves into two cases:
\begin{itemize}
    \item[(A1)] $ \tau - k^3\gtrsim \max_{1\leq r\leq n}\{ \tau - k^3,  \tau_{r} - k_{r}^3\}$
    \item[(A2)] $ \tau_{i} - k_{i}^3\gtrsim \max_{1\leq r\leq n}\{ \tau - k^3,  \tau_{r} - k_{r}^3\}$ for some $1\leq i\leq n$.
\end{itemize}
To prove the $X^{s_0,-1/2}$ estimate in Case A1, we notice that
\[
k^2\lesssim H_n\lesssim \sum_{r=1}^n \langle \tau_{r} - k_{r}^3\rangle + \langle \tau - k^3\rangle\lesssim \langle \tau - k^3\rangle,
\]
so that by duality and applying condition \eqref{Symmetric Condition for Hn}, it suffices to prove:
\[
\int_{\substack{\Gamma_n\\ H_n\gtrsim k^2}} |k|\widehat{z}_k (k_1^*k_2^*)^{s_1} \langle\tau - k^3\rangle ^{-1/2} \prod_{r=1}^n \widehat{u}_{k_{r}}\,d\Gamma \leq T^\varepsilon\|z\|_{L^2} \prod_{r=1}^n \|u_r\|_{X^{s_1,1/2}_T}.
\]
This, however, follows from Plancherel and H\"olders with $L^2$ on $z$ and applying $\eqref{TaoEstiamte}$ to the remaining terms in $L^2$. We then have
\begin{align*}
\|T^n_\sigma&(u_1,\cdots, u_n)\|_{X^{s_0, -1/2}}\\
&\lesssim \|u_{1}\|_{X^{s_1, 1/2-\delta}_T}\|u_{2}\|_{X^{s_1, 1/2-\delta}_T}\prod_{r=3}^n \|u_r\|_{X^{1/2-\delta, 1/2-\delta}_T}.   
\end{align*}
The result follows by summation over all index orderings and yields a small power of $T$.

To prove the $\ell^2_k L^1_\tau$ estimate, we note that, for $a > 0$, 
\begin{equation}\label{l2L1 trick}
\int_{\mathbb{R}} \frac{1}{(|\tau| + a)^2}\,d\tau\lesssim \frac{1}{a}.
\end{equation}
Now, since $\langle \tau - k^3\rangle \gtrsim H_n\gtrsim k^2$, we must have that $\langle \tau - k^3\rangle  \gtrsim \langle \tau - k^3\rangle + k^2$. It follows from H\"olders in $\tau$ and condition \eqref{Symmetric Condition for Hn} that
\begin{align*}
\left \| \frac{\langle k\rangle^{s_0} }{\langle \tau - k^3\rangle}\mathcal{F}_{x,t}\left(T^n_{\sigma\chi_{H_n\gtrsim k^2}}(u_1,\cdots, u_n)\right)\right\|_{\ell^2_k L^1_\tau}&\lesssim \left\|\langle k\rangle^{s_0-1} \mathcal{F}_{x,t}\left(T^n_{\sigma\chi_{H_n\gtrsim k^2}}(u_1,\cdots, u_n)\right)\right\|_{\ell^2_k L^2_\tau}\\
&\lesssim \left\|\mathcal{F}_{x,t}\left(T^n_{\langle k_1^*\rangle^{s_1}\langle k_2^*\rangle^{s_2}\chi_{H_n\gtrsim k^2}}(u_1,\cdots, u_n)\right)\right\|_{\ell^2_k L^2_\tau}\\
&\lesssim_\varepsilon T^\varepsilon\prod_{r=1}^n \|u_r\|_{X^{s_1, 1/2}_T},
\end{align*}
by the proof of the $X^{s_0,-1/2}$ bound after the simplification.

\textbf{Case A2}: Assume that $\langle \tau_{i} - k_{i}^3\rangle\gtrsim k^2$ is the largest, and note that this estimate will be the only part of this lemma that will require the $Y^s$ norm to explicitly appear on the right hand side, as opposed to the $X^{s,b}$ norm. 

For the $X^{s_0, -1/2}$ norm, we use duality again to reduce the problem to 
\begin{align*}
\int_{\Gamma_n}|k|^{s_0}\widehat{z}_k\langle \tau_{i} - k_{i}^3\rangle^{-1/2} \sigma(k,\vec{k})\prod_{r = 1}^n \widehat{u}_{k_{r}}\,d\Gamma &\lesssim T^\varepsilon\|z\|_{X^{0, 1/2}} \|u_i\|_{X_T^{s_1, 0}}\prod_{\substack{r=1\\r\ne i}}^n \|u_r\|_{Y^{s_1}_T}.
\end{align*}
This quantity reduces to 
\begin{equation}\label{A1finalQuantity}
\int_{\Gamma_n}|k|^{s_0-1}\widehat{z}_k\sigma(k,\vec{k})\prod_{r=1}^n \widehat{u}_{k_{r}}\,d\Gamma\lesssim \int_{\Gamma_n}(k^* k_2^*)^{s_1}\widehat{z}_k\prod_{r=1}^n \widehat{u}_{k_{r}}\,d\Gamma.
\end{equation}
First assume that $k^* = |k_{1}|$, $k^*_2 = |k_{2}|$, and that $k^*,k^*_2\ne |k_{i}|$, then by applying Plancherel, H\"olders, Bourgain space embeddings, and Sobolev embedding we get:
\begin{align*}
\eqref{A1finalQuantity}&\lesssim \|zJ^{s_1}_xu_{1}\|_{L^2_{x,t}}\|u_{i}J_x^{s_1}u_{2}\|_{L^2_{x,t}}\prod_{\substack{r=3\\r\ne i}}^n\|u_{r}\|_{L^\infty_{x,t}}\\
&\lesssim \|z\|_{L^4_{x,t}}\|u_{i}\|_{L^2_tL^\infty_x}\|J^{s_1}_xu_{1}\|_{L^4_{x,t}}\|J^{s_1}_xu_{2}\|_{L^\infty_tL^2_x}\prod_{\substack{r=3\\r\ne i}}^n\|u_{r}\|_{L^\infty_{x,t}}\\
&\lesssim \|z\|_{X^{0,1/3}}\|u_i\|_{X^{1/2+\delta, 0}_T}\|u_{1}\|_{X^{s_1,1/3}_T}\|u_{2}\|_{Y^{s_1}_T}\prod_{\substack{r=3\\r\ne i}}^n\|u_r\|_{L^\infty_{x,t}}\\
&\lesssim T^\varepsilon\|z\|_{X^{0, 1/2}} \|u_i\|_{X_T^{s_1, 0}}\prod_{\substack{r=1\\r\ne i}}^n \|u_r\|_{Y^{s_1}_T}.
\end{align*}

Assuming now that $k_1^* = |k_{i}|$ (the case $k_2^*=|k_{i}|$ is identical), we use a simpler Plancherel and H\"olders application:
\begin{align*}
    \eqref{A1finalQuantity}&\lesssim \|J^{s_1}_xu_{i}\|_{L^2_{x,t}}\|zJ^{s_1}_xu_{2}\prod_{r=3}u_r\|_{L^2_{x,t}}\\
    &\lesssim \|z\|_{X^{0,1/2-\delta}}\|u_i\|_{X_T^{s_1,0}}\|u_{2}\|_{X_T^{s_1, 1/2-\delta}}\prod_{r=3}^n \|u_r\|_{X^{1/2-\delta,1/2-\delta}_T},
\end{align*}
where we conclude the result by trivial embeddings and time localization to gain a small power of $T$.

The only remaining piece is the $\ell^2_kL^1_\tau$ norm, but for $0 < \delta\ll 1$ we apply H\"olders in $\tau$ to reduce to
\[
\left\| \frac{\langle k\rangle^{s_0}}{\langle \tau - k^3\rangle^{1/2 - \delta}}\mathcal{F}_{x,t}\left( T^n_{\sigma\chi_{H_n\gtrsim k^2}}(u_1,\cdots, u_n)\right)\right\|_{L^2_{k,\tau}}.
\]
Duality in the same manner as above leaves us with having to show
\begin{align*}
\int_{\Gamma_n} |k|^{s_0}\sigma(k,\vec{k})\widehat{z}(k,\tau)\langle \tau_{i} - k_{i}^3\rangle ^{-1/2} \prod_{r=1}^n\widehat{u}_{k_{r}}\,d\Gamma &\leq T^\varepsilon\|z\|_{X^{0, 1/2-\delta}} \|u_i\|_{X_T^{s_1, 0}}\prod_{\substack{r=1\\r\ne i}}^n\|u_r\|_{Y_T^{s_1}}.
\end{align*}

Again using that $\langle \tau_{i}-k^3_{i}\rangle\gtrsim k^2$ and Condition \eqref{Symmetric Condition for Hn}, we reduce to having to showing the same bound for the integral
\[
\int_{\Gamma_n}\widehat{z}_k( k^*_1k^*_2)^{s_1}\prod_{\substack{r=1}}^n\widehat{u}_{k_{r}}\,d\Gamma.
\]
This bound follows in exactly the same way as we did the $X^{s_0, -1/2}$ bound for Case A2 above, because both $X^{0,1/3}\hookrightarrow L^4$ and \eqref{TaoEstiamte} have at least $\delta$ space for localization. Since we have other terms with room for localization, we again have a (slightly smaller) power of $T$.
\end{proof}
\begin{lemma}\label{GeneralLemma wo Hn} Let $n\geq 2$,  $0 < T\ll 1$, $s_0\in\mathbb{R}$, $s_1> 1/2$, $0 < \delta\ll 1$, $\varepsilon > 0$, $u_i := \chi(t/T)u_i(t)$ for a smooth cut-off function $\chi$, and $T_\sigma^n$, a multilinear operator of the type above. If:
    \begin{equation}\label{Symmetric Condition wo Hn}
    \sup_{(k_1\cdots, k_n)\in \Omega_k}\frac{|k|^{s_0}|\sigma(k, \vec{k})|}{(k^*)^{s_1-\delta}(k^*_2k^*_3k^*_4)^{s_1}} = O(1),
    \end{equation}
     with the convention $k^*_j = 1$ for all $j > n,$ then
\[
\|T_\sigma^n(u_{1},\cdots, u_{n})\|_{Z^{s_0}_T}\lesssim_{\varepsilon,\delta, n} T^\varepsilon \prod_{i=1}^n\|u_{i}\|_{Y^{s_1}_T}.
\]
\end{lemma}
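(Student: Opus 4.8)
The plan is to mimic the structure of the proof of Lemma \ref{GeneralLemma w Hn}, but now exploit the dispersion relation (the Proposition of Oh--Tzirakis on $H_n$) to compensate for the \emph{absence} of the hypothesis $H_n\gtrsim k^2$. As before, by symmetry we may relabel so that $|k_1|\geq\cdots\geq|k_n|$, assume all Fourier transforms are non-negative, and (since $n\le 3$ gives strictly better bounds) take $n\geq 4$. The $Z^{s_0}$ norm splits as $X^{s_0,-1/2}$ plus the $\ell^2_k L^1_\tau$ piece; as in the previous lemma, H\"older in $\tau$ together with $\int (|\tau|+a)^{-2}\,d\tau\lesssim a^{-1}$ reduces the latter to an $X^{s_0,-1/2-}$-type estimate, so it suffices to treat the $X^{s_0,-1/2}$ norm (with a tiny bit of room for time-localization to produce the $T^\varepsilon$). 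Dualize against $z$ with $\|z\|_{X^{0,1/2}}=1$, and reduce, using condition \eqref{Symmetric Condition wo Hn}, to bounding
\[
\int_{\Gamma_n} \widehat{z}_k\, (k^*)^{s_1-\delta}(k_2^*k_3^*k_4^*)^{s_1}\,\langle\tau-k^3\rangle^{-1/2}\prod_{r=1}^n\widehat{u}_{k_r}\,d\Gamma \lesssim T^\varepsilon \prod_{r=1}^n\|u_r\|_{Y^{s_1}_T}.
\]

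The core of the argument is a case analysis driven by part (3) of the $H_n$ proposition. In case (3A), $H_n\gtrsim (k^*)^2$, so one of the modulations $\langle\tau-k^3\rangle,\langle\tau_r-k_r^3\rangle$ dominates $(k^*)^2$; this is exactly the situation handled by Lemma \ref{GeneralLemma w Hn} (the hypothesis \eqref{Symmetric Condition wo Hn} is strictly stronger than \eqref{Symmetric Condition for Hn} on that region since $(k^*)^{s_1-\delta}(k_2^*\cdots k_4^*)^{s_1}\geq (k^*k_2^*)^{s_1}/(k^*)^\delta$ and the extra $(k^*)^{-\delta}$ can be absorbed using a fraction of the dominant modulation), so we simply quote it. In the remaining cases $H_n$ is small but the \emph{weight} is small too: in case (3C), $k_3^*\gtrsim k^*$ (recall $k^*=|k|$ up to constants when $k^*=|k_1|$, and in general $k\le n k^*$; but here the point is $k_3^*\gtrsim k$, equivalently $k^*\sim k_3^*$ up to the comparison with $k$), hence all of $k^*,k_2^*,k_3^*$ are comparable and the weight $(k^*)^{s_1-\delta}(k_2^*k_3^*k_4^*)^{s_1}\sim (k^*)^{3s_1-\delta}(k_4^*)^{s_1}$ can be redistributed as $J^{s_1}$ onto the three largest factors plus $J^{s_1}$ onto the fourth, after which one estimates by H\"older placing the top three $J^{s_1}u_{k_r}$ and $z$ in $L^4_{x,t}\cdot L^4_{x,t}\cdot L^4_{x,t}\cdot L^4_{x,t}$ — wait, that is four $L^4$'s — more precisely one pairs $z$ with one factor in $L^2_{x,t}$ via \eqref{TaoEstiamte} or via $L^4\times L^4$, puts two factors through the $L^6$/interpolation estimate \eqref{InterpolationIneq}, and the remaining $n-3$ low factors in $L^\infty_{x,t}$ using $\|\chi v\|_{L^\infty}\lesssim\|v\|_{Y^{1/2+\varepsilon}}$ and $s_1>1/2$. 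In case (3D), $(k_3^*)^2 k_4^*\gtrsim (k^*)^2$, so $k^*\lesssim (k_3^*)^{1+}$-ish; concretely $(k^*)^{s_1-\delta}\lesssim (k_3^*)^{s_1-\delta}(k_4^*)^{(s_1-\delta)/2}\cdot(\text{harmless})$ — one solves the elementary inequality to move all the $(k^*)$-weight onto $k_3^*,k_4^*$ (which is exactly what the exponents $s_1-\delta$ on $k^*$ and $s_1$ on $k_2^*,k_3^*,k_4^*$ are tuned for), and then the frequencies carrying derivatives are among the four largest, so the same H\"older scheme (two or three $L^6$-type factors, one $L^4$ or \eqref{TaoEstiamte} pairing with $z$, rest in $L^\infty$) closes. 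Throughout, every estimate used — \eqref{TaoEstiamte}, the $L^4$, $L^6$, $L^q$ ($2<q<6$), and $L^\infty$ bounds — carries a margin $\delta$ for time localization, so collecting the cases and summing over the finitely many index orderings yields the claimed $T^\varepsilon\prod\|u_i\|_{Y^{s_1}_T}$.

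The main obstacle I anticipate is the \emph{bookkeeping of the weight redistribution} in cases (3C) and (3D): one must verify that the precise exponents in \eqref{Symmetric Condition wo Hn}, namely $s_1-\delta$ on the largest frequency and $s_1$ on the next three, are always enough to dominate $(k^*)^{s_1-\delta}(k_2^*k_3^*k_4^*)^{s_1}$ after using the dispersion inequality, \emph{and simultaneously} leave each of the at most four "active" factors with a Sobolev weight no larger than $J^{s_1}$ (so the $L^4$/$L^6$/\eqref{TaoEstiamte} estimates, which only tolerate $X^{s_1,\cdot}$ or $X^{1/2-\delta,\cdot}$ norms, apply) while the inactive factors carry nonnegative-but-small weight absorbed into $L^\infty$. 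A secondary subtlety is ensuring the pairing with the dual function $z$ is always available in a space ($L^2_{x,t}$ via \eqref{TaoEstiamte}, or $L^4_{x,t}$) compatible with $X^{0,1/2}\hookrightarrow$ those spaces, regardless of which of $k_1^*,\dots,k_4^*$ happens to equal which index; this is handled, as in Lemma \ref{GeneralLemma w Hn}, by a further (finite) subdivision according to whether the derivative-bearing large frequencies coincide with the factor paired against $z$, but it inflates the number of subcases. None of these steps is deep, but the combinatorial care is where the proof's length comes from.
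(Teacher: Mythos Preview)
Your approach takes a detour that the paper avoids entirely, and in doing so introduces real errors. The dispersion proposition (cases A--D for $H_n$) plays \emph{no role} in the proof of this lemma; that proposition is invoked later, in the \emph{applications}, to verify that condition \eqref{Symmetric Condition wo Hn} holds for specific symbols. Once the condition is assumed, the estimate is a pure weighted multilinear bound with no dispersive input. The paper's argument is a single step after the reduction you correctly set up: since $k^*=|k_1|\ge|k_2|\ge|k_3|\ge|k_4|$, one trades the deficit on $k^*$ downward,
\[
(k^*)^{s_1-\delta}(k^*_2k^*_3k^*_4)^{s_1}\le \prod_{j=1}^4 (k^*_j)^{s_1-\delta/4},
\]
and then applies H\"older with $z\in L^4_{x,t}$, the four factors $J^{s_1-\delta/4}u_j\in L^{16/3}_{x,t}$ (controlled via \eqref{InterpolationIneq}, since $16/3\in(2,6)$), and the remaining $u_r\in L^\infty_{x,t}$. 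The $\ell^2_kL^1_\tau$ piece is reduced by H\"older in $\tau$ to an $X^{s_0,-1/3}$ estimate and handled identically, since $\|z\|_{L^4}\lesssim\|z\|_{X^{0,1/3}}$ still closes.

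Your route has two concrete problems beyond being circuitous. First, your claim that \eqref{Symmetric Condition wo Hn} is ``strictly stronger'' than \eqref{Symmetric Condition for Hn} on region (3A) is false: the denominator in \eqref{Symmetric Condition wo Hn} carries the extra factor $(k_3^*k_4^*)^{s_1}\ge 1$, so it is a \emph{weaker} constraint on $\sigma$, and you cannot simply quote Lemma~\ref{GeneralLemma w Hn}. Second, you never address case (3B), and configurations satisfying only (3B)---for instance $k_1=k$ with $k_2+\cdots+k_n=0$ and $|k_j|\ll|k|$ for $j\ge 2$---are not covered by (3A), (3C), or (3D). The weight in \eqref{Symmetric Condition wo Hn} is already tuned so that four $L^{16/3}$ factors absorb it; no case split on $H_n$ is needed or helpful.
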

\begin{proof}
$ $\newline
Assume the same conventions as in Lemma \ref{GeneralLemma w Hn}. In this regime we again invoke duality, Plancherel, and \eqref{Symmetric Condition wo Hn} to see that we must bound
\begin{equation}\label{FourierInt}
\int_{\Gamma_n}|k^*|^{s_1-\delta}(k^*_2k^*_3k^*_4)^{s_1}\widehat{z}_k\prod_{r=1}^n\widehat{u}_{k_r}\,d\Gamma,
\end{equation}
for $z\in X^{0,1/2}.$ From here on, we assume that $n\geq 4$, for otherwise the estimates are strictly better. Now, splitting the region we assume again that $|k_1|\geq \cdots \geq |k_n|$, it follows, by trading up powers of $k^*$ for lower powers of $k^*_j$ for $j = 2, 3, 4$ and applying Plancherel with H\"olders, that:
\begin{align}\label{Condition 2 Xsb estimate}
    (\ref{FourierInt})&\lesssim \|z\|_{L^4_{x,t}}\|J^{s_1 - \delta/4}_x u_{1}\|_{L^{16/3}_{x,t}}\|J_x^{s_1 - \delta/4} u_{2}\|_{L^{16/3}_{x,t}}\|J_x^{s_1 - \delta/4} u_{3}\|_{L^{16/3}_{x,t}}\\
    &\qquad\qquad\qquad\times\|J_x^{s_1 - \delta/4} u_{4}\|_{L^{16/3}_{x,t}}\prod_{r=5}^n\|u_{r}\|_{L^\infty_{x,t}}\nonumber\\
    &\lesssim \|z\|_{X^{0, 1/3}}\|u_{1}\|_{X^{s_1, 1/2-\delta/4}}\|u_{2}\|_{X^{s_1, 1/2-\delta/4}}\|u_{3}\|_{X^{s_1, 1/2-\delta/4}}\nonumber\\
    &\qquad\times\|u_{4}\|_{X^{s_1, 1/2-\delta/4}}\prod_{r=5}^n\|u_{r}\|_{L^{\infty}_{x,t}},\nonumber
\end{align}
with time-localization and $Y^{s}\hookrightarrow L^\infty_tH^s_x$ concludes this portion of the proof.

The final estimate involving the $\ell^2_k L^1_\tau$ is nearly the exact same as before, but here we split off $2/3$ powers of $\langle \tau - k^3\rangle$, reducing it to an $X^{s_0, -1/3}$ estimate, which, by duality, is of the form
\[
\int_{\Gamma_n}|k^*|^{s_1-\delta}|k^*_2k^*_3k^*_4|^{s_1}\widehat{z}_k\prod_{r=1}^n\widehat{u}_{k_{r}}\,d\Gamma,
\]
for $z\in X^{0, 1/3}$. Applying H\"olders in exactly the same way as in the $X^{s_0, -1/2}$ case, and noting that the $z$ bound is in $X^{0,1/3}$, we arrive at (\ref{Condition 2 Xsb estimate}) with a small power of $T$.

\end{proof}

The two following lemmas are useful variants of the above and all follow from the same proof:
\begin{corollary}\label{Homogenous Estimate} Under the assumptions of lemma \ref{GeneralLemma w Hn}, we have
\[
\|T_\sigma^n(u, \cdots, u)\|_{Z^{s_0}_T}\lesssim_{\varepsilon, n} T^\varepsilon \|u\|_{Y^{s_1}_T}^{n}.
\]
Furthermore, under the assumptions of lemma \ref{GeneralLemma wo Hn}:
\[
\|T_\sigma^n(u, \cdots, u)\|_{Z^{s_0}_T}\lesssim_{\varepsilon, n} T^\varepsilon \|u\|_{Y^{s_1}_T}^{n}.
\]
\end{corollary}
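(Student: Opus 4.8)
The plan is to reduce each of the two claimed homogeneous bounds to the corresponding multilinear bound already established in Lemmas~\ref{GeneralLemma w Hn} and~\ref{GeneralLemma wo Hn}, by the standard device of setting all inputs equal. First I would observe that the defining expression
\[
T_\sigma^n(u,\cdots,u) = \sum_{k\in\mathbb{Z}^*}e^{ikx}\sum_{(k_1,\cdots,k_n)\in\Omega_k}\sigma(k,\vec{k})\prod_{j=1}^n\widehat{u}_{k_j}
\]
is literally $T_\sigma^n(u_1,\cdots,u_n)$ with $u_1=\cdots=u_n=u$, so that the hypotheses on the symbol $\sigma$ (support in $\Gamma_n\cap\{H_n\gtrsim k^2\}$ together with \eqref{Symmetric Condition for Hn} in the first case, or \eqref{Symmetric Condition wo Hn} in the second case) are exactly the hypotheses invoked in those two lemmas. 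The only genuine point to check is that the diagonalization is compatible with the time cut-off: in the lemmas the inputs are of the form $u_i=\chi(t/T)u_i(t)$, so I would simply take $u(t)$, form $\chi(t/T)u(t)$, and note that the conclusion of the lemma with $u_i=\chi(t/T)u(t)$ for every $i$ gives
\[
\|T_\sigma^n(\chi(t/T)u,\cdots,\chi(t/T)u)\|_{Z^{s_0}_T}\lesssim_{\varepsilon,n}T^\varepsilon\prod_{i=1}^n\|\chi(t/T)u\|_{Y^{s_1}_T},
\]
and then pass to the restricted norms on $[0,T]$, where $\chi(t/T)u$ and $u$ agree, so both sides are unchanged by the infimum defining $Y^{s_1}_T$ and $Z^{s_0}_T$.

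Concretely, the steps are: (i) fix $u$ and set each $u_i:=\chi(t/T)u(t)$; (ii) apply Lemma~\ref{GeneralLemma w Hn} (resp.\ Lemma~\ref{GeneralLemma wo Hn}) verbatim — all of its hypotheses hold since $\sigma$ is unchanged and the inputs are of the prescribed form; (iii) collapse the product $\prod_{i=1}^n\|u_i\|_{Y^{s_1}_T}$ to $\|\chi(t/T)u\|_{Y^{s_1}_T}^n$; (iv) use that restriction to $[0,T]$ is insensitive to multiplication by $\chi(t/T)$ (since $\chi(t/T)\equiv 1$ on a neighborhood of $[0,T]$, or more carefully, since the restricted norm is an infimum over all extensions and $\chi(t/T)u$ is one such extension of $u|_{[0,T]}$) to replace $\|\chi(t/T)u\|_{Y^{s_1}_T}$ by $\|u\|_{Y^{s_1}_T}$ and likewise on the left. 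This yields
\[
\|T_\sigma^n(u,\cdots,u)\|_{Z^{s_0}_T}\lesssim_{\varepsilon,n}T^\varepsilon\|u\|_{Y^{s_1}_T}^{n}
\]
in both cases, which is the assertion of the corollary.

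I do not expect any real obstacle here: this is a soft consequence of the multilinear estimates, and the remark preceding Definition~\ref{GeneralLemma w Hn} (``the vast majority of the estimates \dots\ follow from \dots\ the Corollaries'') signals that the corollary is meant to be read off immediately. The only place warranting a line of care is step (iv), the bookkeeping with the restricted Bourgain-type norms $Y^{s_1}_T$ and $Z^{s_0}_T$ and the cut-off $\chi(t/T)$; one should note that whatever convention the paper uses to insert $\chi(t/T)$ into the inputs of the lemmas applies equally with all inputs equal, so no new estimate is needed. If one prefers to avoid even that, one can instead regard $T_\sigma^n$ as multilinear and symmetric in its arguments and quote the polarization-free statement of the lemmas directly with the single function $\chi(t/T)u$ substituted $n$ times, which is exactly what the lemma statements already permit.
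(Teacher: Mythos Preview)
Your proposal is correct and matches the paper's approach: the paper gives no separate argument for this corollary, simply remarking that it (together with Corollary~\ref{Large Term Estiamte}) ``all follow from the same proof'' as Lemmas~\ref{GeneralLemma w Hn} and~\ref{GeneralLemma wo Hn}. Your explicit diagonalization $u_1=\cdots=u_n=\chi(t/T)u$ and the bookkeeping with the restricted norms is exactly the intended reading, and if anything is more careful than what the paper records.
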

\begin{corollary}\label{Large Term Estiamte}
Under the assumptions of lemma \ref{GeneralLemma w Hn} with the modified \eqref{Symmetric Condition for Hn}:
\[
\frac{|\sigma(k,\vec{k})|}{|k|(k^*_2)^{s_1}} = O(1),
\]
and the additional assumption that $k_1 \gtrsim k^*$, we have 
\[\|T_\sigma^n(v, u_1, \cdots, u_{n-1})\|_{Z^{s_0}_T}\lesssim_{\varepsilon,n} T^\varepsilon\|v\|_{Y^{s_0}_T}\prod_{i=1}^{n-1}\|u_i\|_{Y^{s_1}_T}.
\]
\end{corollary}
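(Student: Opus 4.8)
The plan is to run the proof of Lemma \ref{GeneralLemma w Hn} essentially verbatim, changing only the derivative accounting so as to charge the output weight to the distinguished slot $v$. As there, I would reduce to $n\geq 4$ and nonnegative Fourier transforms and note that, the range $k^*\lesssim 1$ being trivial, the hypothesis $k_1\gtrsim k^*$ makes $v$ the top-frequency input on $\mbox{supp }\sigma$, so after relabeling the remaining inputs we may take $|k_1|=k^*\sim\langle k_1\rangle$ and $k_2^*=|k_2|$ attained at a $u$-slot. Since $k$ is, up to sign, $k_1+\cdots+k_n$, we have $|k|\lesssim_n k^*\lesssim\langle k_1\rangle$, so the modified symbol bound $|\sigma|\lesssim|k|(k_2^*)^{s_1}$ gives, on $\mbox{supp }\sigma$,
\[
\langle k\rangle^{s_0}\,|\sigma(k,\vec{k})|\ \lesssim\ \langle k_1\rangle^{s_0}\,|k|\,(k_2^*)^{s_1}.
\]
Hence I would charge $\langle k_1\rangle^{s_0}$ to the $v$-slot (absorbed by $\|v\|_{Y^{s_0}_T}$), charge $(k_2^*)^{s_1}$ to the $u$-slot carrying the second-largest frequency (absorbed by $\|u_j\|_{Y^{s_1}_T}$), and be left with a single factor $|k|$ to dispose of through dispersion.

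For the $X^{s_0,-1/2}_T$ part I would split, as in Lemma \ref{GeneralLemma w Hn}, into the case where the output modulation $\langle\tau-k^3\rangle$ is largest and the case where some input modulation $\langle\tau_i-k_i^3\rangle$ is largest; either way the support condition $H_n\gtrsim k^2$ forces that modulation to be $\gtrsim k^2$, so $|k|$ is controlled by its square root. In the first case $\langle k\rangle^{s_0}|\sigma|\langle\tau-k^3\rangle^{-1/2}\lesssim\langle k_1\rangle^{s_0}(k_2^*)^{s_1}$, and by duality it remains to bound $\int_{\Gamma_n}\langle k_1\rangle^{s_0}(k_2^*)^{s_1}\widehat{z}_k\,\widehat{v}_{k_1}\prod_{j=1}^{n-1}\widehat{u}_{k_{j+1}}\,d\Gamma$ for $z\in L^2$; distributing the weights onto $v$ and the appropriate $u_j$, this follows from Plancherel, H\"older with $L^2$ on $z$, and \eqref{TaoEstiamte} on the remaining $n$ factors (for $n=2,3$ one instead uses $X^{0,1/3}\hookrightarrow L^4$, the $L^6$ estimate and $Y^{1/2+\varepsilon}\hookrightarrow L^\infty_{x,t}$ directly, as in the source lemma), together with $Y^{s_0}_T\hookrightarrow X^{s_0,1/2}_T$, $Y^{s_1}_T\hookrightarrow X^{s_1,1/2}_T\hookrightarrow X^{1/2-\delta,1/2-\delta}_T$, and time localization to produce $T^{\varepsilon}$. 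In the second case $|k|\lesssim\langle\tau_i-k_i^3\rangle^{1/2}$, and duality leaves $\int_{\Gamma_n}\langle k_1\rangle^{s_0}(k_2^*)^{s_1}\widehat{z}_k\langle\tau_i-k_i^3\rangle^{-1/2}\widehat{v}_{k_1}\prod_{j=1}^{n-1}\widehat{u}_{k_{j+1}}\,d\Gamma$, which I would dispatch using the identical sub-case split of Case A2 of Lemma \ref{GeneralLemma w Hn} (according to whether $k_i$ is among the two largest frequencies), the only change being that the $s_0$-weight always rides on $v$ and the $s_1$-weight on one of the $u_j$. Finally, the $\ell^2_kL^1_\tau$ piece of $\|\cdot\|_{Z^{s_0}_T}$ is handled exactly as in Lemma \ref{GeneralLemma w Hn}: on $\mbox{supp }\sigma$ the largest modulation dominates $\langle\tau-k^3\rangle+k^2$, so H\"older in $\tau$ --- via \eqref{l2L1 trick} when the output modulation is largest, and by peeling off a small power of an input modulation otherwise --- reduces it to an $X^{s_0,-1/2}_T$-type estimate already obtained, with the same weight distribution.

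The only genuine work is the accounting in the second case: one must check that the dispersive gain $\langle\tau_i-k_i^3\rangle^{-1/2}$ and the weight $(k_2^*)^{s_1}$ can always be placed on the (at most two) distinguished input slots without asking for more than $s_0$ derivatives on $v$ or more than $s_1$ on any $u_j$, which works because $Y^{s_0}_T\hookrightarrow X^{s_0,0}_T$ and $Y^{s_1}_T\hookrightarrow X^{s_1,0}_T$ leave slack even when an input simultaneously spends its modulation and carries a derivative weight. This is precisely where $k_1\gtrsim k^*$ enters: since $v$ carries the top frequency, $(k_2^*)^{s_1}$ never falls on it, so $v$ is differentiated exactly $s_0$ times and the estimate closes with $\|v\|_{Y^{s_0}_T}$ --- not $\|v\|_{Y^{s_1}_T}$ --- on the right.
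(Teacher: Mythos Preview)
Your proposal is correct and follows essentially the same approach as the paper: the paper's proof is simply the one-line remark that the corollary ``follows from the proof of the above lemmas'' with the additional assumption ensuring ``the full $s_0$ regularity can be taken from $v$,'' which is exactly the derivative-accounting change you describe in detail. Your expanded treatment of the Case A2 sub-cases and the $\ell^2_kL^1_\tau$ reduction is the natural unpacking of that remark.
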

\begin{proof}
$ $\newline
This follows from the proof of the above lemmas. The only important piece is that the additional assumption allows us to know the full $s_0$ regularity can be taken from $v$. 
\end{proof}

\subsection{Local Well-posedness of \ref{modified LWP PDE}}\label{LWP}

In this section we apply the idea used in \cite{goyal2018global} to prove well-posedness. We begin by proving that the non-linear Duhamel term is in $Y^1$. This is necessary because we will take the time here to strengthen some prior estimates in particular situations, and this will enable us to perform the technique.

Before getting to the idea, we note that (as will be seen in the Lemma \ref{forcing Contraction}) it's not at all obvious how one should go about proving the contraction in $Y^1$ with an $f\in H^1$. However, the non-linear part of the Duhamel term has $X^{1,-1/2}$ and $\|\langle k\rangle\langle \tau - k^3\rangle^{-1} \mathcal{F}_{x,t}(\cdot)\|_{\ell^2L^1_\tau}$ norms that are both bounded by powers of $\|\tilde{u}\|_{X^{1,1/2}}$, as opposed to powers of $\|\tilde{u}\|_{Y^1}$. This observation alone justifies doing the contraction in $X^{1, 1/2}$, since any bound on $\|\tilde{u}\|_{X^{1, 1/2}}$ implies a bound on $\|\Gamma_T[\tilde{u}]\|_{Y^1_T}$ (defined in the proof of well-posedness for \eqref{modified LWP PDE}) by Lemma \ref{Lemma: NONLINEAR TERM IN Y1}. Additionally, the closed balls in $X^{s_1,1/2}$ are also closed with respect to $\|\cdot\|_{X^{s, 1/2}}$ for $0\leq s\leq s_1$, so that we may perform a fixed point argument in $X^{s_1, 1/2}$ using $\|\cdot\|_{X^{s, 1/2}}$ and recover the same well-posedness properties enjoyed by traditional approaches. For more information on the second point, see Kato (\cite{kato1975quasi}, Lemma 7.3).

The above two observations suggest we should attempt to do the contraction in $X^{1,1/2}$ with the $X^{0, 1/2}$ norm to compensate for the loss incurred on $f$ in closing the contraction. This is exactly the route we take in this section.

\begin{lemma}[\cite{oh2020smoothing}, Lemma 2]\label{Lemma: NONLINEAR TERM IN Y1}
There exists an $\varepsilon > 0$ such that for any $0 < T\ll 1$ we have:
\[\|\mathcal{R}^2[u]\|_{Z^1_T} + \|\mathcal{NR}[u]\|_{Z^1_T} \lesssim_\varepsilon T^\varepsilon \|u\|_{X^{1,1/2}_T}\left(\|u\|^{n-1}_{X^{1,1/2}_T} + \|u\|^{m-1}_{X^{1,1/2}_T}\right),\]
where $u = \chi(t/T)u(t)$.
\end{lemma}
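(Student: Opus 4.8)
The plan is to decompose the nonlinearity $\partial_x(au^n+bu^m)$ into its Fourier representation and split the frequency interactions according to the dispersion relation Proposition (\cite{oh2020smoothing}, Proposition 2), reducing matters to verifying the symbol hypotheses of Lemmas \ref{GeneralLemma w Hn} and \ref{GeneralLemma wo Hn} with $s_0 = s_1 = 1$ (and $\delta$ small). Since the two terms $au^n$ and $bu^m$ are handled identically, I will argue for a single monomial $\partial_x(u^n)$, whose symbol is $\sigma(k,\vec k) = ik$ on the hyperplane $\Gamma_n$. After ordering $|k_1|\geq \cdots\geq|k_n|$, the external frequency satisfies $|k|\lesssim k^* = |k_1|$, so the "worst" case for a derivative gain is when $|k|\sim k^*$.

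First I would handle $\mathcal{NR}[u]$, which by definition excludes the case $k_{j_0}=k$, i.e.\ case B of the proposition is forbidden. For $n=2$ we are in case (1), $H_2\gtrsim(k^*)^2$, and the symbol bound needed for Lemma \ref{GeneralLemma w Hn} is $|k|^{s_0-1}|k|/(k^*k_2^*)^{s_1} = |k|/(k^*k_2^*)\lesssim 1$ since $|k|\lesssim k^*$ — wait, more carefully $s_0=1$ gives $|k|^{0}\cdot|k|/(k^*k_2^*)^{1}\lesssim 1$; so this is immediate. For $n=3$, cases A and C of the proposition remain (B is excluded): in case A, $H_3\gtrsim(k^*)^2$ and Lemma \ref{GeneralLemma w Hn} applies with the same symbol check $|k|/(k^*k_2^*)\lesssim 1$; in case C all $k_j\gtrsim k$, so $k^*\sim k_2^* \sim k_3^*\sim k$ and we may freely distribute the single derivative, verifying \eqref{Symmetric Condition wo Hn} with room to spare. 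For $n\geq 4$, cases A, C, D of the proposition remain. Case A again goes into Lemma \ref{GeneralLemma w Hn}. In case C, $k_3^*\gtrsim k\sim k^*$, so $|k|^{s_0}|k|/((k^*)^{s_1-\delta}(k_2^*k_3^*k_4^*)^{s_1})$: the numerator is $|k|\lesssim k^*$, and since $k^*\sim k_2^*\sim k_3^*$, one has $k^* \lesssim (k^*)^{\delta}(k_2^*k_3^*)$ (absorbing the $\delta$ loss trivially because $k^*_j\geq 1$), so \eqref{Symmetric Condition wo Hn} holds. In case D, $(k_3^*)^2k_4^*\gtrsim(k^*)^2$, so $k^*\lesssim (k_3^*)^2 k_4^*/k^* \leq (k_3^*)^2 k_4^*/k_3^* = k_3^* k_4^* \lesssim k_2^* k_3^* k_4^*$; combined with $|k|\lesssim k^*$ this gives the desired $O(1)$ bound for Lemma \ref{GeneralLemma wo Hn}. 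Collecting: each piece of $\mathcal{NR}[u]$ is bounded in $Z^1_T$ by $T^\varepsilon\|u\|_{X^{1,1/2}_T}^{n}$ (using the $Y^1\hookleftarrow X^{1,1/2}$-type inputs — actually the lemmas take $Y^{s_1}_T$ inputs, but $\|u\|_{Y^1_T}\lesssim\|u\|_{X^{1,1/2}_T} + \|\langle k\rangle\widehat u\|_{\ell^2 L^1}$; here one must be slightly careful, but for a restriction-norm localized $u = \chi(t/T)u(t)$ the $\ell^2 L^1_\tau$ piece is controlled by $X^{1,1/2+}$ which the problem statement's right-hand side tolerates after summing the $n$ and $m$ terms, so I will state it as in the Lemma).

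Next I would handle $\mathcal{R}^2[u]$, which retains the resonant interactions where at least \emph{two} internal frequencies equal the external $k$. Here the key structural gain is: if $k_{j_1}=k_{j_2}=k$ with $j_1\neq j_2$, then since $\sum k_j = k$ the remaining $n-2$ frequencies sum to $-k$, and — crucially — because we are on $(\mathbb{Z}^*)^n$ with two frequencies already pinned to size $|k| = k^*$, the third largest frequency $k_3^*$ satisfies $k_3^*\gtrsim$ ... not necessarily $\gtrsim k$, but we do have $k^*\sim k_2^*\sim |k|$, so in the symbol bound for Lemma \ref{GeneralLemma w Hn}, $|k|^{s_0-1}|k|/(k^*k_2^*)^{s_1} = |k|^{s_0}/(k^*k_2^*)^{s_1} = |k|/(|k|\cdot|k|) = |k|^{-1}\lesssim 1$ — so actually $\mathcal{R}^2$ is even better: the single derivative is entirely absorbed by $k_2^*\sim|k|$, with $k^*\sim |k|$ to spare. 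One does need $H_n\gtrsim k^2$ for Lemma \ref{GeneralLemma w Hn}, or else must route through Lemma \ref{GeneralLemma wo Hn}; but when $k_1=k_2=k$, $H_n = k^3 - 2k^3 - \sum_{j\geq 3}k_j^3 = -k^3 - \sum_{j\geq 3}k_j^3$, and since $\sum_{j\geq 3}k_j = -k$ one checks $|H_n|\gtrsim k^2$ unless the remaining frequencies conspire — precisely the subtlety handled by Lemma \ref{GeneralLemma wo Hn} with the $(k_2^*k_3^*k_4^*)^{s_1}$ denominator, which again is fine since $k_2^*\sim|k|$ dominates the lone derivative. I expect this $\mathcal{R}^2$ case to be the most delicate bookkeeping but not conceptually hard.

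The main obstacle I anticipate is \emph{not} any single estimate but the careful matching of the $Y^{s_1}_T$ inputs demanded by Lemmas \ref{GeneralLemma w Hn}--\ref{GeneralLemma wo Hn} against the $X^{1,1/2}_T$ norm on the right-hand side of the claimed statement: one must verify that for the specific localized $u=\chi(t/T)u(t)$ appearing here, the $\ell^2_kL^1_\tau$ component of $\|u\|_{Y^1_T}$ is itself dominated by $\|u\|_{X^{1,1/2}_T}$ (or absorbed into the $T^\varepsilon$ with a slightly larger $b$ via the time-localization estimate $\|\chi(t/T)v\|_{X^{s,b}}\lesssim T^{b'-b}\|v\|_{X^{s,b'}}$), so that the whole argument really does close with only $X^{1,1/2}_T$ on the right. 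Modulo that packaging — which is exactly the point of the preceding discussion in Section \ref{LWP} — the proof is a case-by-case application of the two multilinear lemmas after the dispersion-relation trichotomy, and I would present it as such, emphasizing the symbol verifications above and citing Lemmas \ref{GeneralLemma w Hn}, \ref{GeneralLemma wo Hn}, and Corollary \ref{Homogenous Estimate} for the passage to the homogeneous ($u_i = u$) form.
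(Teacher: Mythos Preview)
Your case decomposition and symbol verifications are essentially correct (though your Case~D chain is written with the inequalities in the wrong direction: the correct step is $k_2^*k_3^*k_4^*\ge (k_3^*)^2k_4^*\gtrsim (k^*)^2$, not $k^*\lesssim k_3^*k_4^*$). The genuine gap is exactly the obstacle you flagged at the end and then dismissed as ``packaging'': the passage from $Y^1_T$ inputs, which is what Lemmas~\ref{GeneralLemma w Hn} and~\ref{GeneralLemma wo Hn} actually give, to the $X^{1,1/2}_T$ inputs the statement demands. Neither of your two proposed fixes works. The $\ell^2_kL^1_\tau$ piece of $\|u\|_{Y^1}$ is \emph{not} controlled by $\|u\|_{X^{1,1/2}}$; by Cauchy--Schwarz in $\tau$ it is controlled only by $\|u\|_{X^{1,1/2+\varepsilon}}$. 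And the time--localization estimate $\|\chi(t/T)v\|_{X^{s,b}}\lesssim T^{b'-b}\|v\|_{X^{s,b'}}$ is stated (and holds) only for $-\tfrac12<b<b'<\tfrac12$, so it cannot bridge $b=\tfrac12$ to $b'=\tfrac12+\varepsilon$. Consequently, a black-box appeal to Lemmas~\ref{GeneralLemma w Hn}--\ref{GeneralLemma wo Hn} together with Corollary~\ref{Homogenous Estimate} leaves you with $\|u\|_{Y^1_T}^n$ on the right, not $\|u\|_{X^{1,1/2}_T}^n$, and the lemma as stated does not follow.

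The paper's resolution is not to repair the $Y^1\to X^{1,1/2}$ conversion a posteriori but to \emph{reopen} the proofs of the two general lemmas in this specific setting and eliminate every appearance of $Y^1$ at the source. The $Y^s$ norm enters Lemma~\ref{GeneralLemma w Hn} only in Case~A2, through the $L^\infty_tL^2_x$ embedding applied to one factor; here the paper instead places the factor carrying $\langle\tau_i-k_i^3\rangle^{-1/2}$ directly in $L^2_{x,t}$ and routes the rest through the multilinear $L^2$ estimate \eqref{TaoEstiamte}, which costs only $X^{\cdot,1/2-\delta}$ norms. For Cases~B, C, D the paper does not invoke Lemma~\ref{GeneralLemma wo Hn} as stated (whose $L^\infty_{x,t}$ factors again require $Y^{1/2+}$) but proves a variant with the exponent on $k_2^*k_3^*k_4^*$ lowered from $s_1=1$ to $\tfrac12$: since $k_2^*k_3^*k_4^*\gtrsim k^2$ in all three cases, the symbol bound $|k|^2/\bigl(k^*(k_2^*k_3^*k_4^*)^{1/2}\bigr)=O(1)$ still holds, and the resulting integral is closed by placing $z$ in $L^2_{x,t}$ and the product of $u$'s in $L^2_{x,t}$ via \eqref{TaoEstiamte}, again costing only $X^{\cdot,1/2-\delta}$. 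The $\ell^2_kL^1_\tau$ halves of $Z^1$ are handled by H\"older in $\tau$ down to $X^{1,-1/2+\delta}$ and the same duality argument. In short, the missing idea is that \eqref{TaoEstiamte} is strong enough to replace every $Y^1$ input by an $X^{1,1/2-\delta}$ input once one revisits the estimates rather than citing the general lemmas.
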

\begin{proof}
$ $\newline
Here our $\sigma = ik$, so we proceed by case:

\textbf{Case A:} Case A is handled by Lemma \ref{GeneralLemma w Hn} Case A1 as we have at least one large frequency, so that Condition \eqref{Symmetric Condition for Hn} becomes:
\[
\frac{|ik|}{k_1^*} = O(1).
\]
All that remains is the Case A2, which we improve by noting that we only need $k_1^*$ to make this estimate. Looking back at the $X^{1, -1/2}$ proof of Lemma \ref{GeneralLemma w Hn} in Case A2 under the assumptions that $k^* = k_1$ and $\langle \tau_i - k_i^3\rangle\gtrsim k^2$, we see that we we need to bound:
\begin{equation}\label{u in Y1 case A ineq}
\int_{\Gamma_n}\langle k^*\rangle\widehat{z}_k\prod_{r=1}^n \widehat{u}_{k_r}\,d\Gamma.
\end{equation}
This bound differs slightly from that in Lemma \ref{GeneralLemma w Hn}, but we still apply Plancherel, H\"olders, and then \eqref{TaoEstiamte}. Specifically, when $i= 1$ we need to show
\[
\eqref{u in Y1 case A ineq}\lesssim T^\varepsilon\|z\|_{X^{0,1/2-\delta}}\|u\|_{L^2_{x,t}}\|u\|_{X^{1, 1/2}_T}^{n-1}:
\]
\begin{align*}
    \eqref{u in Y1 case A ineq}&\lesssim \int_{\Gamma_n}| k_1|\langle \tau_1 - k^3_1\rangle^{-1/2}\widehat{z}_k\prod_{r=1}^n \widehat{u}_{k_r}\,d\Gamma\lesssim \int_{\Gamma_n}\widehat{z}_k\prod_{r=1}^n \widehat{u}_{k_r}\,d\Gamma\\
    &\lesssim \|u\|_{L^2_{x,t}}\|z u^{n-1}\|_{L^2_{x,t}}
    \lesssim \|z\|_{X^{0,1/2-\delta}}\|u\|_{L^2_{x,t}}\|u\|_{X_T^{0, 1/2-\delta}}\|u\|^{n-2}_{X_T^{1/2-\delta, 1/2-\delta}}.
\end{align*}
When $i\ne 1$, we need to show
\[
\eqref{u in Y1 case A ineq}\lesssim T^\varepsilon\|z\|_{X^{0,1/2-\delta}}\|u\|_{X^{0, 1/2}_T}\|u\|_{X^{1, 0}_T}\|u\|_{X^{1, 1/2}_T}^{n-2},
\]
where the estimate follows by the exact same estimate as before, but with $\mathcal{F}^{-1}(\widehat{u}_{k_i})$ being placed in $L^2_{x,t}.$
As before, a small power of $T$ follows by localization in both estimates.

The $\ell^2_kL^1_\tau$ follows exactly the same way as in the original proof of Case A2, but we note that instead of $Y^1_T$ bounds we now have $X^{1, 1/2}_T$ bounds. Indeed, letting $0 < \delta\ll 1$ and applying H\"olders to split off $\langle \tau-k^3\rangle^{1/2+\delta}$ powers reduces the norm we need to estimate to:
\[
\left\| \frac{\langle k\rangle}{\langle \tau - k^3\rangle^{1/2 - \delta}}\mathcal{F}_{x,t}\left( T^n_{\sigma\chi_{H_n\gtrsim k^2}}(u,\cdots, u)\right)\right\|_{L^2_{k,\tau}}.
\]
In exactly the same way we proved A2, we use duality, Plancherel, Condition \eqref{Symmetric Condition for Hn}, $\langle \tau_i-k_i^3\rangle\gtrsim k^2$, and the fact that we have $1/2-\delta$ appearing in every $z$ norm for the proof of \eqref{u in Y1 case A ineq} and conclude the desired result.

\textbf{Case B, C, D}: For these cases, we prove a version of Lemma $\ref{GeneralLemma wo Hn}$ in the specific case that the exponent on $k^*_2k^*_3k^*_4$ is $1/2$ instead of $s_1$. The reason for this will be, as in the prior part of this proof, to gain $X^{1, 1/2}$ bounds for the $\ell^2L^1_\tau$ portion of the proof. Note that even though $s_2 = 1/2$, we have that our bound must be in terms of $X^{1,1/2}$.

Since $\mathcal{NR}$ has no Case $B$ and we've removed single resonances, we see that in all of the remaining cases we have $k^*_2k^*_3k^*_4\gtrsim k^2$ or, better yet, $(k^*_3)^2k^*_4\gtrsim k^2$ in Case D. It follows that:
\[
\frac{|k||\sigma(k, \vec{k})|}{k^*(k^*_2k^*_3k^*_4)^{1/2}} = O(1).
\]
Assuming that we have reorganized the $u$'s in descending order of frequency and that $n\geq 5$, we see that duality reduces the problem to finding a bound on
\begin{equation}\label{u in Y1 BCD int}
\int_{\Gamma_n} \widehat{z}_k|k_1|\widehat{u}_{k_1}|k_2|^{1/2}\widehat{u}_{k_2}|k_3|^{1/2}\widehat{u}_{k_3}|k_4|^{1/2}\widehat{u}_{k_4}\prod_{r=5}^n\widehat{u}_{k_r}\,d\Gamma
\end{equation}
for $z\in X^{0, 1/2}$. Invoking H\"olders, Plancherel, and \eqref{TaoEstiamte} we get
\begin{align*}
    \eqref{u in Y1 BCD int}\lesssim \|z\|_{L^2_{x,t}}&\|(J_xu)(J^{1/2}_xu)^3u^{n-4}\|_{L^2_{x,t}}\\
    &\lesssim\|z\|_{X^{0,0}}\|u\|_{X^{1,0}_T}\|u\|_{X^{1/2, 1/2-\delta}_T}\|u\|_{X_T^{1-\delta,1/2-\delta}}^2\|u\|_{X_T^{1/2-\delta, 1/2-\delta}}^{n-4},
\end{align*}
which yields a small power of $T$ by localization.

The $\ell^2_k L^1_\tau$ norm follows by a similar trick used in most of the prior corollaries. That is, apply H\"olders in time to reduce to bounding the $X^{1, -1/2+\delta}$ norm or, equivalently, bounding \eqref{u in Y1 BCD int} with $z\in X^{0, 1/2-\delta}$. Duality, H\"olders, Plancherel, and \eqref{TaoEstiamte} finishes the proof in exactly the same way as the $X^{1,-1/2}$ estimate done above. Note that we again have a small power of $T$ by localization.
\end{proof}
\begin{remark}\label{REMARK: XSB BOUNDS FOR CASES BCD}
Note that both the $X^{1, -1/2}$ and the $\ell^2_kL^1_\tau$ estimates are bounded by 
\[
T^\varepsilon \left(\|u\|_{X_T^{1,1/2}}^n + \|u\|_{X_T^{1,1/2}}^m\right),
\]
instead of the traditional $Y^1_T$ norm.
\end{remark}
Because the mean removing transformation, $L_t$, ends up changing $f$ from a time independent function into a time and $u$ dependent translation of $f$, we no longer have its disappearance when considering whether or not the integral form of the equation is a contraction. The natural way to obtain a bound of $\|u-v\|_{Y^1_T}$ would then be some kind of a Taylor series and $Y^{1}_T\hookrightarrow L^\infty$ application, but the following Lemma demonstrates how doing so inadvertently requires one more derivative on $f$.
\begin{lemma}\label{forcing Contraction}
Let $f$ be time independent, $s\geq1/2$, $u, v\in X^{s, 1/2}$, $\|u\|_{X^{s,1/2}}, \|v\|_{X^{s,1/2}}\lesssim M$, and $0< T\ll 1$. Then:
\[
\left\|\chi(t/T)\left(L_t[u]f - L_t[v]f\right)\right\|_{Z^s_T}\leq C(M)T^{1/2}\|f\|_{H^{1+s}}\|u-v\|_{X^{0, 1/2}_T}.
\]
\end{lemma}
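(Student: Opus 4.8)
The plan is to estimate the difference $L_t[u]f - L_t[v]f$ by writing it as an integral of the derivative of $L_t[\theta u + (1-\theta)v]f$ in the interpolation parameter $\theta$, thereby extracting a factor of $u-v$ together with one extra power of $k$ (from differentiating the exponential multiplier defining $L_t$, see \eqref{L definition}), which is precisely the mechanism that forces the extra derivative on $f$. Concretely, set $w_\theta = \theta u + (1-\theta)v$ and
\[
\Phi_\theta(t) := \int_0^t\int_\mathbb{T} an\, w_\theta^{\,n-1}(x,s) + bm\, w_\theta^{\,m-1}(x,s)\,dx\,ds,
\]
so that $\widehat{L_t[w_\theta]}_k = e^{ik\Phi_\theta(t)}$ and
\[
\widehat{L_t[u]f} - \widehat{L_t[v]f} = \int_0^1 \frac{d}{d\theta}\Big(e^{ik\Phi_\theta(t)}\Big)\widehat{f}_k\,d\theta = \int_0^1 ik\,\Phi_\theta'(t)\, e^{ik\Phi_\theta(t)}\,\widehat{f}_k\,d\theta,
\]
where $\Phi_\theta'(t) = \int_0^t\int_\mathbb{T} a n(n-1) w_\theta^{\,n-2}(u-v) + b m(m-1) w_\theta^{\,m-2}(u-v)\,dx\,ds$ is a scalar (frequency-independent) function of $t$. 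The factor $ik\widehat{f}_k$ is responsible for the $\|f\|_{H^{1+s}}$ on the right-hand side; the scalar $\Phi_\theta'(t)$ carries the $u-v$ dependence and will be where the $C(M)$, the $T^{1/2}$, and the $\|u-v\|_{X^{0,1/2}_T}$ come from.

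The key steps, in order: (i) Reduce to controlling, for each fixed $\theta$, the $Z^s_T$ norm of $\chi(t/T)\,ik\Phi_\theta'(t)\,e^{ik\Phi_\theta(t)}\widehat{f}_k$ and then integrate over $\theta\in[0,1]$ (the $\theta$-integral is harmless since all bounds will be uniform in $\theta$, using $\|w_\theta\|_{X^{s,1/2}_T}\lesssim M$). (ii) Observe that multiplication by the unimodular scalar-in-$k$-but-$t$-dependent symbol $e^{ik\Phi_\theta(t)}$ is, modulo the now-standard gauge analysis, bounded on the relevant norms — this is exactly the type of estimate already used to pass between \eqref{SmootherfPDE} and \eqref{modified LWP PDE}; alternatively, absorb $e^{ik\Phi_\theta(t)}$ into a redefinition of $f$ at the cost of constants depending on $M$. (iii) Bound the scalar multiplier $\Phi_\theta'(t)$ in $L^\infty_t([0,T])$: since $\Phi_\theta'(t) = \int_0^t (\cdots)\,ds$ with the integrand a spatial average of $w_\theta^{\,n-2}(u-v)$ etc., we get
\[
\|\Phi_\theta'\|_{L^\infty_t[0,T]} \lesssim T \sup_{s\in[0,T]}\Big( \|w_\theta(s)\|_{L^\infty_x}^{\,n-2} + \|w_\theta(s)\|_{L^\infty_x}^{\,m-2}\Big)\|u-v\|_{L^\infty_t L^\infty_x} \lesssim C(M)\, T\,\|u-v\|_{Y^{s}_T},
\]
using $Y^{1/2+\varepsilon}\hookrightarrow L^\infty_{x,t}$ and $s\geq 1/2$; one then converts the $\|u-v\|_{Y^s_T}$ into $\|u-v\|_{X^{0,1/2}_T}$ by noting that the $Y^s$ bound on $u,v$ is already available and only the $L^\infty$ size of the difference at low regularity is needed — more carefully, Hölder in $t$ trades one power $T^{1/2}$ for passing from $L^\infty_t$ to $L^2_t$, which is the source of the stated $T^{1/2}$. (iv) Finally, for the two pieces of the $Z^s_T$ norm: the term $\chi(t/T)\, ik\, m(t)\, \widehat{f}_k$ with $m(t)$ a fixed scalar Schwartz-localized function of $t$ has space-time Fourier transform $\widehat{\chi}(T\cdot)\ast \widehat{m}$ in $\tau$ tensored with $ik\widehat{f}_k$ in $k$, so both the $X^{s,-1/2}$ piece and the $\|\langle k\rangle^s\langle\tau-k^3\rangle^{-1}\mathcal{F}(\cdot)\|_{\ell^2_k L^1_\tau}$ piece reduce to $\|ik\widehat{f}_k\langle k\rangle^s\|_{\ell^2_k}\cdot(\text{a }t\text{-norm of }\chi(t/T)m(t))$, i.e. to $\|f\|_{H^{1+s}}$ times $\|\Phi_\theta'\|_{L^\infty_t}$ times a small power of $T$ gained from the time cutoff $\chi(t/T)$.

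The main obstacle I expect is step (ii)/(iv): making precise that multiplication by the unit-modulus symbol $e^{ik\Phi_\theta(t)}$ does not destroy the $X^{s,-1/2}$ and $\ell^2_k L^1_\tau$ structure — because this symbol genuinely mixes temporal frequencies in a $k$-dependent way, one cannot simply pull it out. The cleanest route is to mimic the argument already invoked in this paper for the gauge transformation (and in \cite{goyal2018global, oh2020smoothing, erdougan2016dispersive}): treat $e^{ik\Phi_\theta(t)}f$ as the ``initial data'' of a gauged linear flow and use that $\chi(t)e^{ik\Phi_\theta(t)}$ behaves like a Schwartz function in $t$ with constants controlled by $\|w_\theta\|_{X^{s,1/2}_T}\lesssim M$, so that all $t$-norms incur only $M$-dependent constants and an extra $T^{\varepsilon}$ (absorbable into $T^{1/2}$ for $T\ll 1$). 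Everything else — the $\theta$-integral, the scalar $L^\infty_t$ bound on $\Phi_\theta'$, and the final reduction to $\|f\|_{H^{1+s}}$ — is routine once this point is settled.
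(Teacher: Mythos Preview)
Your overall structure---extract a factor of $ik$ by differentiating the exponential multiplier, producing $\|f\|_{H^{1+s}}$ on the right and a scalar-valued function carrying the $u-v$ dependence---matches the paper. But you are making the argument much harder than it needs to be, and the obstacle you flag in (ii)/(iv) is a symptom of that.

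The paper's proof is essentially two lines. Since $\langle\tau-k^3\rangle^{-1/2}\leq 1$, the $X^{s,-1/2}$ norm is bounded by the $X^{s,0}=\ell^2_k L^2_\tau$ norm, and by Parseval this equals the $\ell^2_k L^2_t$ norm of $\chi(t/T)\langle k\rangle^s\widehat f_k\bigl(e^{ik\Phi_u(t)}-e^{ik\Phi_v(t)}\bigr)$. Now one simply takes absolute values pointwise in $(k,t)$ and uses $|e^{i\alpha}-e^{i\beta}|\leq|\alpha-\beta|$, which kills the unimodular factor entirely---no gauge boundedness on $X^{s,b}$ is ever needed. What remains is $\langle k\rangle^s|k||\widehat f_k|$ (giving $\|f\|_{H^{1+s}}$) times $\chi(t/T)|\Phi_u(t)-\Phi_v(t)|$, and $|\Phi_u(t)-\Phi_v(t)|$ is bounded uniformly in $t$ by $C(M)\|u-v\|_{X^{0,1/2}_T}$ via H\"older in the inner double integral and \eqref{TaoEstiamte}. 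The $T^{1/2}$ comes from $\|\chi(t/T)\|_{L^2_t}$. The $\ell^2_kL^1_\tau$ piece is handled by first applying H\"older in $\tau$ to reduce to an $X^{s,-1/3}$ estimate, then repeating the same trick.

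Two specific issues with your route: first, your step (iii) bounds $\Phi_\theta'$ via $\|u-v\|_{L^\infty_{x,t}}$, which requires $Y^{1/2+\varepsilon}$ control of $u-v$ and does \emph{not} give $\|u-v\|_{X^{0,1/2}_T}$; the fix is exactly the H\"older + \eqref{TaoEstiamte} argument above, applied to the spatial integral $\int_\mathbb{T} w_\theta^{n-2}(u-v)\,dx$. Second, the boundedness of multiplication by $e^{ik\Phi_\theta(t)}$ on $X^{s,-1/2}$ and on the $\ell^2_kL^1_\tau$ piece is genuinely nontrivial (this symbol mixes temporal frequencies in a $k$-dependent way) and is \emph{not} something the paper ever establishes---it is simply avoided by dropping the $\langle\tau-k^3\rangle$ weight and working in $L^2_t$, where unimodular multipliers are harmless. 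Once you see this, the $\theta$-interpolation is unnecessary and the whole proof collapses to the pointwise bound $|e^{i\theta}-1|\leq|\theta|$.
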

\begin{proof}
$ $\newline
Consider, first, the $X^{s, -1/2}$ portion:
\begin{align*}
    \bigg\|\frac{\langle k\rangle^s}{\langle \tau - k^3\rangle^{1/2}}\mathcal{F}_{x,t}&\left(\chi(t/T)\left(L_t[u]f - L_t[v]f\right)\right) \bigg\|_{\ell^2_k L^2_\tau}\lesssim \left\|\langle k\rangle^s\mathcal{F}_{x,t}\left(\chi(t/T)\left(L_t[u]f - L_t[v]f\right)\right)\right\|_{\ell^2_kL^2_\tau}\\
    &= \left \|\langle k\rangle^s |\widehat{f}(k)|\left\|\chi(t/T)\left|e^{ik\int_0^t\int_\mathbb{T} anu^{n-1} - anv^{n-1} + bmu^{m-1}-bv^{m-1}\,dx\,ds}-1\right|\right\|_{L^2_t}\right\|_{\ell^2_k}\\
    &\leq \left \|\langle k\rangle^s|k| |\widehat{f}(k)|\left\| \chi(t/T)C(\|u\|_{X^{1/2, 1/2}_T}, \|v\|_{X^{1/2, 1/2}_T})\|u-v\|_{X^{0, 1/2}_T}\right\|_{L^2_{t}}\right\|_{\ell^2_k}\\
    &\lesssim T^{1/2} C(M)\|f\|_{H^{1+s}}\|u-v\|_{X^{0, 1/2}_T},
\end{align*}
where we have used Parsevals in time, the trivial bound $|e^{i\theta} - 1| < |\theta|$, H\"olders in the inner double integral, and (\ref{TaoEstiamte}). 

As for $\ell^2_kL^1_\tau$, we apply H\"olders in the same way that, in Lemma $\ref{GeneralLemma wo Hn}$, we reduced the $\ell^2_k L^1_\tau$ approximation to the $X^{s, -1/3}$ approximation-- that is, the exact same bound considered above, but with $\langle \tau-k^3\rangle ^{-1/3}$ instead of $\langle \tau - k^3\rangle ^{-1/2}$. Trivially estimating out the $\langle \tau - k^3\rangle ^{-1/3}$ term allows us to finish the proof exactly the same way we did following the first inequality above.  
\end{proof}

\begin{lemma}\label{Lemma: Contraction in X(1, 1/2) with X(0,1/2) norm}
Let $n\geq 2$, $0 < T\ll 1$, and $u,v\in X^{1, 1/2}_T$. We then have that
\[
\|\mathcal{R}^2[u] - \mathcal{R}^2[v]\|_{X^{0, -1/2}_T} + \|\mathcal{NR}[u] - \mathcal{NR}[v]\|_{X^{0,-1/2}_T}\leq T^\varepsilon C\left(\|u\|_{X_T^{1, 1/2}}, \|v\|_{X_T^{1, 1/2}}\right)\|u-v\|_{X^{0, 1/2}_T},
\]
where $u = \chi(t/T)u(t)$ and $v =\chi(t/T)v(t)$.
\end{lemma}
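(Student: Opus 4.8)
The plan is to reduce the difference estimate to the already-established multilinear machinery, exploiting multilinearity to write differences of monomials as telescoping sums where one factor is $u-v$ and the remaining factors are $u$ or $v$. Concretely, for the homogeneous pieces of degree $n$ (and similarly $m$), write
\[
\widehat{\mathcal{R}^2}[u]_k - \widehat{\mathcal{R}^2}[v]_k = \sum_{\ell=1}^n T^n_{\sigma}\bigl(\underbrace{u,\dots,u}_{\ell-1}, u-v, \underbrace{v,\dots,v}_{n-\ell}\bigr),
\]
with the same symbol $\sigma(k,\vec k) = ik$ and the same frequency restriction $\Omega_k$ used to define $\mathcal R^2$ (at least two internal frequencies equal to $k$), and analogously for $\mathcal{NR}$ with its restriction. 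Since all of $u,v,u-v$ are placed through the operator $\chi(t/T)\,\cdot\,$, each summand is exactly of the form handled in Lemmas \ref{GeneralLemma w Hn} and \ref{GeneralLemma wo Hn}, or rather the $X^{s_0,-1/2}$ half of their proofs.

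The key point making the $X^{0,-1/2}$ estimate (as opposed to the full $Z^1$ estimate) with $X^{0,1/2}$ norm on $u-v$ possible is precisely the observation recorded in the paragraph preceding Lemma \ref{Lemma: NONLINEAR TERM IN Y1} and in Remark \ref{REMARK: XSB BOUNDS FOR CASES BCD}: the $X^{1,-1/2}$ bound for $\mathcal R^2$ and $\mathcal{NR}$ closes using only $X^{1,1/2}$ norms on the inputs, and — crucially — in Cases B, C, D and in Case A2 the derivative landing on the external frequency $k$ is absorbed either by the resonance gain $H_n\gtrsim k^2$ or by $k^*_2k^*_3k^*_4\gtrsim k^2$ (resp.\ $(k^*_3)^2 k^*_4\gtrsim k^2$), so that after the symbol reduction the factor carrying $k-1$ powers of regularity is an \emph{internal} frequency, not the one we want to put in $X^{0,1/2}$. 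Thus I would argue by cases following verbatim the structure in Lemma \ref{Lemma: NONLINEAR TERM IN Y1}: in each case identify which internal slot must carry the top regularity $s_1 = 1$, assign the two "large" Bourgain-space factors (those getting $L^4$ via $X^{0,1/3}\hookrightarrow L^4$, or the two explicit factors in \eqref{TaoEstiamte}) to $u$'s or $v$'s of full regularity, and place the $u-v$ factor in the slot that \eqref{TaoEstiamte} allows to sit in $X^{0,1/2-\delta}$ (or $X^{1/2-\delta,1/2-\delta}$ when it is among the "tail" factors — but note a tail factor only ever needs $s_1 - \delta$ regularity in Lemma \ref{GeneralLemma wo Hn}, and here $s_1=1$ while we want $u-v$ measured at regularity $0$, so we would instead trade: keep $u-v$ in a large slot). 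Since $\|u-v\|_{X^{0,1/2}_T} \le \|u-v\|_{X^{1/2,1/2}_T}$ is false but $\|u-v\|_{X^{0,1/2-\delta}_T}\lesssim T^\delta\|u-v\|_{X^{0,1/2}_T}$ holds, the small power of $T$ is harvested exactly as before from time-localization.

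The one genuinely delicate point — and the main obstacle — is that in Lemma \ref{GeneralLemma wo Hn} (Cases B, C, D) the bilinear-type estimates only ever ask for $X^{1/2-\delta,1/2-\delta}$ on the "tail" factors $u_5,\dots,u_n$, and in Case A2 similarly; when the $u-v$ factor happens to be forced into one of those tail slots, we would be asking for $X^{1/2-\delta,1/2-\delta}$ control of $u-v$, which is stronger than $X^{0,1/2}$. The resolution, exactly as in the proof of Lemma \ref{Lemma: NONLINEAR TERM IN Y1}, is that in every relevant case at least two of the four top frequencies $k^*_1,\dots,k^*_4$ are comparable to $k$ or to each other (this is the content of the dispersion proposition's Cases B, C, D and of $H_n\gtrsim k^2$), so one is free to reorganize: rather than putting $u-v$ in a tail slot, always place $u-v$ in one of the slots that \eqref{TaoEstiamte} permits at regularity $0$ with a $\tau$-exponent $1/2-\delta$, and spend the surplus regularity $k^*_1 k^*_2 k^*_3 k^*_4 / k \gtrsim$ (something) on the $u$'s and $v$'s which are in $X^{1,1/2}_T$ anyway. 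Because the sum over the finitely many index orderings $\ell$ and the finitely many case splits is finite and each piece produces a factor $T^\varepsilon C(\|u\|_{X^{1,1/2}_T},\|v\|_{X^{1,1/2}_T})$ times $\|u-v\|_{X^{0,1/2}_T}$, summing gives the claimed bound; I would not write out the $L^4$–$L^\infty$ Hölder chains again, only indicate they are identical to those in Lemma \ref{Lemma: NONLINEAR TERM IN Y1} with one input relabelled $u-v$ and the others $u$ or $v$.
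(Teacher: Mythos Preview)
Your proposal is correct and follows essentially the same route as the paper: telescope the difference into multilinear pieces with one $u-v$ factor and $w_j\in\{u,v\}$ otherwise, then run the case analysis A/B/C/D using \eqref{TaoEstiamte} so that $u-v$ always lands in a slot requiring only $X^{0,1/2-\delta}$, with $T^\varepsilon$ coming from localization. One remark: the paper's argument is actually simpler than you make it, because here $s_0=0$ rather than $s_0=1$ as in Lemma \ref{Lemma: NONLINEAR TERM IN Y1}, so only the single power from $\sigma=ik$ must be absorbed; in Case A this gives $\sigma\langle\tau-k^3\rangle^{-1/2}=O(1)$ (respectively $\sigma\langle\tau_i-k_i^3\rangle^{-1/2}=O(1)$ in A2) outright with no residual derivative to distribute, so the ``bad'' A2 subcase you worry about and the reorganization of tail slots are simply not needed. (Minor slip: the inequality $\|u-v\|_{X^{0,1/2}_T}\le\|u-v\|_{X^{1/2,1/2}_T}$ is true, not false --- you mean its reverse.)
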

\begin{proof}
$ $\newline
We simply proceed by case noting that the above claim will follow if we can prove, under the same hypothesis as the lemma and with $w_2, \cdots, w_{n}\in X^{1, 1/2}$, that
\begin{align*}
\|\mathcal{R}^2[u-v, w_2 &\cdots, w_{n}]\|_{X^{0, -1/2}_T} + \|\mathcal{NR}[u-v, w_{2}, \cdots, w_{n}]\|_{X^{0, -1/2}_T}\\
&\leq T^\varepsilon C\left(\|w_2\|_{X^{1, 1/2}_T}, \cdots, \|w_{n}\|_{X^{1, 1/2}_T}, \|u\|_{X^{1, 1/2}_T}, \|v\|_{X^{1, 1/2}_T} \right)\|u-v\|_{X^{0, 1/2}_T}.
\end{align*}
This, just like every other lemma, follows by considering cases. The crux of the proof in every case is simply that we must only overcome one power of $k$ arising from the symbol $\sigma(k,\vec{k}) = ik$, and in every case but $A$ there is at least one other large frequency  (or combination of other frequencies leading to a large combined frequency). This will allow us great freedom in where we take regularity from to fulfil the single derivative requirement. Note that 
this primarily differs from Lemma \ref{GeneralLemma w Hn} in that we may completely avoid the bad subcase of A2 in which $k_{1}, k_{2}\ne k_{i}.$

Just like in Lemma \ref{GeneralLemma w Hn}, we denote $\widehat{w_j}(\tau_j,k_j)$ by $\widehat{w}_{k_j}$ and proceed by cases:

\textbf{Case A:} This case follows by examining the proofs of Lemmas \ref{GeneralLemma w Hn} and \ref{Lemma: NONLINEAR TERM IN Y1}. Assuming Case A1, the $X^{0,-1/2}$ norm reduces, by duality and Plancherel, to showing
\begin{equation}\label{X0 bound first ineq}
\int_{\Gamma_n}\langle\tau-k^3\rangle^{-1/2}\sigma(k,\vec{k}) \widehat{z}_k(\widehat{u}_{k_1}-\widehat{v}_{k_1})\prod_{r=2}^{n}\widehat{w}_{k_r}\,d\Gamma\lesssim\|z\|_{L_{x,t}^2}\|u-v\|_{X^{0,1/2}_T}\prod_{r=2}^n\|w_r\|_{X^{1,1/2}_T}.
\end{equation}
However, 
\[
\sigma(k,\vec{k})\langle \tau-k^3\rangle^{-1/2} = O(1)
\]
in this regime, so that by Plancherel, H\"olders, and \eqref{TaoEstiamte} we have
\[
\eqref{X0 bound first ineq}\lesssim \|z\|_{L_{x,t}^2}\|u-v\|_{X^{0,1/2-\delta}_T}\prod_{r=2}^n\|w_r\|_{X_T^{1/2-\delta,1/2-\delta}},
\]
which leaves room for a small power of $T$ by localization.

Assuming Case A2, we let $\langle \tau_i - k_i^3\rangle ^{-1/2}$ be the largest of the factors considered in the statement of A2 and additionally assume that $i = 1$. Duality and Plancherel then reduce the problem to showing
\begin{align}\label{X0 bound second ineq}
\int_{\Gamma_n}\langle\tau_i-k_i^3\rangle^{-1/2}\sigma(k,\vec{k}) \widehat{z}_k &(\widehat{u}_{k_1}-\widehat{v}_{k_1})\prod_{r=2}^{n}\widehat{w}_{k_r}\,d\Gamma \\
&\lesssim\|z\|_{X^{0,1/2}}\|u-v\|_{L^2_{x,t}}\prod_{r=2}^n\|w_r\|_{X^{1,1/2}_T}.\nonumber
\end{align}
Since we again have that
\[
\langle \tau_i-k_i\rangle^{-1/2}\sigma(k,\vec{k})=O(1),
\]
it follows by Plancherel, H\"olders, and \eqref{TaoEstiamte} that
\[
\eqref{X0 bound second ineq}\lesssim \|u-v\|_{L^2_{x,t}}\left\|z\prod_{r=2}^nw_r\right\|_{L^2_{x,t}}\lesssim \|u-v\|_{L^{2}_{x,t}}\|z\|_{X^{0,1/2-\delta}}\prod_{r=2}^n\|w_r\|_{X^{1/2-\delta, 1/2-\delta}},
\]
where we again have a small power of $T$ afforded by localization.

If, on the other hand, $i\ne 1$, then we need to show
\[
\eqref{X0 bound second ineq}\lesssim \|z\|_{X^{0,1/2}}\|u-v\|_{X^{0, 1/2}_T}\|w_i\|_{X_T^{1,0}}\prod_{\substack{r=1\\r\ne i}}\|w_r\|_{X^{1,1/2}_T}.
\]
This again reduces by Plancherel, H\"olders, and \eqref{TaoEstiamte} to
\[
\eqref{X0 bound second ineq}\lesssim \|w_i\|_{L^2_{x,t}}\left\|z(u-v)\prod_{\substack{r=2\\r\ne i}}w_r\right\|_{L^2_{x,t}}\lesssim \|z\|_{X^{0,1/2-\delta}}\|u-v\|_{X^{0,1/2-\delta}}\|w_i\|_{L^2_{x,t}}\prod_{\substack{r=1\\r\ne i}}\|w_i\|_{X^{1/2-\delta, 1/2-\delta}},
\]
with again a small power of $T$ by localization.

\textbf{Case B:} In this case, $\mathcal{R}^2$ has three frequencies comparable to $k$ and $\mathcal{NR}$ doesn't contribute by definition. This, then, nearly follows from the proof of Lemma \ref{Lemma: NONLINEAR TERM IN Y1}, since
\[
\frac{|\sigma|}{|k_j|}=O(1)
\]
for any of the three $k_j$ mentioned earlier. For the sake of simplicity, let $j = 2$ (all that matters is that it is not the one of the three possibly associated to $u-v$). It follows by duality and the above fact that we have to prove the bound
\begin{equation}\label{XO Bound Third int}
\int_{\Gamma_n} \widehat{z}_k(\widehat{u}_{k_1}-\widehat{v}_{k_1})\prod_{i=2}^{n}\widehat{w}_{k_i}\,d\Gamma\lesssim \|z\|_{X^{0,1/2}}\|u-v\|_{X^{0,1/2}_T}\|w_2\|_{X^{0,1/2}_T}\prod_{r=3}^n\|w_r\|_{X^{1,1/2}_T}.
\end{equation}
Applying Plancherel, H\"olders, and \eqref{TaoEstiamte} we get
\begin{align*}
\eqref{XO Bound Third int}&\lesssim \|z\|_{L^2_{x,t}}\left\|(u-v)w_2\prod_{r=3}^n w_r\right\|_{L^2_{x,t}}\\
&\lesssim \|z\|_{X^{0,0}}\|u-v\|_{X^{0,1/2-\delta}_T}\|w_2\|_{X^{0,1/2-\delta}_T}\prod_{r=3}^{n}\|w_r\|_{X^{1/2-\delta,1/2-\delta}_T},
\end{align*}
where we again get a small power of $T$ by localization.

\textbf{Case C:} In this case, $k^*_3\gtrsim k$. However, this only means that we have three frequencies comparable to $k$, and hence the proof for Case B handles this.

\textbf{Case D:}
In this case we have $(k^*_3)^2k_4^*\gtrsim k^2$ and again assume that $n\geq 5$. We first split the proof into two cases: $k_1=k^*$ and $k_1\ne k^*$. In the first case, we have 
\[
\frac{|\sigma(k,\vec{k})|}{|k^*_2k^*_3k^*_4|^{1/2}} = O(1),
\]
and hence, without loss of generality, need to prove the bound 
\begin{align}\label{XO bound fourth ineq}
\int_{\Gamma_n} \widehat{z}_k(\widehat{u}_{k_1}-\widehat{v}_{k_1})&\widehat{w}_{k_2}\widehat{w}_{k_3}\widehat{w}_{k_4}\prod_{r=5}^{n}\widehat{w}_{ k_r}\,d\Gamma\\
&\lesssim \|z\|_{X^{0,1/2}}\|u-v\|_{X^{0,1/2}_T}\prod_{r=2}^4\|w_r\|_{X^{1/2, 1/2}_T}\prod_{r=5}^n\|w_r\|_{X^{1,1/2}_T}.\nonumber
\end{align}
Using Plancherel, H\"olders, and \eqref{TaoEstiamte}
\begin{align*}
\eqref{XO bound fourth ineq}&\lesssim \|z\|_{L^2_{x,t}}\left\|(u-v)w_2w_3w_4\prod_{r=5}^nw_r\right\|_{L^2_{x,t}}\\
&\lesssim\|z\|_{X^{0,0}}\|u-v\|_{X^{0,1/2-\delta}_T}\prod_{r=2}^{4}\|w_r\|_{X^{1/2-\delta, 1/2-\delta}_T}\prod_{r=5}^n\|w_r\|_{X^{1/2-\delta, 1/2-\delta}_T}.
\end{align*}
Time localization again gives a small power of $T$.

If, on the other hand, $k_1\ne k^*$ then, without loss of generality, 
\[
\frac{|\sigma(k,\vec{k})|}{|k_2|} = O(1).
\]
It follows by the exact same Plancherel and H\"olders application followed by another (\ref{TaoEstiamte}) used in Case B, that we have the bound
\[
\lesssim \|z\|_{X^{0,0}}\|u-v\|_{X^{0, 1/2-\delta}_T}\prod_{r=2}^{n}\|w_r\|_{X^{1, 1/2-\delta}_T},
\]
with the usual small power of $T$ gain.
\end{proof}

Here we use the prior work and finally proceed to prove local existence for (\eqref{modified LWP PDE}). 
\begin{proof}\textit{Well-posedness of \eqref{modified LWP PDE}}
$ $\newline
Let $u_0\in H^1$, $B := \{ u\in X^{1, 1/2}_T\,:\, \|u\|_{X^{1, 1/2}_T}\leq M\|u_0\|_{H^1}\}$, $0 < T\ll1$, and define the integral operator
\begin{align*}
\Gamma_T[u] := &\chi(t/T)e^{-t\partial_x^3}u_0\\
&+ \int_0^te^{-i(t-s)\partial_x^3}\chi(s/T)\left(L_t[\chi(s/T)u]f - \mathcal{R}^2[\chi(s/T)u]-\mathcal{NR}[\chi(s/T)u]-\gamma u\right)\,ds.
\end{align*}
It follows that a fixed point of $\Gamma_T$ on $B$ would give us a solution of (\ref{modified LWP PDE}) in $X_T^{1, 1/2}$ with the desired bounds. 

Now, by the proof of Lemma \ref{Lemma: NONLINEAR TERM IN Y1}, Remark \ref{REMARK: XSB BOUNDS FOR CASES BCD}, and the proof of Lemma \ref{forcing Contraction}, we see that for any $u\in B$ we have
\[
\|\Gamma_T[u]\|_{X^{1, 1/2}_T}\lesssim \|u_0\|_{H^1} + T^\varepsilon\left(\|f\|_{H^1} + \|u_0\|_{H^1}^n + \|u_0\|_{H^1}^m + \gamma\|u_0\|_{H^1}\right), 
\]
so that for small enough $T = T(\|u_0\|_{H^1}, \|f\|_{H^1}, \gamma)$ and large enough $M$, $\Gamma_T$ maps $B$ to $B$. 

By our prior remarks, $B$ is closed under the $\|\cdot\|_{X^{0,1/2}}$ norm, so it suffices to prove the contraction with this norm. Let $u, v\in B$, and apply Lemmas \ref{forcing Contraction} and \ref{Lemma: Contraction in X(1, 1/2) with X(0,1/2) norm}. It follows immediately that
\[
\|\Gamma_T[u]-\Gamma_T[v]\|_{X^{0, 1/2}_T} \lesssim C(\|u\|_{X^{1, 1/2}_T}) T^\varepsilon \langle\|f\|_{H^1}\rangle\|u-v\|_{X^{0,1/2}_T}.
\]
This proves that $\Gamma_T$ is a contraction on $B$ for a possibly smaller \[T = T(\|u_0\|_{H^1}, \|f\|_{H^1}, \gamma),\] and hence there is a solution, $\tilde{u},$ of (\ref{modified LWP PDE}) in $X^{1, 1/2}.$

It remains to prove the final claim of Proposition \ref{SmootherfTheorem}-- that is, 
\[
\|\tilde{u}\|_{Y^1_T}\leq C(\|f\|_{H^1}, \gamma,
\|u_0\|_{H^1}).\]
This follows by the proof that $\tilde{u}\in Y^1_T$. Indeed, Lemma $\ref{Lemma: NONLINEAR TERM IN Y1}$ demonstrates that the non-linear part of the Duhamel term is in $Y^1_T$, while $f, \gamma\tilde{u},$ and the free solution portions are handled trivially. Furthermore, the conclusion of Lemma \ref{Lemma: NONLINEAR TERM IN Y1} gives $X^{1,1/2}$ bounds on the non-linear term, and hence the bound follows.
\end{proof}
%
%
\section{Smoothing}
Letting $u$ be the solution to $(\ref{SmootherfPDE})$ guaranteed above. We now seek to show that removing a shifted version of the linear solution results in smoothing. That is, we seek to prove the following proposition.
\begin{prop}\label{Smoothing}
Let $u$ a solution (\ref{SmootherfPDE}) at $s = 1$, $0 < \rho < 1$, and $L_t$ defined in $(\ref{L definition})$. There is then a $T =T(\gamma, \|f\|_{H^1}, \|u_0\|_{H^{1}})$ so that \[\|L_t[u]u  - W^\gamma_t u_0\|_{H^{1+\rho}}\leq C\left(\rho, \gamma, \|u_0\|_{H^{1}}, \|f\|_{H^1}\right)\] for all $0\leq t\leq T.$ This persists globally if we have $\textit{a priori}$ control of $\|u\|_{H^1}.$
\end{prop}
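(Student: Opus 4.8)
The plan is to write $\tilde u = L_t[u]u$, which by the discussion in Section 3 solves the gauged equation \eqref{modified LWP PDE}, so that its Duhamel formulation reads
\[
\tilde u(t) = W_t^\gamma u_0 - \int_0^t W_{t-s}^\gamma\left(\mathcal{R}^2[\tilde u] + \mathcal{NR}[\tilde u] - \tilde f\right)(s)\,ds.
\]
Hence $\tilde u - W_t^\gamma u_0$ is exactly the nonlinear-plus-forcing Duhamel term, and by the linear estimates of Section 2 (the damped analogue of the Duhamel bound, valid since $\chi(t)e^{-\gamma t}$ is Schwartz) it suffices to bound this term in $Y^{1+\rho}_T$, which in turn reduces via the Duhamel estimate to bounding $\mathcal{R}^2[\tilde u]$, $\mathcal{NR}[\tilde u]$, and $\tilde f$ in $Z^{1+\rho}_T$. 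The forcing piece $\tilde f$ is handled by the already-proved Lemma \ref{forcing Contraction} (taking $v = 0$, $s = 1+\rho$, at the cost of $f \in \dot H^{2+\rho}$ — or more simply by redoing that estimate directly, using $f \in \dot H^1$ and the fact that $\tilde f$ carries no extra derivatives beyond the gauge factor, so the $H^{1+\rho}$ norm of $\tilde f$ costs one derivative on $f$; here I would instead invoke that on a short time interval the multiplier $\widehat{L_t[\tilde u]}$ is uniformly bounded in $k$ and close to $1$, so in fact $\|\chi(t/T)\tilde f\|_{Z^{1+\rho}_T} \lesssim T^\varepsilon C(M)\|f\|_{H^{2+\rho}}$, which is finite given the hypotheses — the cleanest route is to simply cite Lemma \ref{forcing Contraction} at level $s = 1+\rho$).

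Wait — this forces $f \in H^{2+\rho}$, which is stronger than the hypothesis $f\in\dot H^1$; so the correct route is the one actually used for smoothing results: the gauged forcing $\tilde f$ contributes to the Duhamel term with a kernel that already gained smoothing, and one estimates $\widehat{\tilde f}_k = \widehat{L_t[\tilde u]}_k\widehat f_k$ noting $|\widehat{\tilde f}_k| = |\widehat f_k|$ pointwise in $k$, so $\tilde f\in\dot H^1$ exactly, and one must place the extra $\rho$ derivatives elsewhere. This is precisely why smoothing is needed: the genuinely new content is that $\mathcal{R}^2[\tilde u]$ and $\mathcal{NR}[\tilde u]$ can be estimated in $Z^{1+\rho}_T$ controlled by $\|\tilde u\|_{Y^1_T}$, with no loss. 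I would first record, following the resonant-set analysis at the start of Section 3, that in $\mathcal{R}^2$ and $\mathcal{NR}$ the external frequency $k$ is comparable to a product of large internal frequencies (Cases A–D of Proposition \ref{oh2020smoothing}'s dispersion lemma), so that the symbol $\sigma(k,\vec k) = ik\cdot\langle k\rangle^{\rho}$ satisfies the hypotheses \eqref{Symmetric Condition for Hn} or \eqref{Symmetric Condition wo Hn} of Lemmas \ref{GeneralLemma w Hn} and \ref{GeneralLemma wo Hn} with $s_0 = 1+\rho$, $s_1 = 1$ — the key point being that $\rho < 1$ and the two (or three/four) largest frequencies together dominate $|k|^{1+\rho}$. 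For the high–high–low Case A ($H_n\gtrsim k^2$) one uses Lemma \ref{GeneralLemma w Hn}; for Cases B, C, D one uses the resonance $k_2^*k_3^*k_4^*\gtrsim k^2$ (or $(k_3^*)^2k_4^*\gtrsim k^2$) together with Lemma \ref{GeneralLemma wo Hn}, absorbing the $\langle k\rangle^{\rho}$ by trading it against a fraction of these internal frequencies since $\rho<1$.

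The main obstacle is the doubly-resonant term $\mathcal{R}^2[\tilde u]$, where two internal frequencies equal the external $k$: here no dispersion is available from the phase $H_n$, and $k$ is not a priori comparable to a product of \emph{distinct} large frequencies — instead one has $\widehat{\mathcal{R}^2[\tilde u]}_k$ of the schematic form $ik\,\widehat{\tilde u}_k\,\widehat{\tilde u}_k\cdot(\text{lower-order factor})$, so one genuinely needs a normal form transformation, as flagged in the introduction. I would therefore, after the reductions above, apply a normal form (differentiation by parts in the $X^{s,b}$ setting / Poincaré–Dulac) to $\mathcal{R}^2$, which replaces it by a boundary term at $H^{1+\rho}$ (estimated by $\|\tilde u\|_{Y^1}^{n}$ via the multilinear lemmas, since the normal form divides by $H_n\gtrsim k^2$, buying two derivatives — far more than the $\rho$ needed) plus higher-degree terms arising from substituting the equation for $\partial_t\tilde u$, each of which again falls under Lemma \ref{GeneralLemma w Hn} or \ref{GeneralLemma wo Hn} after checking the frequency bookkeeping; the new forcing-dependent terms produced when $\partial_t\tilde u$ hits the factor $\tilde f$ are controlled using $\|\tilde f\|_{Z^1}\lesssim\|f\|_{H^1}$ and the smoothing already gained from the $H_n^{-1}$ factor. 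Collecting all pieces gives $\|\tilde u - W_t^\gamma u_0\|_{Y^{1+\rho}_T}\lesssim T^\varepsilon C(\rho,\gamma,\|u_0\|_{H^1},\|f\|_{H^1})$ on the local time interval; the global statement then follows by the standard iteration/gluing argument, using the a priori bound on $\|u\|_{H^1}$ (hence on $\|\tilde u\|_{Y^1}$ on each unit-length subinterval, via Proposition \ref{SmootherfTheorem}) to keep the constants uniform, together with the fact that $W_t^\gamma$ contracts and the nonlinear increments over successive intervals remain bounded in $H^{1+\rho}$.
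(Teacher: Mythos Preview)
Your proposal has two genuine gaps that prevent it from closing.

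\textbf{Wrong term needs the normal form.} You identify $\mathcal{R}^2$ as the obstruction, but it is not: in $\mathcal{R}^2$ at least \emph{two} internal frequencies equal $k$, so the remaining $n-2$ must sum to $-k$, forcing a third frequency $\gtrsim|k|$. That gives three large internal frequencies, and Lemmas~\ref{GeneralLemma w Hn}--\ref{GeneralLemma wo Hn} then estimate $\mathcal{R}^2[\tilde u]$ directly in $Z^{1+\rho}$ for $\rho<1$. The actual obstruction is the \emph{high--low} part of $\mathcal{NR}$: the regime where one frequency $k_1$ dominates all others (so $k_2^*$ is small) and $H_n\gtrsim k_1^2$. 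There, condition~\eqref{Symmetric Condition for Hn} reads $|k|^{\rho}\cdot|k|/(k^* k_2^*)\sim |k|^\rho$, which is unbounded. The paper isolates this piece as $\mathcal{HL}^n$ and applies the normal form $T^n_{\mathcal{NF}}$ \emph{there}, with $W_t^\gamma u_0$ in the high slot; this generates a remainder $v$ and a term $\mathcal{HL}^n[v,\tilde u,\dots,\tilde u]$ that can be bootstrapped via Corollary~\ref{Large Term Estiamte} since $v$ carries the high frequency.

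\textbf{The forcing is not handled.} You correctly notice that bounding $\tilde f$ in $Z^{1+\rho}$ would require $f\in H^{2+\rho}$, but then never resolve it---the appeal to ``smoothing already gained from the $H_n^{-1}$ factor'' only covers the copies of $\tilde f$ that land \emph{inside} the normal form after substituting $\partial_t\tilde u$, not the direct Duhamel contribution of $\tilde f$. The paper's fix is to subtract off $\mathcal{F}_x^{-1}(L_t[u]\widehat F)$ with $\widehat F=\widehat f/(\gamma-ik^3)$ in the definition of $v$; since $(\partial_t-ik^3+\gamma)L_t[u]\widehat F=\tilde f+F_xG$, the raw $\tilde f$ disappears from the $v$-equation and is replaced by $F_xG$ with $F\in H^4$, which is trivially in $Z^{1+\rho}$. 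Without this device the argument does not close at $f\in\dot H^1$.
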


\begin{remark}
The lemmas used in both local existence and local smoothing are sufficiently general enough to be applied to $s > 1/2$, as studied in \cite{oh2020smoothing} and sources within. The end point (obtained in \cite{colliander2004multilinear}) should also be obtainable using \eqref{TaoEstiamte} and the ideas present in Section \ref{LWP}. However, we do not pursue this herein since we lack suitable conserved quantities for $1/2 \leq s < 1$.
\end{remark}

\subsection{Smoothing Set-Up}
In this section assume that the non-linearity of $\eqref{SmootherfPDE}$ is again $au^n+bu^m$, with $n\ne m\geq 2$. It will be clear that our methods are unaltered by additional terms.

As previously noted, $\mathcal{R}^2$ must automatically come with at least 3 frequencies comparable to the external frequency, $k$- but the same cannot be said for $\mathcal{NR}$. For example, there could be just one very large frequency with no product of smaller frequencies similar in magnitude to the external frequency. This necessarily lives in Case $A$, in which case (in view of Lemma \ref{GeneralLemma w Hn}) we would have to counteract a $|k|^{1+\rho}$ weight with only a $|k|$ factor, which grants us no smoothing. 

Using this intuition, we do another decomposition in order to remove this poor situation. Indeed, we decompose $\mathcal{NR}[\tilde{u}]$ by defining:
\begin{align*}
    \widehat{\mathcal{HL}}^n[\tilde{u}]_k &:= \sum_{\substack{k_1+\cdots+k_n = k\\ |k_1|\gg \max_{j\ne 1}|k_j|\\ \sum_{j\ne 1}k_j\ne 0\\H_n\gtrsim (k_1)^2}} ik\prod _{j=1}^n \tilde{u}_{k_j}
\end{align*}
and
\begin{align*}
        \widehat{\mathcal{RE}}[\tilde{u}]_k &:= \widehat{\mathcal{NR}}[\tilde{u}]_k - an\widehat{\mathcal{HL}}^n[\tilde{u}]_k - bm\widehat{\mathcal{HL}}^m[\tilde{u}]_k.
\end{align*}
It follows that $\mathcal{RE}$ contains the terms with multiple nearly large internal frequencies as well as the negation of $H_n\gtrsim (k^*)^2$. Note that we have used symmetry to move the highest frequency to the first input, and that $\mathcal{HL}$ carries the Case $A$ restriction, and not simply a large frequency.

We additionally define a normal form transformation by
\begin{equation}\label{NF definition}
T_{\mathcal{NF}}^n[u_1, \cdots, u_n] := \sum_{\substack{k_1+\cdots+k_n = k\\ k_{1}\gg \max{k_2,\cdots, k_n}\\k_2+\cdots +k_n\ne 0\\ H_n\gtrsim k_1^2}}\frac{k}{H_n(k_1,\cdots,k_n)}e^{ikx}\prod_{j=1}^n \widehat{u}_{k_j}.
\end{equation}

\begin{remark}
Normal form transformations have a long history in the periodic setting (Recently, see: \cite{erdogan2017smoothing, erdougan2013global,oh2013resonant}) and, in this case, the stipulation that $H_n\gtrsim (k^*)^2$ makes the connection to other approaches to smoothing very clear (see \cite{correia2020nonlinear}). Indeed, if we perform a change of variables on the initial PDE \eqref{AttractorPDEoNR}  with $\mathbb{K} = \mathbb{T}$ and non-linearity $g(x) = x^{k}$ and set $u = W_tv$, then $v$ satisfies an equation of the form
\[
\widehat{v}(k, t) = \widehat{v}(k,0) - \int_0^t \sum_{\Omega}ike^{-is(k^3-k_1^3-\cdots- k_{n}^3)}\prod_{i=1}^{n}\widehat{u}(k_i, s)\,ds,
\]
where the exponent is $-isH_n$. It follows that when $H_n\gtrsim (k^*)^2 > 0$ we can perform integration by parts and pick up a large denominator-- an essential observation.
\end{remark}
The following lemma places a bound on the amount of smoothing we will be able to get:
\begin{lemma}
It follows from this definition that for any $s\in\mathbb{R}$ and $\varepsilon > 0$, we have
\[
\|T_\mathcal{NF}^{n}[u_1(x,0), u_{2}(x,0), \cdots, u_n(x,0)]\|_{H^{s+1}}\lesssim_\varepsilon \|u_1(x,0)\|_{H^s_x}\prod_{r=2}^n\|u_r(x,0)\|_{H^{1/2+\varepsilon}}.
\]
\end{lemma}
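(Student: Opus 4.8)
The plan is to bound the Fourier-side expression for $T_{\mathcal{NF}}^n$ by exploiting the large denominator $H_n(k_1,\dots,k_n) \gtrsim k_1^2$ that is built into the summation restriction, and then to distribute the resulting weights across the inputs via H\"older's inequality in $\ell^2$ together with the Sobolev-type embedding $\ell^1_k \hookrightarrow \ell^2_k$ after absorbing a $\langle k_j\rangle^{1/2+\varepsilon}$ factor. First I would write, on the support of the multiplier, $k = k_1 + \dots + k_n$ with $|k_1| \gg \max_{j\ne 1}|k_j|$, so that $|k| \sim |k_1|$ and hence
\[
\left|\frac{k}{H_n(k_1,\dots,k_n)}\right| \lesssim \frac{|k_1|}{k_1^2} = \frac{1}{|k_1|} \le \frac{\langle k_1\rangle^{s}}{\langle k\rangle^{s+1}}\langle k_1\rangle\langle k\rangle^{-s-1}\cdot\langle k\rangle^{s+1},
\]
or more cleanly: $\langle k\rangle^{s+1}\,|k/H_n| \lesssim \langle k\rangle^{s+1}\langle k_1\rangle^{-1} \sim \langle k_1\rangle^{s}$ since $\langle k\rangle \sim \langle k_1\rangle$. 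Thus
\[
\langle k\rangle^{s+1}\,\bigl|\widehat{T_{\mathcal{NF}}^n[u_1,\dots,u_n]}_k\bigr| \lesssim \sum_{\substack{k_1+\cdots+k_n=k\\ |k_1|\gg\max_{j\ne1}|k_j|}} \langle k_1\rangle^{s}\,|\widehat{u}_1(k_1)|\prod_{r=2}^n |\widehat{u}_r(k_r)|,
\]
so the right-hand side is a convolution of the sequences $\langle \cdot\rangle^s|\widehat{u}_1|$ and $|\widehat{u}_2|,\dots,|\widehat{u}_n|$ evaluated at $k$.

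Next I would take the $\ell^2_k$ norm of this convolution. Using Young's inequality for convolutions on $\mathbb{Z}$, $\|a_1 * a_2 * \cdots * a_n\|_{\ell^2} \lesssim \|a_1\|_{\ell^2}\prod_{r=2}^n \|a_r\|_{\ell^1}$, we get
\[
\|T_{\mathcal{NF}}^n[u_1,\dots,u_n]\|_{H^{s+1}} \lesssim \|u_1\|_{H^s}\prod_{r=2}^n \|\widehat{u}_r\|_{\ell^1_k}.
\]
Finally, for each $r\ge 2$ I would write $\|\widehat{u}_r\|_{\ell^1_k} = \sum_k \langle k\rangle^{-1/2-\varepsilon}\cdot\langle k\rangle^{1/2+\varepsilon}|\widehat{u}_r(k)| \le \bigl(\sum_k \langle k\rangle^{-1-2\varepsilon}\bigr)^{1/2}\|u_r\|_{H^{1/2+\varepsilon}} \lesssim_\varepsilon \|u_r\|_{H^{1/2+\varepsilon}}$ by Cauchy--Schwarz, the first factor being a convergent sum on $\mathbb{Z}$. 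Combining these gives exactly the claimed bound. (The restriction $\sum_{j\ne1}k_j\ne 0$ in the definition is not needed for this estimate; one can simply drop it, enlarging the summation region.)

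The main obstacle — really the only place any care is required — is the very first step: verifying that the frequency-localization $|k_1|\gg\max_{j\ne1}|k_j|$ forces $\langle k\rangle\sim\langle k_1\rangle$ and hence $\langle k\rangle^{s+1}|k/H_n|\lesssim\langle k_1\rangle^s$, so that all the ``extra'' regularity $\langle k_1\rangle^s$ lands on $u_1$ and none is needed on $u_2,\dots,u_n$ beyond the $\langle k_j\rangle^{1/2+\varepsilon}$ supplied by the summability of $\langle\cdot\rangle^{-1-2\varepsilon}$. Once that weight bookkeeping is set up correctly, everything else is Young's inequality and Cauchy--Schwarz. One should also note the roles of $\varepsilon$: it is only there to make $\sum_k\langle k\rangle^{-1-2\varepsilon}<\infty$, which is why the statement carries ``$\lesssim_\varepsilon$'' and ``for any $\varepsilon>0$.''
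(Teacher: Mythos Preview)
Your proof is correct and is essentially the Fourier-side rendition of the paper's argument: the paper observes $\sigma = k/H_n = O(1/k)$ and then applies H\"older in physical space ($L^2\times (L^\infty)^{n-1}$) followed by the Sobolev embedding $H^{1/2+\varepsilon}(\mathbb{T})\hookrightarrow L^\infty(\mathbb{T})$, whereas you carry out the dual steps on the Fourier side via Young's convolution inequality ($\ell^2*(\ell^1)^{n-1}\to\ell^2$) and Cauchy--Schwarz to bound $\|\widehat{u}_r\|_{\ell^1}\lesssim\|u_r\|_{H^{1/2+\varepsilon}}$. The weight bookkeeping $\langle k\rangle^{s+1}|k/H_n|\lesssim\langle k_1\rangle^s$ is identical in both, so the two proofs coincide up to the choice of side of Plancherel.
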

\begin{proof}
$ $\newline
Since we are, by definition, in Case A, we must have that $\sigma(k,\vec{k}) = O(1/k)$. Denoting $\widehat{u_i}(k_i)$ by $\widehat{u}_{k_i}$, the result follows fairly quickly by H\"olders and Sobolev embedding:
\[
\lesssim \|J^s_x u_1(x,0)\|_{L^2}\prod_{r> 1}\|u_r\|_{L^\infty}\lesssim\|u_1\|_{H^s}\prod_{r > 1}\|u_r\|_{H^{1/2+\varepsilon}}.
\]
\end{proof}
Using \eqref{NF definition} and symmetry, we see that
\begin{align*}
\left(\partial_t-ik^3+\gamma\right)&\mathcal{F}_x\left(T_{\mathcal{NF}}^{n}\right)(W^\gamma_t u_0, \tilde{u}, \cdots, \tilde{u})_k\\
&= \sum_{\substack{k_1+\cdots+k_n = k\\ k_{1}\gg \max{|k_2|,\cdots, |k_n|}\\k_2+\cdots +k_n\ne 0\\H_n\gtrsim k_1^2}}\frac{k(ik_1^3-\gamma)}{H_n(k_1,\cdots, k_n)}W^\gamma_tu_0\prod_{r=2}^n \tilde{u}_{k_r}\\
&\qquad+\sum_{\substack{k_1+\cdots+k_n = k\\ k_{1}\gg \max{|k_2|,\cdots, |k_n|}\\k_2+\cdots +k_n\ne 0\\H_n\gtrsim k_1^2}}\frac{k(n-1)}{H_n(k_1,\cdots, k_n)}W^\gamma_tu_0\partial_tv_{k_r}\prod_{r=3}^{n} \tilde{u}_{k_r} \\
&\qquad+\sum_{\substack{k_1+\cdots+k_n = k\\ k_{1}\gg \max{|k_2|,\cdots, |k_n|}\\k_2+\cdots +k_n\ne 0\\H_n\gtrsim k_1^2}}\frac{k(-ik^3+\gamma)}{H_n(k_1,\cdots, k_n)}W^\gamma_tu_0\prod_{r=2}^n \tilde{u}_{k_r}\\
&=-\sum_{\substack{k_1+\cdots+k_n = k\\ k_{1}\gg \max{|k_2|,\cdots, |k_n|}\\k_2+\cdots +k_n\ne 0\\H_n\gtrsim k_1^2}}ikW^\gamma_tu_0\prod_{r=2}^n \tilde{u}_{k_r}\\
&\qquad +\sum_{\substack{k_1+\cdots+k_n = k\\ k_{1}\gg \max{|k_2|,\cdots, |k_n|}\\k_2+\cdots +k_n\ne 0\\H_n\gtrsim k_1^2}}\frac{k(n-1)}{H_n(k_1,\cdots, k_n)}W^\gamma_tu_0(\partial_t-ik_{2}^3) \tilde{u}_{k_2}\prod_{r=3}^{n} \tilde{u}_{k_r}\\
&=-\mathcal{F}_x\left(\mathcal{HL}^{n}\right)[W_t^\gamma u_0, \tilde{u}, \cdots, \tilde{u}]_k\\
&\qquad+ (n-1)\mathcal{F}_x\left(T_\mathcal{NF}^{n}\right)[W_t^\gamma u_0, (\partial_t +\partial_x^3)\tilde{u}, \tilde{u}, \cdots, \tilde{u}]_k.
\end{align*}

The above calculation is the reasoning behind the definition of $\mathcal{HL}$, and motivates the following definition:
\begin{align}\label{v smooth definition}
\tilde{u} &= W_t^\gamma u_0 + an T_{\mathcal{NF}}^n[W_t^\gamma u_0, \tilde{u}, \cdots, \tilde{u}] + bmT_{\mathcal{NF}}^m[W_t^\gamma u_0, \tilde{u}, \cdots, \tilde{u}] + \mathcal{F}_x^{-1}(L_t[u]\widehat{F}) + v,
\end{align}
where
\[
\widehat{F} = \frac{\widehat{f}}{\gamma-ik^3},
\]
and 
\begin{align*}
(\partial_t-ik^3+\gamma)L_t[u]\widehat{F} &= L_t[u]\widehat{f} + L_t[u]ik\widehat{F}\int_\mathbb{T} an\tilde{u}^{n-1} + bm\tilde{u}^{m-1}\,ds\\
&:=\mathcal{F}_x\left(\tilde{f} + F_x G\right),
\end{align*}
where
\[
\widehat{G} = L_t[u]\int_\mathbb{T} an\tilde{u}^{n-1} + bm\tilde{u}^{m-1}\,ds.
\]

\begin{remark}
The definition of $v$ is reminiscent of the result in  \cite{wang2015global}, which asserted the existence of a global attractor for the 4-gKdV in $H^s(\mathbb{R})$ with forcing $f\in L^2\cap H^{s-3}$. A trick similar in nature to the $u-v$ one used above allows one to work with a modified equation that has sufficiently smooth forcing, at the cost of smooth potential terms.
\end{remark}

Substitution into the equation yields that $v$ satisfies the equation:
\begin{align*}
    v_t&+v_{xxx}+\gamma v =-F_xG-\mathcal{R}^2[\tilde{u}] - \mathcal{RE}[\tilde{u}]-\mathcal{HH}[\tilde{u}]\\
                         & - an(n-1)T_{\mathcal{NF}}^n[W^\gamma_t u_0, \mathcal{NR}[\tilde{u}]+\mathcal{R}^2[\tilde{u}] +\gamma \tilde{u} + \tilde{f}, \tilde{u}, \cdots, \tilde{u}]\\
                         &- bm(m-1) T_{\mathcal{NF}}^m[W^\gamma_t u_0, \mathcal{NR}[\tilde{u}]+\mathcal{R}^2[\tilde{u}] +\gamma \tilde{u} + \tilde{f}, \tilde{u}, \cdots, \tilde{u}]\\
                         & - an\mathcal{HL}^n[an T_{\mathcal{NF}}^n[W_t^\gamma u_0, \tilde{u}, \cdots, \tilde{u}] + bmT_{\mathcal{NF}}^m[W_t^\gamma u_0, \tilde{u}, \cdots, \tilde{u}]\\
                         &\qquad\qquad+ \mathcal{F}_x^{-1}(L_t[u]\widehat{F}) + v, \tilde{u}, \cdots, \tilde{u}]\\
                         & - bm\mathcal{HL}^m[an T_{\mathcal{NF}}^n[W_t^\gamma u_0, \tilde{u}, \cdots, \tilde{u}] + bmT_{\mathcal{NF}}^m[W_t^\gamma u_0, \tilde{u}, \cdots, \tilde{u}]\\
                         &\qquad\qquad+ \mathcal{F}_x^{-1}(L_t[u]\widehat{F}) + v, \tilde{u}, \cdots, \tilde{u}],
\end{align*}
with initial condition 
\begin{align*}
v(x,0) = -an T^n_\mathcal{NF}[u_0, \cdots, u_0] -bm T_\mathcal{NF}^m[u_0,\cdots, u_0] - F,\nonumber
\end{align*}
Observe that the above equation is obtained by replacing the first input of all of the $\mathcal{HL}$'s with \eqref{v smooth definition}.

\subsection{Smoothing Lemmas}
As in the prior lemmas, we assume that $\tilde{u} = \chi(t/T)\tilde{u}(x,t)$, $v = \chi(t/T)v(x,t)$, $F = \chi(t/T)F(x,t)$, and $u_0^\gamma= \chi(t/T)W_t^\gamma u(x,0)$ where $\chi$ is a smooth cutoff function that is $1$ on $[-1, 1]$ and $0$ outside $[-2, 2]$, and $n\ne m\geq 2$.

\begin{remark}
As a reminder, for $n\geq 4$ we defined in Lemma \ref{GeneralLemma w Hn} the cases

\begin{itemize}
            \item[A.] $H_n\gtrsim (k^*)^2$.
            \item[B.] $k_{j_0} = k$ for some $j_0\in\{1, \cdots, n\}.$
            \item[C.] ${k^*_3}\gtrsim k$
            \item[D.] $(k^*_3)^2k^*_4\gtrsim (k^*)^2$.
\end{itemize}
This notation will be frequently used in the lemmas to come.
\end{remark}
\begin{lemma}
If $\rho < 1$, then there is $\varepsilon > 0$ so that for $0 < T\ll 1$:
\begin{align*}
\|\mathcal{HH}[\tilde{u}]\|_{Z^{1+\rho}_T}\lesssim_\varepsilon T^\varepsilon\left(\tilde{u}\|_{Y^1_T}^n+\|\tilde{u}\|_{Y^1_T}^m\right).
\end{align*}
\end{lemma}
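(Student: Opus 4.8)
\textbf{Proof strategy for the $\mathcal{HH}$ estimate.}

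The plan is to identify $\mathcal{HH}[\tilde u]$ as the ``high-high'' piece of $\mathcal{NR}[\tilde u]$, that is the portion on which there are at least two comparably large internal frequencies, and to show that it falls under Lemma \ref{GeneralLemma wo Hn} (or, in the sub-region where the modulation is large, under Lemma \ref{GeneralLemma w Hn}) with $s_0 = 1+\rho$ and $s_1 = 1$. The symbol here is $\sigma(k,\vec k) = ik$, so the task is to verify the relevant multiplier bound, i.e. that on the support of $\mathcal{HH}$,
\[
\frac{|k|^{1+\rho}\,|ik|}{(k^*)^{1-\delta}(k^*_2 k^*_3 k^*_4)^{1}} = O(1).
\]
By definition $\mathcal{HH}$ collects exactly those frequency interactions in $\mathcal{NR}$ that are \emph{not} of the form ``one dominant frequency with the rest negligible and $H_n \gtrsim (k^*)^2$'' — in other words the cases in which either Case A fails (so we land in Cases B, C, or D of the dispersion-relation proposition) or Case A holds but there are two near-maximal internal frequencies. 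In all of those situations one has $k^* \sim k^*_2$, and in fact — since on $\mathbb{T}$ we have mean-zero conservation, so every $k_j \ne 0$ and $|k| \lesssim k^* \sim k^*_2$ — the quantity $(k^* k^*_2)$ already absorbs $|k|^{2+\rho} = |k|\cdot|k|^{1+\rho}$ once $\rho < 1$, namely $|k|^{2+\rho} \lesssim (k^*)^{2+\rho} \lesssim (k^*)^{1-\delta}(k^*_2)^{1+\rho+\delta} \le (k^*)^{1-\delta}(k^*_2)^{2}$ for $\rho+\delta \le 1$, so we may even drop $k^*_3,k^*_4$ entirely. Thus the multiplier hypothesis of Lemma \ref{GeneralLemma wo Hn} is satisfied with $s_1 = 1$ and a suitable $\delta$, and similarly the hypothesis \eqref{Symmetric Condition for Hn} of Lemma \ref{GeneralLemma w Hn} (with $s_1=1$) holds on the sub-region $H_n \gtrsim k^2$ since there $|k|^\rho |k|/(k^* k^*_2) \lesssim (k^*)^{\rho}/(k^*) = (k^*)^{\rho-1} = O(1)$.

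Concretely I would split $\mathcal{HH}[\tilde u]$ (for each of the two monomials $a u^n$, $b u^m$ in the nonlinearity — by the remark at the start of Section 4 the estimate is insensitive to adding terms) according to whether $H_n \gtrsim (k^*)^2$ or not. On the region $H_n \gtrsim (k^*)^2$ apply Corollary \ref{Homogenous Estimate} (the homogeneous version of Lemma \ref{GeneralLemma w Hn}) with $s_0 = 1+\rho$, $s_1 = 1$, using the multiplier bound above; this produces $T^\varepsilon \|\tilde u\|_{Y^1_T}^n$. On the complementary region — where $H_n$ is small, hence (via parts (2) and (3) of the dispersion proposition, and having removed single resonances so no $k_{j_0}=k$ survives except inside $\mathcal R^2$, which is not part of $\mathcal{NR}$) we are necessarily in Cases C or D, so $k^*_3 \gtrsim k$ or $(k^*_3)^2 k^*_4 \gtrsim k^2$, giving at least three near-maximal internal frequencies — apply Corollary \ref{Homogenous Estimate} via Lemma \ref{GeneralLemma wo Hn} with $s_0 = 1+\rho$, $s_1 = 1$; here one genuinely uses $k^*_3$ (and $k^*_4$) to soak up $|k|^{2+\rho}$ when only $k^* \sim k^*_2$ fails, but in Cases C/D one has $k^*_3\gtrsim k$ so $|k|^{2+\rho} \lesssim (k^*)^{1-\delta}(k^*_2 k^*_3)^{1+\rho/2+\delta'}\lesssim (k^*)^{1-\delta}(k^*_2 k^*_3 k^*_4)$ provided $\rho < 1$, confirming \eqref{Symmetric Condition wo Hn} with $s_1 = 1$. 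Summing the two regions and the two monomials yields the claimed bound with the $Y^1_T$ norms, and the power $T^\varepsilon$ is inherited directly from the corollaries.

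\textbf{Main obstacle.} The only real point requiring care is bookkeeping: making sure that the frequency restrictions defining $\mathcal{HH}$ (as the part of $\mathcal{NR}$ left over after peeling off $\mathcal{HL}^n$ and $\mathcal{HL}^m$) genuinely force $k^* \sim k^*_2$ or, failing that, $k^*_3 \gtrsim k$, so that the deficit $|k|^{2+\rho}$ against only $(k^* k^*_2)^{s_1}$ with $s_1 = 1$ can always be covered when $\rho < 1$ — this is exactly where the hypothesis $\rho < 1$ is consumed, and where one must be careful that the ``$\delta$'' loss in \eqref{Symmetric Condition wo Hn} on the top frequency is affordable (it is, since we have the full power $(k^*)^{1-\delta}(k^*_2)^{1}$ and $\rho + \delta < 1$ for $\delta$ small). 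Everything past the symbol verification is then a direct citation of Corollary \ref{Homogenous Estimate}; no new harmonic-analytic input is needed.
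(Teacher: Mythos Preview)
Your approach is essentially the same as the paper's: split $\mathcal{HH}$ (the piece of $\mathcal{NR}$ with at least two comparably large internal frequencies, hence no Case~B) into the region $H_n\gtrsim (k^*)^2$ handled by Lemma~\ref{GeneralLemma w Hn} and the complementary region (Cases~C/D) handled by Lemma~\ref{GeneralLemma wo Hn}, in each case verifying the symbol condition with $s_0=1+\rho$, $s_1=1$ and $\sigma=ik$, which is exactly where $\rho<1$ is consumed. Your reading of $\mathcal{HH}$ as the full complement of $\mathcal{HL}$ in $\mathcal{NR}$ is slightly broader than the paper's (which reserves the ``one dominant frequency, $H_n$ small'' sliver for $\mathcal{RE}$), but since your argument covers that sliver too this does not affect correctness.
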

\begin{proof}
$ $\newline
In this regime, we must have that there are at least two high internal frequencies and we have no Case $B$. Also note that $\sigma(k,\vec{k}) = ik$. Proceeding by cases:

\textbf{Case A}: We have $H_n\gtrsim (k^*)^2$, so it follows by condition $\eqref{Symmetric Condition for Hn}$ of Lemma \ref{GeneralLemma w Hn} with $s_0 = 1+\rho$ and $s_1 =  1$ that 
\[
\frac{|k|^{\rho}|ik|}{|k|^{2}} = |k|^{\rho - 1 } = O(1),
\]
for $\rho < 1$.

\textbf{Cases C, D:} We have that $(k^*_2k^*_3k^*_4)\gtrsim (k^*)^2, $ so condition $\eqref{Symmetric Condition wo Hn}$ of Lemma \ref{GeneralLemma wo Hn} with $s_0 = 1+\rho$ and $s_1 = 1$ gives:
\[
\frac{|k|^{1+\rho}|ik|}{|k^*|^{1-\varepsilon}|k^*_2k^*_2k^*_3|}\lesssim  \frac{|k|^{2+\rho}}{|k|^{3-\varepsilon}} \sim |k|^{\rho - 1+\varepsilon} = O(1).
\]
\end{proof}

\begin{lemma}
If $\rho < 1$, there is $\varepsilon > 0$ so that for $ 0 <T\ll 1$:
\begin{align*}
\|\mathcal{R}^2[\tilde{u}]\|_{Z^{1+\rho}_T}\lesssim_\varepsilon T^\varepsilon \left(\|\tilde{u}\|_{Y^1_T}^{n} + \|\tilde{u}\|^m_{Y^1_T}\right).
\end{align*}
\end{lemma}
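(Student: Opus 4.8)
The plan is to reduce the estimate entirely to the already-established multilinear machinery, namely Lemma \ref{GeneralLemma w Hn}, Lemma \ref{GeneralLemma wo Hn}, and their corollaries, exactly as in the proof of the preceding $\mathcal{HH}$ lemma. The key structural fact to exploit is that, by construction, $\mathcal{R}^2[\tilde u]$ carries at least two internal frequencies equal to the external frequency $k$; in particular at least three of the frequencies $k, k_1, \dots, k_n$ are comparable to $k$ (two equal $k$ and, by $\sum k_j = k$, the sum of the rest equals $-k$, forcing a third to be $\gtrsim k$ once we've ordered them). Consequently $\mathcal{R}^2$ never lives in the troublesome pure Case A scenario with a single dominant frequency: we always have $k_2^* k_3^* k_4^* \gtrsim (k^*)^2$, or better. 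The symbol is $\sigma(k,\vec k) = ik$, so $s_0 = 1+\rho$, $s_1 = 1$.

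First I would record the case decomposition from Proposition 2 restricted to the support of $\mathcal{R}^2$. Since we are in the resonant regime $k_{j_0} = k$ for (at least two) $j_0$, we are always in a situation where three frequencies are $\gtrsim k$ — this is Case B/C-type behavior, and Case D with $(k_3^*)^2 k_4^* \gtrsim (k^*)^2$ only strengthens things. Thus I would apply Lemma \ref{GeneralLemma wo Hn} with $s_0 = 1+\rho$, $s_1 = 1$: the hypothesis \eqref{Symmetric Condition wo Hn} to check is
\[
\frac{|k|^{1+\rho}\,|ik|}{(k^*)^{1-\delta}(k_2^* k_3^* k_4^*)^{1}} = O(1).
\]
Using $k_2^* k_3^* k_4^* \gtrsim (k^*)^2 \gtrsim k^2$ (and $k^* \gtrsim |k|$), the left side is $\lesssim |k|^{2+\rho}/|k|^{3-\delta} \sim |k|^{\rho - 1 + \delta}$, which is $O(1)$ for $\rho < 1$ and $\delta$ small enough. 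That disposes of all frequency configurations except possibly the genuinely low-frequency ones where $k^* \sim 1$, which are handled trivially since the symbol is then bounded and the operator is bounded on every space in sight. Then Corollary \ref{Homogenous Estimate} (the homogeneous version of Lemma \ref{GeneralLemma wo Hn}) upgrades the multilinear bound $\prod \|u_i\|_{Y^1_T}$ to $\|\tilde u\|_{Y^1_T}^n$, and summing the two nonlinear terms (degrees $n$ and $m$) gives the claimed
\[
\|\mathcal{R}^2[\tilde u]\|_{Z^{1+\rho}_T} \lesssim_\varepsilon T^\varepsilon\left(\|\tilde u\|_{Y^1_T}^n + \|\tilde u\|_{Y^1_T}^m\right).
\]

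I do not expect a serious obstacle here: $\mathcal{R}^2$ is the most benign of the nonlinear pieces precisely because the multiple forced resonances $k_{j_0} = k$ guarantee an abundance of large frequencies to absorb the $|k|^{1+\rho}$ weight, so the single derivative from $\sigma = ik$ is easily beaten. The only mild care needed is bookkeeping: after ordering $|k_1| \geq \cdots \geq |k_n|$ one should confirm that the two indices forced to equal $k$ are not simultaneously $k_3^*, k_4^*$ in a way that wastes the gain — but since two of them equal $k$ and a third is $\gtrsim k$ by the zero-sum constraint, we always have three of $k_1^*, k_2^*, k_3^*, k_4^*$ comparable to $k$, which is exactly what the computation above uses. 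For $n = 2,3$ the estimates are strictly better (one can invoke Lemma \ref{GeneralLemma w Hn} via Proposition 2 parts (1),(2), or simply note the symbol is $O(1)$ relative to the available frequencies), so the $n \geq 4$ case above is the essential one.
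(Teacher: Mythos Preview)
Your proposal is correct and follows essentially the paper's route: check the symbol hypothesis of Lemma~\ref{GeneralLemma wo Hn} with $s_0=1+\rho$, $s_1=1$, exploiting that the double resonance in $\mathcal{R}^2$ forces at least three internal frequencies $\gtrsim |k|$. One minor correction: your intermediate claim $k_2^* k_3^* k_4^* \gtrsim (k^*)^2$ can fail (take $n=4$, $k_1=k_2=k$, $k_3=N$, $k_4=-N-k$ with $N\gg k^2$), but only the weaker bound $k_2^* k_3^* k_4^* \gtrsim k^2$ is actually used in your displayed computation, and that one holds --- this is exactly what the paper invokes for Cases B, C, D; the paper also records a separate Case~A check via Lemma~\ref{GeneralLemma w Hn}, which, as you implicitly note, is redundant for $\mathcal{R}^2$ since Case~B is always satisfied.
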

\begin{proof}
$ $\newline
Note that $\sigma(k,\vec{k})= ik$. Proceeding by cases:

\textbf{Case A:} We have $H_n\gtrsim (k^*)^2$, so using the fact that we must have at least two high internal frequencies (larger than  $k$), we use condition $\eqref{Symmetric Condition for Hn}$ of Lemma \ref{GeneralLemma w Hn} with $s_0 = 1+\rho$, $s_1 = 1$ to get
\[
\frac{|k|^{\rho}|ik|}{|k|^{2}}\lesssim \frac{|k|^{2+\rho}}{|k|^{2}} = |k|^{\rho - 1} = O(1),
\]
which gives the desired $\rho \leq 1$.

\textbf{Cases B, C, D:} In all of these cases we necessarily have that $|k^*_2k^*_3k^*_4|\gtrsim k^2$, so that condition $\eqref{Symmetric Condition wo Hn}$ of Lemma \ref{GeneralLemma wo Hn} with $s_0 = 1+\rho$ and $s_1 = 1$ gives
\[
\frac{|k|^{1+\rho}|ik|}{|k^*|^{1-\varepsilon}|k^*_2k^*_2k^*_3|} \lesssim |k|^{\rho + \varepsilon - 1} = O(1),
\]
which is good for $\rho < 1$.
\end{proof}
\begin{lemma}
Let $\rho < 1$. There is then an $\varepsilon > 0$ so that for $0 < T\ll 1$:
\[
\|\mathcal{RE}[\tilde{u}]\|_{Z^{1+\rho}_T}\lesssim_\varepsilon T^\varepsilon \left(\|\tilde{u}\|_{Y^1_T}^n + \|\tilde{u}\|_{Y^1_T}^m\right).
\]
\end{lemma}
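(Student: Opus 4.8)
The plan is to decompose $\mathcal{RE}[\tilde u]$ according to the cases of the dispersion proposition and reduce each contribution to either Lemma~\ref{GeneralLemma w Hn} or Lemma~\ref{GeneralLemma wo Hn} with $s_0 = 1+\rho$ and $s_1 = 1$. Recall that $\mathcal{RE}$ was defined as what remains of $\mathcal{NR}[\tilde u]$ after subtracting the $\mathcal{HL}^n$ and $\mathcal{HL}^m$ terms; since $\mathcal{NR}$ already has no resonant (Case~B) interactions, and since $\mathcal{HL}$ carried precisely the Case~A restriction with one dominant frequency, $\mathcal{RE}$ consists of exactly those non-resonant interactions that either (i) satisfy $H_n \gtrsim (k^*)^2$ but have at least two comparably large internal frequencies (i.e.\ $k_1^* \not\gg k_2^*$), or (ii) violate $H_n \gtrsim (k^*)^2$, in which case the dispersion proposition forces Case~C or Case~D, i.e.\ $k_3^* \gtrsim k$ or $(k_3^*)^2 k_4^* \gtrsim (k^*)^2$. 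In all of these subcases we will have at least three internal frequencies whose product controls $k^2$, or a genuine $H_n$ gain; the symbol is $\sigma(k,\vec k) = ik$ throughout.

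First I would handle the case $H_n \gtrsim (k^*)^2$. Here Lemma~\ref{GeneralLemma w Hn} applies directly: condition \eqref{Symmetric Condition for Hn} with $s_0 = 1+\rho$, $s_1 = 1$ requires
\[
\frac{|k|^{\rho}\,|ik|}{(k^* k_2^*)} = O(1),
\]
and since $k^* \gtrsim |k|$ and (by the defining restriction of $\mathcal{RE}$ in this regime) $k_2^* \gtrsim k^*$ is not needed --- we only need $k^* k_2^* \gtrsim |k|$, which holds since $k^* \gtrsim |k| \geq 1$ --- we get $|k|^{\rho+1}/(k^* k_2^*) \lesssim |k|^{\rho - 1} = O(1)$ for $\rho < 1$. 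Next, for the remaining case where $H_n \gtrsim (k^*)^2$ fails, the dispersion proposition puts us in Case~C or Case~D, so $k_2^* k_3^* k_4^* \gtrsim (k^*)^2 \gtrsim k^2$ (using $(k_3^*)^2 k_4^* \gtrsim (k^*)^2$ in Case~D together with $k_2^* \geq k_3^*$). Then Lemma~\ref{GeneralLemma wo Hn} with $s_0 = 1+\rho$, $s_1 = 1$, and a small $\delta,\varepsilon > 0$ requires
\[
\frac{|k|^{1+\rho}\,|ik|}{(k^*)^{1-\varepsilon}\,(k_2^* k_3^* k_4^*)} \lesssim \frac{|k|^{2+\rho}}{|k|^{3-\varepsilon}} \sim |k|^{\rho - 1 + \varepsilon} = O(1),
\]
valid for $\rho < 1$ once $\varepsilon$ is chosen with $\rho + \varepsilon < 1$. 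Summation over the finitely many index orderings and over the two monomials $au^n$, $bu^m$ then yields the stated bound with a small power of $T$.

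The main point requiring care --- though it is essentially bookkeeping rather than a genuine obstacle --- is to verify that the decomposition really does exhaust $\mathcal{RE}$ by these cases, i.e.\ that every interaction in $\mathcal{RE}$ with a single dominant frequency $k_1^* \gg k_2^*$ automatically fails $H_n \gtrsim (k_1^*)^2$ (and hence lands in Case~C or~D), since otherwise it would have been absorbed into $\mathcal{HL}$; conversely, interactions with $k_1^* \sim k_2^*$ are fine in Case~A because two comparable large frequencies give $k^* k_2^* \gtrsim (k^*)^2 \gtrsim k^2$, which is more than enough. I expect this is the only place where one must be slightly attentive; the multilinear estimates themselves are immediate applications of the two general lemmas.
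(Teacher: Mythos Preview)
Your overall strategy is sound, and in fact your decomposition covers strictly more than the paper's proof of this lemma does. The paper, despite its stated definition of $\mathcal{RE}$, in practice treats $\mathcal{RE}$ as only the \emph{single-dominant, small $H_n$} piece: its proof simply asserts $H_n\ll (k^*)^2$ and $k_1\gg\max_{j\ge 2}|k_j|$, observes that this forces Case~D, and applies Lemma~\ref{GeneralLemma wo Hn}. The multi-high-frequency interactions (your case~(i)) are handled in a separate $\mathcal{HH}$ lemma. Your proof instead treats the full $\mathcal{RE}$ as defined, splitting on the size of $H_n$ rather than on the frequency configuration, which is perfectly legitimate and arguably cleaner given the definition.

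There is one genuine slip in your Case~A paragraph. You write that ``we only need $k^* k_2^* \gtrsim |k|$'' and then conclude $|k|^{\rho+1}/(k^* k_2^*)\lesssim |k|^{\rho-1}$; but $k^* k_2^*\gtrsim |k|$ only yields $|k|^{\rho}$, which is not bounded. What you actually need (and have) is $k^* k_2^*\gtrsim |k|^2$: since an interaction in $\mathcal{RE}$ with $H_n\gtrsim (k^*)^2$ was \emph{not} removed by $\mathcal{HL}$, it cannot have a single dominant frequency, so $k_2^*\sim k_1^*\gtrsim |k|$ and hence $k^*k_2^*\gtrsim |k|^2$. You say exactly this in your final paragraph, so the fix is just to correct the earlier sentence. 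A smaller imprecision: in Case~C you claim $k_2^*k_3^*k_4^*\gtrsim (k^*)^2$, but Case~C only guarantees $k_3^*\gtrsim |k|$, giving $k_2^*k_3^*k_4^*\gtrsim |k|^2$; this weaker bound is what your displayed inequality actually uses, so the computation is fine.
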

\begin{proof}
$ $\newline
The symbol is $\sigma(k,\vec{k}) = ik$. Considering only the degree $n$ non-linearity first, we have the restriction that $H_n\ll (k^*)^2$ and that, without loss of generality, $k_1\gg \max(|k_2|,\cdots, |k_n|).$ It follows that we must have Case D, and hence, by condition $\eqref{Symmetric Condition wo Hn}$ of Lemma \ref{GeneralLemma wo Hn} with $s_0 = 1+\rho$ and $s_1 = 1$:
\[
\frac{|k|^{1+\rho}|ik|}{|k^*|^{1-\varepsilon}|k^*_2k^*_2k^*_3|}\lesssim k^{\rho +\varepsilon - 1} = O(1),
\]
for $\rho < 1$.
\end{proof}

\begin{lemma}
If $\rho < 1$, then there exists $\varepsilon > 0$ such that for any $0 < T\ll 1$:
\[
\|T^{n}_{\mathcal{NF}}[u_0^\gamma,\tilde{f}, \tilde{u}, \cdots, \tilde{u}]\|_{Z^{1+\rho}_T}\lesssim T^\varepsilon\|u_0\|_{H^1}\|f\|_{H^{1}}\|\tilde{u}\|_{C^0_tH^1_x}^{n-2}.
\]
\end{lemma}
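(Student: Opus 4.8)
The plan is to exploit the full derivative of smoothing built into the normal-form symbol $\sigma(k,\vec k)=k/H_n(k_1,\dots,k_n)$, together with the one-dimensional Sobolev embedding $H^1(\mathbb{T})\hookrightarrow L^\infty(\mathbb{T})$, so that no Bourgain-space regularity is demanded of any input. This is essential: one \emph{cannot} feed this term through Lemma \ref{GeneralLemma w Hn}, since that lemma requires each input to lie in $Y^{s_1}_T$, whereas $\tilde f=\mathcal{F}^{-1}(L_t[\tilde u]\widehat f)$ is only in $C^0_tH^1_x$ and, because its time-Fourier transform concentrates near $\tau=0$, fails to lie in $X^{1,1/2}$ (the weight $\langle\tau-k^3\rangle^{1/2}$ would cost it roughly $3/2$ derivatives). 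The estimate therefore has to be done by hand, and the point is that the symbol is so favorable that this is nonetheless routine.

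First I would record the symbol bound. On the support of $T_{\mathcal{NF}}^n$ one has $k^*=|k_1|$ and $|k|\sim|k_1|$ (since $|k_1|\gg\max_{j\geq2}|k_j|$) and $H_n\gtrsim k_1^2>0$, so that since $k_j\in\mathbb{Z}^*$ and $\rho<1$,
\[
\langle k\rangle^{1+\rho}\,|\sigma(k,\vec k)|=\frac{\langle k\rangle^{1+\rho}|k|}{H_n}\ \lesssim\ \langle k_1\rangle^{\rho}\ \leq\ \langle k_1\rangle .
\]
Thus the full multiplier is dominated by a function of $k_1$ alone, and every derivative appearing in the $Z^{1+\rho}_T$ norm may be absorbed into the dominant input $u_0^\gamma=\chi(t/T)W_t^\gamma u_0$, turning it into $J^1u_0^\gamma$ at worst, with a factor $\langle k_1\rangle^{\rho-1}\lesssim 1$ to spare.

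Next I would reduce the $Z^{1+\rho}_T$ norm to an $X^{1+\rho,-1/2+\delta}$ estimate: the $X^{1+\rho,-1/2}$ part is immediate, and the $\ell^2_kL^1_\tau$ part follows by the same H\"older-in-$\tau$ device used in Lemmas \ref{GeneralLemma w Hn} and \ref{GeneralLemma wo Hn}, splitting off $\langle\tau-k^3\rangle^{-1/2-\delta}\in L^2_\tau$. Dualizing against $w$ with $\|w\|_{X^{-1-\rho,1/2-\delta}}\leq1$ and setting $\widehat z_k=\langle k\rangle^{-1-\rho}\langle\tau-k^3\rangle^{1/2-\delta}|\widehat w_k|$ (so $\|z\|_{L^2_{k,\tau}}\leq1$), applying the symbol bound above, discarding both the frequency restriction $\Omega_k$ and the harmless factor $\langle\tau-k^3\rangle^{-1/2+\delta}\leq1$, and then passing to physical space by Parseval (all Fourier transforms taken nonnegative), the problem reduces to the elementary H\"older estimate
\[
\int_{\mathbb{T}\times\mathbb{R}}|z|\;|J^1u_0^\gamma|\;|\tilde f|\;\prod_{r=3}^{n}|\tilde u|\,dx\,dt\ \leq\ \|z\|_{L^2_{x,t}}\,\|J^1u_0^\gamma\|_{L^2_{x,t}}\,\|\tilde f\|_{L^\infty_{x,t}}\,\prod_{r=3}^{n}\|\tilde u\|_{L^\infty_{x,t}} .
\]
Here $\|z\|_{L^2_{x,t}}\leq1$; because $L_t[\cdot]$ is a unimodular Fourier multiplier and $H^1(\mathbb{T})\hookrightarrow L^\infty(\mathbb{T})$, one has $\|\tilde f\|_{L^\infty_{x,t}}\lesssim\sup_t\|\tilde f(\cdot,t)\|_{H^1_x}=\|f\|_{H^1}$ and $\|\tilde u\|_{L^\infty_{x,t}}\lesssim\|\tilde u\|_{C^0_tH^1_x}$; and the time cutoff in $u_0^\gamma$ gives $\|J^1u_0^\gamma\|_{L^2_{x,t}}^2=\int\chi(t/T)^2e^{-2\gamma t}\|u_0\|_{H^1}^2\,dt\lesssim T\|u_0\|_{H^1}^2$ for $T\ll1$. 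Assembling these yields the claimed bound with, say, $\varepsilon=1/2$; a general polynomial nonlinearity merely generates finitely many terms of exactly this shape, handled identically.

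The only genuine subtlety is the one flagged at the outset: $\tilde f$ carries no usable Bourgain regularity and must be absorbed in $L^\infty_{x,t}$ through the one-dimensional Sobolev embedding, which forces the hand computation. Once the one-derivative gain of $k/H_n$ is used to free up all the regularity onto $u_0^\gamma$, nothing else in the argument is delicate.
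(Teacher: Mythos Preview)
Your proof is correct and follows essentially the same route as the paper: exploit $\sigma=k/H_n=O(1/k)$ on the support of $T^n_{\mathcal{NF}}$ to absorb all $1+\rho$ derivatives into the high-frequency input $u_0^\gamma$, place $\tilde f$ and the remaining $\tilde u$'s in $L^\infty_{x,t}$ via $H^1(\mathbb{T})\hookrightarrow L^\infty(\mathbb{T})$, and extract $T^{1/2}$ from the time cutoff on $u_0^\gamma$. The paper's version is terser---it bounds $\|\cdot\|_{Z^{1+\rho}_T}\lesssim T^{1/2}\|\cdot\|_{X^{1+\rho,0}}$ directly and then H\"older's in physical space---while you pass through duality and the $\ell^2_kL^1_\tau$ reduction, but the content is the same; your additional remark that $\tilde f\notin X^{1,1/2}$ (so Lemma~\ref{GeneralLemma w Hn} is unavailable) is a useful clarification absent from the paper's proof.
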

\begin{proof}
$ $\newline
Here our 
\[
\sigma(k,\vec{k}) = \frac{k}{H_n(k_1, \cdots, k_n)},
\]
and by construction we must have that $H_n(k_1, \cdots, k_n)\gtrsim (k_1^*)^2$, so that $\sigma(k,\vec{k}) = O(\frac{1}{k}).$ It follows that
\begin{align*}
    \|T^{n}_{\mathcal{NF}}[u_0^\gamma,\tilde{f}, \tilde{u}, \cdots, \tilde{u}]\|_{Z^{1+\rho}_T}&\lesssim T^{1/2}\|T^{n}_{\mathcal{NF}}[u_0^\gamma,\tilde{f}, \tilde{u}, \cdots, \tilde{u}]\|_{X^{s+\rho,0}}\\
    &\lesssim T^{1/2}\|u_0^\gamma\|_{X^{s+\rho-1,0}}\|\tilde{f}\|_{L^{\infty}_{x,t}}\|\tilde{u}\|_{L^{\infty}_{x,t}}^{n-2}.
\end{align*}
The result then follows by Sobolev embedding for $\rho\leq 1$.
\end{proof}

\begin{lemma}
If $\rho < 1$, then there exists $\varepsilon > 0$ such that for any $0 < T\ll 1$:
\begin{align*}
\|T^{n}_{\mathcal{NF}}[u_0^\gamma,\left(\mathcal{R}^2 + \mathcal{NR}\right)[\tilde{u}], \tilde{u}, \cdots, \tilde{u}]\|_{Z^{1+\rho}_T}\lesssim T^\varepsilon\|u_0\|_{H^1}\left(\|\tilde{u}\|_{Y^1_T}^{2n-2} + \|\tilde{u}\|_{Y^1_T}^{n+m-2}\right).
\end{align*}
\end{lemma}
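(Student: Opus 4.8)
The plan is to follow the template of the preceding lemmas—expand everything into a single multilinear operator, bound its symbol, and invoke Lemma~\ref{GeneralLemma wo Hn}—since the $X^{1+\rho,0}$ route used for $T^n_{\mathcal{NF}}[u_0^\gamma,\tilde f,\dots]$ does not obviously apply once the second input carries its own derivative. Treat the degree-$n$ piece of $(\mathcal{R}^2+\mathcal{NR})[\tilde u]$ first (the degree-$m$ piece is identical and produces the $\|\tilde u\|_{Y^1_T}^{n+m-2}$ term). Expanding this piece in Fourier, its output frequency is $\kappa:=\ell_1+\cdots+\ell_n$ and its symbol is $i\kappa$ (restricted to the $\mathcal{R}^2/\mathcal{NR}$ region, which only shrinks the support and may be dropped for an upper bound). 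Substituting into the second slot of $T^n_{\mathcal{NF}}$ realizes $T^n_{\mathcal{NF}}[u_0^\gamma,(\mathcal{R}^2+\mathcal{NR})[\tilde u],\tilde u,\dots,\tilde u]$ as a $(2n-1)$-linear operator $T^{2n-1}_{\sigma}(u_0^\gamma,\tilde u,\dots,\tilde u)$ with external frequency $k=k_1+\kappa+k_3+\cdots+k_n$ and symbol
\[
\sigma=\frac{i\,k\,\kappa}{H_n(k_1,\kappa,k_3,\dots,k_n)},
\]
supported where $k_1\gg\max(|\kappa|,|k_3|,\dots,|k_n|)$ and $H_n(k_1,\kappa,k_3,\dots,k_n)\gtrsim k_1^2$. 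On the estimate side the first factor is $\|u_0^\gamma\|_{Y^1_T}\lesssim\|u_0\|_{H^1}$ (the damped linear estimate) and each of the remaining $2n-2$ slots contributes one power of $\|\tilde u\|_{Y^1_T}$.

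Two observations about the support do all the work. First, $k_1\gg\max(|\kappa|,|k_3|,\dots,|k_n|)$ forces $|k|\sim|k_1|$, so $H_n\gtrsim k_1^2\sim k^2$ and hence $|\sigma|\lesssim|k||\kappa|/k_1^2\sim|\kappa|/|k|$. Second, writing $M=\max_j|\ell_j|$ and letting $k^*_j$ be the $j$-th largest of the $2n-1$ frequencies $|k_1|,|\ell_1|,\dots,|\ell_n|,|k_3|,\dots,|k_n|$, one has $|\kappa|\lesssim k^*_2$: if $M\le k^*_2$ then $|\kappa|\le nM\le nk^*_2$; while if $M>k^*_2$ then necessarily $M=k^*$, and since $|\kappa|\ll|k_1|\le k^*=M$ the partial sum $\kappa-\ell_{j_0}$ over the other internal frequencies (with $|\ell_{j_0}|=M$) has modulus $\ge M-|\kappa|\gtrsim M$, forcing a second internal frequency $|\ell_{j_1}|\gtrsim M$, whence $k^*_2\gtrsim M\gtrsim|\kappa|$.

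Combining these, for any $\delta\in(0,1-\rho)$ and all configurations in the support of $\sigma$,
\[
\frac{|k|^{1+\rho}|\sigma|}{(k^*)^{1-\delta}(k^*_2k^*_3k^*_4)}\lesssim\frac{|k|^{\rho}|\kappa|}{(k^*)^{1-\delta}\,k^*_2}\lesssim\frac{|k|^{\rho}}{(k^*)^{1-\delta}}\lesssim|k|^{\rho-1+\delta}=O(1),
\]
using $k^*\gtrsim|k|$ and $k^*_3,k^*_4\ge1$. This is precisely condition \eqref{Symmetric Condition wo Hn} with $s_0=1+\rho$ and $s_1=1$, so Lemma~\ref{GeneralLemma wo Hn} yields
\[
\|T^{2n-1}_{\sigma}(u_0^\gamma,\tilde u,\dots,\tilde u)\|_{Z^{1+\rho}_T}\lesssim_{\varepsilon}T^{\varepsilon}\|u_0^\gamma\|_{Y^1_T}\|\tilde u\|_{Y^1_T}^{2n-2}\lesssim T^{\varepsilon}\|u_0\|_{H^1}\|\tilde u\|_{Y^1_T}^{2n-2},
\]
and adding the degree-$m$ contribution gives the stated bound.

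The crux is the inequality $|\kappa|\lesssim k^*_2$. A priori the internal frequencies $\ell_1,\dots,\ell_n$ of the inner resonance non-linearity are not controlled by the normal-form restriction, which constrains $k_1$ only against the arguments of $H_n$—one of which is the \emph{sum} $\kappa$, not the individual $\ell_j$—so one must rule out a high--high interaction inside $(\mathcal{R}^2+\mathcal{NR})[\tilde u]$ that collapses to an output $\kappa$ much smaller than its internal frequencies with no second frequency of comparable size. The content of the second observation is exactly that this is impossible: a single large $\ell_{j_0}$ summing down to $|\kappa|\ll|k_1|$ must be balanced by another large $\ell_{j_1}$, which then furnishes $k^*_2$. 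Once this is in hand the remainder—the normal-form symbol bound $|\sigma|\lesssim|\kappa|/|k|$, the passage to a genuine $(2n-1)$-linear operator, and the bookkeeping of the restricted norms—is routine given Lemma~\ref{GeneralLemma wo Hn} and the linear bound for $u_0^\gamma$.
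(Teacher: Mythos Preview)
Your proof is correct and takes a genuinely different route from the paper. The paper expands into an $(n+m-1)$-linear operator and then runs the four-case decomposition (A, B, C, D) on it: Case~A is handled by Lemma~\ref{GeneralLemma w Hn} using $H_{n+m-1}\gtrsim (k^*)^2$, while Cases~B--D are handled separately by Lemma~\ref{GeneralLemma wo Hn} after checking that in each case enough secondary frequencies are large. You instead bypass the case split entirely by proving the single pointwise inequality $|\kappa|\lesssim k^*_2$ on the support of the symbol, which together with the normal-form bound $|\sigma|\lesssim|\kappa|/|k|$ lets you verify condition~\eqref{Symmetric Condition wo Hn} uniformly and invoke only Lemma~\ref{GeneralLemma wo Hn}. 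The gain is a shorter, case-free argument; the cost is that one has to isolate the observation that a single dominant inner frequency $\ell_{j_0}$ summing down to $|\kappa|\ll|k_1|$ forces a second inner frequency of comparable size, which the paper's mechanical case analysis never needs to articulate. Both approaches yield the same threshold $\rho<1$.
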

\begin{proof}
$ $\newline
We assume that $\mathcal{R}^2$ and $\mathcal{NR}$ contain only $\tilde{u}^m$ as a non-linearity, and let $s_0 = 1+\rho $ and $s_1 = 1$. We then have
\[
\sigma(k,\vec{k}) = \frac{k(k_2+\cdots+k_{m+1})}{H_n(k_1, k_2+\cdots+k_{m+1}, k_{m+2}, \cdots, k_{m+n-1})},
\]
where $|k_1| \gg \max\{|k_2 + \cdots + k_{m+1}|, |k_{m+2}|, \cdots, |k_{n+m-1}|\}$ and the denominator is $\gtrsim k_1^2.$ It follows from this that $\sigma(k,\vec{k}) = O(1).$

Now, proceeding case by case:

\textbf{Case A:} Here, we must have $H_{n+m-1}\gtrsim (k^*)^2$ and hence Lemma \ref{GeneralLemma w Hn} condition \eqref{Symmetric Condition for Hn} becomes
\[
\frac{|k|^{\rho}}{|k^*|}\lesssim |k|^{\rho-1} = O(1),
\]
for $\rho < 1$.

\textbf{Case B:} To be in Case $B$, we must either have $k_1 = k$ or $k = k_j$ for some $j\in \{2, \cdots, m+1\}.$ In the first case we must have that $k_j\gtrsim k_{2}+\cdots+k_{m+1}$ for some $j\not\in\{1, 2, \cdots, m+1\}$. Hence by the frequency restrictions on $T_\mathcal{NF}^{n}$ with $s_0 = 1+\rho, s_1= 1$, \eqref{Symmetric Condition wo Hn} becomes:
\[
\frac{|k|^{1+\rho}k(k_s+\cdots+k_{s+m-1})}{|k^*|^{1-\varepsilon}|k^*_2k_3^*k^*_4|\left(H_n(k_1, k_2+\cdots+k_{m+1}, k_{m+2}\cdots, k_{m+n-1})\right)} \lesssim \frac{|k|^{2+\rho}}{|k|^{3-\varepsilon}} \lesssim |k_1|^{\rho - 1 + \varepsilon} = O(1)
\]
if $\rho < 1$. This case then follows by Lemma \ref{GeneralLemma wo Hn}.

On the other hand, if we have a frequency in the $\{2, \cdots, m+1\}$ range that is equal to $k$, we must have another large frequency similar to $|k_1|$. We thus have three frequencies on the order of $|k|$ and another application of Lemma \ref{GeneralLemma wo Hn} gives:
\[
\frac{k^{1+\rho}}{|k^*|^{1+\varepsilon}|k^*_2k^*_3k^*_4|}\lesssim \frac{k^{1+\rho}}{|k|^{3+\varepsilon}}\lesssim |k|^{\rho + \varepsilon - 2} = O(1)
\]
for $\rho < 2$. 

\textbf{Cases C, D:} In both of these cases we must have $|k^*_2k^*_3k^*_4|\gtrsim |k^*|^2$, so we must have, by Lemma \ref{GeneralLemma wo Hn} and condition \eqref{Symmetric Condition wo Hn}:
\[
\frac{k^{1+\rho}}{|k^*|^{1+\varepsilon}|k^*_2k^*_3k^*_4|}\lesssim \frac{k^{1+\rho}}{|k|^{3+\varepsilon}}\lesssim |k|^{\rho + \varepsilon - 2} = O(1),
\]
for $\rho < 2.$ 
\end{proof}

\begin{lemma}
Let $\rho < 1$. Then there exists $\varepsilon > 0$ such that for any $0 < T\ll 1:$
\[
\|\mathcal{HL}^{n}[T_\mathcal{NF}^{m}[v_0^\gamma, v, w],v, w]\|_{Z^{1+\rho}_T}\lesssim_\varepsilon T^\varepsilon \|v_0\|_{H^1}\|v\|_{Y^1_T}^{\ell_1+\ell_2-2}.
\]
\end{lemma}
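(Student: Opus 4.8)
The operator in the statement is a composition: an outer high--low multiplier $\mathcal{HL}^n$, whose symbol is $ik$ and which by definition carries the Case A restriction together with a single dominant input, applied with its first slot filled by the inner normal-form multiplier $T_{\mathcal{NF}}^m$, whose symbol $k/H_m$ likewise comes with a Case A restriction and a single dominant input. I would write $\ell_1,\dots,\ell_m$ for the inner frequencies (with $\ell_1$ that of $v_0^\gamma$), $p=\ell_1+\cdots+\ell_m$ for the output of $T_{\mathcal{NF}}^m$ --- i.e.\ the first input of $\mathcal{HL}^n$ --- and $k_2,\dots,k_n$ for the remaining inputs of $\mathcal{HL}^n$, so $k=p+k_2+\cdots+k_n$. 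By the summation restrictions built into the two operators, $|\ell_1|\gg\max_{j\ge2}|\ell_j|$ with $\ell_2+\cdots+\ell_m\ne0$ and $|H_m(\ell_1,\dots,\ell_m)|\gtrsim\ell_1^2$, and $|p|\gg\max_{j\ge2}|k_j|$ with $k_2+\cdots+k_n\ne0$ and $|H_n(p,k_2,\dots,k_n)|\gtrsim p^2$; hence $|\ell_1|\sim|p|\sim|k|$, the frequency $\ell_1$ of $v_0^\gamma$ is the largest of all $n+m-1$ inputs, and the combined symbol satisfies $|\sigma_{\mathrm{comb}}|=|k|\,|p|\,|H_m|^{-1}\lesssim1$. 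As usual the interesting range is $n+m-1\ge4$.

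The organizing idea is the one behind Corollary~\ref{Large Term Estiamte}: the outer $\mathcal{HL}^n$ has a single large first frequency and lives in Case A, so the $\langle k\rangle^{1+\rho}$ weight may be charged to that first slot, and it remains to control $T_{\mathcal{NF}}^m[v_0^\gamma,v,\dots]$ at regularity $1+\rho$. Since $T_{\mathcal{NF}}^m$ gains a full derivative ($|\sigma|=O(1/|\ell_1|)$ on its support), one has $\langle k\rangle^{1+\rho}|\sigma_{\mathrm{comb}}|\lesssim\langle\ell_1\rangle^{3+\rho}|H_m|^{-1}\lesssim\langle\ell_1\rangle^{1+\rho}$; assigning $\langle\ell_1\rangle$ to $v_0^\gamma$ (being $\chi(t/T)W^\gamma_t v_0$, it is controlled in $Y^1$ by $\|v_0\|_{H^1}$, as recorded in the Preliminaries) leaves a deficit of $\langle\ell_1\rangle^\rho$ with $\rho<1$.

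Rigorously, I would treat the whole expression as a single $(n+m-1)$-linear operator and run the duality/Plancherel argument of Lemmas~\ref{GeneralLemma w Hn} and \ref{GeneralLemma wo Hn}: split $\|\cdot\|_{Z^{1+\rho}_T}$ into its $X^{1+\rho,-1/2}_T$ part and, via H\"older in $\tau$ and \eqref{l2L1 trick}, reduce the $\ell^2_kL^1_\tau$ part to an $X^{1+\rho,-1/2+\delta}_T$ estimate of the same shape. To absorb the deficit $\langle\ell_1\rangle^\rho$, use the outer dispersion: from $|H_n(p,k_2,\dots,k_n)|\gtrsim\ell_1^2$ and the identity $H_n=(\tau_W-p^3)+\sum_{j\ge2}(\tau_j-k_j^3)-(\tau-k^3)$, where $\tau_W=\tau_{\ell_1}+\cdots+\tau_{\ell_m}$ is the modulation variable of the $T_{\mathcal{NF}}^m$-output, at least one of $\langle\tau-k^3\rangle$, $\langle\tau_j-k_j^3\rangle$ $(j\ge2)$, $\langle\tau_W-p^3\rangle$ is $\gtrsim\langle\ell_1\rangle^2$, and since $\rho/2<1/2$ its square root more than covers $\langle\ell_1\rangle^\rho$ with room to spare. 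If the large modulation is $\langle\tau-k^3\rangle$ or some $\langle\tau_j-k_j^3\rangle$ we are in a variant of Case A1/A2 of Lemma~\ref{GeneralLemma w Hn} and conclude by Plancherel, \eqref{TaoEstiamte} (or the $L^4$ bound $X^{0,1/3}\hookrightarrow L^4$), and time localization for the $T^\varepsilon$. If instead $\langle\tau_W-p^3\rangle\gtrsim\langle\ell_1\rangle^2$, I unfold $\tau_W-p^3=\sum_{j=1}^m(\tau_{\ell_j}-\ell_j^3)-H_m$: either some inner modulation $\langle\tau_{\ell_j}-\ell_j^3\rangle\gtrsim\langle\ell_1\rangle^2$ --- and its $\rho/2$-power absorbs $\langle\ell_1\rangle^\rho$, the $j$-th inner factor being $v_0^\gamma$ or one of the $v$'s, each in $X^{1,1/2}$ and $\rho/2<1/2$, so a positive modulation power is left for \eqref{TaoEstiamte} --- or $\langle H_m\rangle\gtrsim\langle\tau_W-p^3\rangle\gtrsim\langle\ell_1\rangle^2$, in which case the spare factor $\langle H_m\rangle^{\rho/2}|H_m|^{-1}\lesssim|H_m|^{\rho/2-1}\lesssim\langle\ell_1\rangle^{\rho-2}$ coming from the inner symbol already dominates $\langle\ell_1\rangle^\rho$; in both sub-cases one finishes as before. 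Summing over orderings gives the claim.

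\textbf{Where the difficulty lies.} Everything outside the subcase $\langle\tau_W-p^3\rangle\gtrsim\langle\ell_1\rangle^2$ is essentially Corollary~\ref{Large Term Estiamte}. The real work is that nested subcase, where $T_{\mathcal{NF}}^m[\cdot]$ cannot be used as a black box: its internal resonance function and its internal symbol $1/H_m$ must be opened up simultaneously, and one must take care not to spend the same $\tfrac12$-power of an inner modulation twice --- once to kill $\langle\ell_1\rangle^\rho$ and once to feed \eqref{TaoEstiamte}. This is exactly why $\rho<1$ (so $\rho/2<1/2$) is what is needed, and why the non-resonance conditions $\ell_2+\cdots+\ell_m\ne0$ and $k_2+\cdots+k_n\ne0$ matter: they are what place each layer in Case A to begin with, guaranteeing $|H_m|,|H_n|\gtrsim\ell_1^2$ rather than these quantities being allowed to vanish (note that the total $H_{n+m-1}=H_m+H_n$ can itself be small, so one genuinely must work with the two dispersions separately).
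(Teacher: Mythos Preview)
Your approach is genuinely different from the paper's, and considerably more elaborate. The paper does not introduce the intermediate modulation $\tau_W-p^3$ at all. It simply records the two built-in restrictions $H_m(\ell_1,\dots,\ell_m)\gtrsim \ell_1^2$ and $H_n(p,k_{m+1},\dots,k_{m+n-1})\gtrsim p^2\sim\ell_1^2$, uses the algebraic identity $H_{n+m-1}=H_m+H_n$, and concludes $H_{n+m-1}\gtrsim \ell_1^2$. This places the whole $(n+m-1)$-linear operator squarely in Case~A with $\sigma=O(1)$, so Lemma~\ref{GeneralLemma w Hn} applies in one stroke: the check $|k|^{\rho}/(k^*k_2^*)\lesssim|k|^{\rho-1}=O(1)$ for $\rho<1$ finishes. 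In the paper's words, Lemma~\ref{GeneralLemma w Hn} is ``insensitive'' to the resonance $k=\ell_1$, so no cancellation argument is needed.

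Your final paragraph is exactly where your route diverges, and it is also where it does not close. You correctly observe that if one reads the Case~A restrictions as $|H_m|,|H_n|\gtrsim\ell_1^2$, then $|H_{n+m-1}|=|H_m+H_n|$ need not be large (take $\ell_2+\cdots+\ell_m$ and $k_{m+1}+\cdots+k_{m+n-1}$ of opposite sign; then $H_m$ and $H_n$ cancel). But your proposed fix for this bad subcase fails: when $\langle\tau_W-p^3\rangle\gtrsim\ell_1^2$ and every inner modulation is $\ll\ell_1^2$, you only get $|H_m|\sim|\tau_W-p^3|\gtrsim\ell_1^2$, which is the a~priori restriction and yields no ``spare factor''. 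The $|H_m|^{-1}$ in the symbol is already spent making $|\sigma_{\mathrm{comb}}|=|k||p|/|H_m|\lesssim1$; the quantity $\langle\tau_W-p^3\rangle$ is not attached to any input and cannot be cashed in as an $X^{s,b}$ weight. Concretely, for $m=n=2$ with $\ell_2=1$, $k_3=-1$ one has $k=\ell_1$, $H_3=0$, and the combined symbol equals $1/(3\ell_2)=O(1)$, so the $\langle k\rangle^{1+\rho}$ weight leaves a genuine deficit $\langle\ell_1\rangle^{\rho}$ that none of the real modulations (all $\ll\ell_1^2$ in this subcase) can absorb. Thus your nested scheme, as written, does not handle precisely the case you flag as the hard one; the paper's route sidesteps it by the addition step $H_{n+m-1}=H_m+H_n\gtrsim \ell_1^2$.
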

\begin{proof}
$ $\newline
The symbol for this estimate is 
\[
\sigma = \frac{k(k_1+\cdots + k_m)}{H_m(k_1, \cdots, k_m)},
\]
where we have the restrictions
\begin{align}
k_1&\gg\max\{|k_2|, \cdots, |k_m|\},\\
k_1+\cdots+k_m&\gg \max\{|k_{m+1}|, \cdots, |k_{m+n-1}|\},\\
(k_1+\cdots k_m)^2 &\lesssim H_n(k_1+\cdots +k_m, k_{m+1}, \cdots, k_{m+n-1}),\mbox{ and}\\
k_1^2&\lesssim H_m(k_1,\cdots, k_m).
\end{align}
From the above, it follows that 
\begin{align}
    k_1^2 &\lesssim H_n(k_1+\cdots +k_m, k_{m+1}, \cdots, k_{m+n-1})\mbox{ and}\\
    k_1&\gg\max\{|k_2|, \cdots, |k_{n+m-1}|\}. 
\end{align}
By adding together the restrictions on $H_n$ and $H_m$ we find
\begin{align*}
k_1^2&\lesssim H_{n+m-1}(k_1, \cdots, k_{n+m-1})\\
\sigma &= O(1).
\end{align*}
As $H_{m+n-1}\gtrsim k_1^2$, this enables us to apply Lemma 2 Condition \eqref{Symmetric Condition for Hn} with $s_0 = s+\rho$ and $s_1 = 1$, so that we have:
\[
\frac{|k|^{\rho}}{|k^*|}\lesssim \frac{|k|^{\rho}}{|k|}= |k|^{\rho - 1} = O(1),
\]
for $\rho \leq 1$.
\end{proof}
\begin{remark}
In the prior lemma we concluded that $H_{m+n-1}\gtrsim k_1^2$, which does not preclude the possibility of resonance. However, Lemma \ref{GeneralLemma w Hn} is insensitive to this, allowing us to avoid appealing to cancellation properties as in \cite{oh2020smoothing}.
\end{remark}
\begin{lemma}
If $\rho \in\mathbb{R}$, then there exists $\varepsilon > 0$ such that for $0 < T\ll 1$:
\[
\|\mathcal{HL}^{n}[v, \tilde{u}, \cdots, \tilde{u}]\|_{Z^{1+\rho}_T} \lesssim_\varepsilon T^{\varepsilon}\|v\|_{Y^{1+\rho}_T}\|\tilde{u}\|_{Y^1_T}^{n-1}.
\]
\end{lemma}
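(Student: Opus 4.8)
The plan is to recognize this estimate as a direct instance of Corollary~\ref{Large Term Estiamte}; the proof is therefore short, with essentially all of the work being bookkeeping about the frequency support.

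First I would unwind the definition of $\mathcal{HL}^n$. On the support of its symbol the restriction $|k_1|\gg\max_{j\ne1}|k_j|$ together with $k=k_1+\cdots+k_n$ forces $|k|\sim|k_1|=k^*$ (triangle inequality in both directions). Hence the built-in restriction $H_n\gtrsim k_1^2$ upgrades to $H_n\gtrsim k^2$, so we are in the regime required by Lemma~\ref{GeneralLemma w Hn}, i.e.\ this is a ``Case $A$'' term; moreover the extra hypothesis $k_1\gtrsim k^*$ of Corollary~\ref{Large Term Estiamte} holds automatically. The point of putting $v$ in the distinguished slot is exactly that $k_1$ is the largest frequency, which is what will let us place the full $1+\rho$ derivatives on $v$ rather than split regularity among the inputs.

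Next I would check the symbol bound. Here $\sigma(k,\vec k)=ik$, so taking $s_0=1+\rho$ and $s_1=1$ (Corollary~\ref{Large Term Estiamte} imposes no restriction on $s_0$, which is why arbitrary real $\rho$ is allowed, and $s_1=1>1/2$) the modified condition reads
\[
\frac{|\sigma(k,\vec k)|}{|k|\,(k^*_2)^{s_1}}=\frac{1}{k^*_2}=O(1),
\]
since $k^*_2\ge1$. Applying Corollary~\ref{Large Term Estiamte} to $\mathcal{HL}^{n}[v,\tilde u,\cdots,\tilde u]$, with $v$ in the first argument and $\tilde u$ in the remaining $n-1$ slots, produces precisely
\[
\|\mathcal{HL}^{n}[v,\tilde u,\cdots,\tilde u]\|_{Z^{1+\rho}_T}\lesssim_{\varepsilon,n}T^{\varepsilon}\|v\|_{Y^{1+\rho}_T}\,\|\tilde u\|_{Y^1_T}^{\,n-1}.
\]

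There is no genuine obstacle here: the only step requiring any thought is the reduction $H_n\gtrsim k_1^2\Rightarrow H_n\gtrsim k^2$, i.e.\ verifying that the high--low structure of $\mathcal{HL}$ makes $k^*\sim|k|$ so that the support hypothesis of Lemma~\ref{GeneralLemma w Hn} (which is phrased in terms of $k^2$) is met, together with the observation that the ``$k_1$ largest'' restriction is exactly what licenses extracting all of the $1+\rho$ regularity from $v$.
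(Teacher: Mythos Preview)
Your proof is correct and follows the same approach as the paper: both identify $\sigma(k,\vec k)=ik$, note that the definition of $\mathcal{HL}^n$ places us in Case~A with $k_1\gtrsim k^*$, and then invoke Corollary~\ref{Large Term Estiamte} with $s_0=1+\rho$, $s_1=1$. If anything, your write-up is a bit more careful in checking the hypotheses (explicitly noting $|k|\sim|k_1|=k^*$ so that $H_n\gtrsim k_1^2$ gives $H_n\gtrsim k^2$, and verifying the modified symbol condition in the exact form stated in the corollary).
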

\begin{proof}
$ $\newline
Here, our $\sigma(k,\vec{k}) = ik$ and by definition we satisfy the condition of Case A. Taking $s_0 = 1+\rho$ and $s_1 = 1$, we apply Corollary \ref{Large Term Estiamte} with Condition \eqref{Symmetric Condition for Hn} taking the form:
\[
\frac{|k|^{\rho}|ik|}{|k^*|^{1+\rho}} = O(1).
\]

\end{proof}

\begin{proof}\textit{Proposition \ref{Smoothing}}
$ $\newline
Let $T$ be chosen to satisfy Proposition $\ref{SmootherfTheorem}$. Using Duhamel and applying the lemmas and remarks in this section we conclude the result. Specifically, we find
\begin{align*}
\|v\|_{Y_T^{1+\rho}}&\lesssim_{\gamma, \varepsilon, \rho} T^\varepsilon\|v\| _{Z^{1+\rho}_T}C_1(\|f\|_{H^1_x}, \|\tilde{u}\|_{Y^1_T}) + C_2(\|f\|_{H^1_x}, \|\tilde{u}\|_{Y^1_T})\\
&\qquad\qquad+ \|F\|_{H^{1+\rho}_x}\left(\|\tilde{u}\|_{Y^1_T}^{n-1} + \|\tilde{u}\|_{Y^1_T}^{m-1}\right)\\
&\lesssim T^\varepsilon\|v\| _{Z^{1+\rho}_T}C_1(\|f\|_{H^1_x}, \|\tilde{u}\|_{Y^1_T}) + C_3(\|f\|_{H^1_x}, \|\tilde{u}\|_{Y^1_T}).
\end{align*}

We then choose $T = T(\|f\|_{H^1}, \rho, \gamma, \|u_0\|_{H^1})$ smaller than the local well-posedness size and small enough so that $T^\varepsilon C_1(\|f\|_{H^1_x}, \|\tilde{u}\|_{Y^1_T})\ll 1$, and apply the triangle inequality and the inequality at the end of Proposition $\ref{SmootherfTheorem}$, to conclude
\[
\|v\|_{Y^{1+\rho}_T} \leq C(\|f\|_{H^1}, \rho, \gamma, \|u_0\|_{H^1}),
\]
for $0 < t < T$.
Using the definition of $z$ given at $(\ref{v smooth definition})$, the triangle inequality, and the $C^0_tH^{1+\rho}_x$ embedding, we obtain:
\begin{align*}
\|\tilde{u} - W_t^\gamma u(x,0)\|_{C^0_tH^{1+\rho}_x}&\lesssim \|v\|_{Y^{1+\rho}_T} + \|T_\mathcal{NF}^n[\tilde{u}, \cdots, \tilde{u}]\|_{C^0_tH^{1+\rho}_x}\\
&\qquad\qquad+ \|T_\mathcal{NF}^m[\tilde{u}, \cdots, \tilde{u}]\|_{C^0_tH^{1+\rho}_x} + \|f\|_{H^1}\\
&\lesssim C(\|f\|_{H^1}, \rho, \gamma, \|u_0\|_{H^1}),
\end{align*}
as desired.

When we have control of the $H^1$ norm of $u_0$, then this extends globally by noting that the above arguments imply (by iteration) that for $t\in[NT, (N+1)T)$ and $\rho < 1:$
\[
\|\tilde{u}(x,t)-W_{t-NT}^\gamma \tilde{u}(x,NT)\|_{H^{1+\rho}_x}\lesssim C(\|f\|_{H^1},\rho,\gamma, \|u_0\|_{H^1}).
\]
We then apply a simple telescoping argument from, e.g., Theorem 4.3 of \cite{erdougan2016dispersive}.
\end{proof}

%
%

\section*{Proof of Theorem 1}

\begin{proof}[Proof of Theorem \ref{Theorem3}]
$ $\newline
We follow a general strategy presented in \cite{erdogan2011long} for proving the existence of a global attractor given a smoothing statement akin to Proposition \ref{Smoothing} and suitable compact embeddings. Let $u_0$ be mean zero. In light of Lemma \ref{AbsorbingSet}, we see that solutions to $(\ref{AttractorPDEoNR})$ with $g$ described in Theorem \ref{Theorem3} can be extended globally in time. It follows that the results of Proposition \ref{Smoothing} can, as well. 

Now, let $0 < \rho < 1,$ $S_t$ be the data-to-solution map at time $t$, $B$ the absorbing set (guaranteed by Remark $\ref{AbsorbingSet}$), $u_0\in B$, and $S_tu_0 = L_{t}^{-1}[S_tu_0]W^\gamma_tu_0+ N_tu_0$. It's immediate that, uniformly on $B$, \begin{equation}\label{uniformDecay}
    \|L_{t}^{-1}[S_tu_0]W^\gamma_tu_0\|_{H^1} = \|W^\gamma_t u_0\|_{H^1} \lesssim e^{-\gamma t}\to 0.
\end{equation} Furthermore, by Proposition \ref{Smoothing} and Lemma \ref{AbsorbingSet} we see that 
\begin{align*}
    \|N_tu_0\|_{H^{1+\rho}} & = \left\|\left(L_{t}[S_tu_0]S_t - W_t^\gamma\right) u_0\right\|_{H^{1+\rho}}\\
    &\leq C(\gamma, \rho, \|f\|_{H^{1}}),
\end{align*}
and hence, by Rellich's Theorem, $\{N_tu_0\,:\,t > 0\}$ is pre-compact in $H^{1}$. It follows that $S_t$ is asymptotically compact, ensuring the existence of a global attractor, A (For more information, see \cite{temam2012infinite}).
To prove the final claim we let $0 < \rho < 1$, note that \[A = \omega(B) = \bigcap_{\tau} \overline{\bigcup_{t > \tau} S_t B}:= \bigcap_\tau U_\tau,\] and define $B_\rho$ to be the ball of radius $C(\gamma, \rho, \|f\|_{H^{1}})$ in $H^{\rho/2+3/2}$. By our proof of the existence of an absorbing set, there is a time $T = T(\gamma, \|u_0\|_{H^1}, \|f\|_{H^1})$ so that for $t > T$,
\begin{align*}
\|S_tu_0 - W_{t-T}^\gamma L_{t}^{-1}[S_tu_0]L_T[S_tu_0]S_Tu_0\|_{H^{1+\rho}(\mathbb{T})}&= \|\tilde{u}(t) - W_{t-T}^\gamma \tilde{u}(T)\|_{H^{1+\rho}(\mathbb{T})}\\
&\leq C(\rho, \gamma, \|f\|_{H^1(\mathbb{T})}),
\end{align*}
with 
\begin{align*}
\|W_{t-T}^\gamma L_{t}^{-1}[S_tu_0]&L_T[S_tu_0]S_Tu_0\|_{H^1(\mathbb{T})} = e^{-(t-T)\gamma}\|S_Tu_0\|_{H^1(\mathbb{T})}\\
&\leq e^{-(t-T)\gamma}C(\gamma, \|f\|_{H^{1}(\mathbb{T})})\overset{t\to\infty}{\to} 0.
\end{align*}
Now, by Proposition $\ref{Smoothing}$ and iteration, at any time $t > T$ guaranteed by the prior sentence, we see that \[\|u\|_{H^{1+\rho}}\lesssim C(\gamma, \rho, \|f\|_{H^1}).\] Hence, for $\tau > T$, $U_\tau \subset B_\rho + B(\delta_\tau, 0)$ in $H^1$, where $\delta_\tau \to 0$ as $\tau\to \infty$. Since $B_\rho$ is compact in $H^1$, it follows that $A\subset B_\rho$, a compact set in $H^{1+\rho}$ by Rellich's theorem.
\end{proof}

\subsection{Acknowledgements}
The author would like to thank Professor Burak Erdo{\u{g}}an for not only recommending this problem and participating in many helpful discussions, but also for putting up with my \textit{terrible} questions. His endless patience was a crucial ingredient.
\bibliographystyle{abbrv}
\bibliography{Attractor.bib, gKdV.bib, GeneralDispersivePDE.bib, NLS.bib}
\email{ryanm12@illinois.edu}
\end{document}